\documentclass[11pt]{amsart}

\usepackage{xypic,amsmath,amssymb}

\setlength{\parindent}{0pt}
\newtheorem{proposition}{Proposition}[section]
\newtheorem{lemma}[proposition]{Lemma}

\newtheorem{theorem}[proposition]{Theorem}

\theoremstyle{definition}
\newtheorem{definition}[proposition]{Definition}
\newtheorem{example}[proposition]{Example}

\theoremstyle{remark}
\newtheorem{remark}[proposition]{Remark}

\newcommand{\thlabel}[1]{\label{th:#1}}
\newcommand{\thref}[1]{Theorem~\ref{th:#1}}
\newcommand{\selabel}[1]{\label{se:#1}}
\newcommand{\seref}[1]{Section~\ref{se:#1}}
\newcommand{\lelabel}[1]{\label{le:#1}}
\newcommand{\leref}[1]{Lemma~\ref{le:#1}}
\newcommand{\prlabel}[1]{\label{pr:#1}}
\newcommand{\prref}[1]{Proposition~\ref{pr:#1}}

\newcommand{\relabel}[1]{\label{re:#1}}
\newcommand{\reref}[1]{Remark~\ref{re:#1}}
\newcommand{\exlabel}[1]{\label{ex:#1}}
\newcommand{\exref}[1]{Example~\ref{ex:#1}}
\newcommand{\delabel}[1]{\label{de:#1}}

\newcommand{\eqlabel}[1]{\label{eq:#1}}
\newcommand{\equref}[1]{(\ref{eq:#1})}

\def\mapright#1{\smash{\mathop{\longrightarrow}\limits^{#1}}}

\def\doublerightbis#1#2{{\lower.2ex\vbox{
\hbox{${\smash{\mathop{\longrightarrow}\limits^{#1}}}$}\vspace*{-4mm}
\hbox{${\smash{\mathop{\longrightarrow}\limits_{#2}}}$}}}}

\def\equal#1{\smash{\mathop{=}\limits^{#1}}}
\def\congo#1{\smash{\mathop{\cong}\limits^{#1}}}

\newcommand{\can}{{\rm can}}

\newcommand{\Hom}{{\rm Hom}}

\newcommand{\End}{{\rm End}}

\newcommand{\Ker}{{\rm Ker}}

\newcommand{\im}{{\rm Im}}

\def\Desc{\underline{\rm Desc}}
\def\Cog{\underline{\Cc}}

\def\lan{\langle}
\def\ran{\rangle}
\def\ot{\otimes}

\def\rightact{\hbox{$\leftharpoonup$}}
\def\leftact{\hbox{$\rightharpoonup$}}

\newcommand{\Aa}{\mathcal{A}}
\newcommand{\Bb}{\mathcal{B}}
\newcommand{\Cc}{\mathcal{C}}
\newcommand{\Dd}{\mathcal{D}}

\newcommand{\Mm}{\mathcal{M}}

\newcommand{\Vv}{\mathcal{V}}

\def\*C{{}^*\hspace*{-1pt}{\Cc}}

\def\text#1{{\rm {\rm #1}}}

\def\ol{\overline}
\def\ul{\underline}

\allowdisplaybreaks[4]


\begin{document}
\title[Descent and Galois theory]{Descent and Galois theory for Hopf categories}
\author{S. Caenepeel}
\address{Faculty of Engineering,
Vrije Universiteit Brussel, Pleinlaan 2, B-1050 Brussels, Belgium}
\email{scaenepe@vub.ac.be}
\urladdr{http://homepages.vub.ac.be/\~{}scaenepe/}
\author{T. Fieremans}
\address{Faculty of Engineering,
Vrije Universiteit Brussel, Pleinlaan 2, B-1050 Brussels, Belgium}
\email{tfierema@vub.ac.be}
\urladdr{http://homepages.vub.ac.be/\~{}tfierema/}

\subjclass[2010]{16T05}

\keywords{Enriched category, Hopf category, Descent theory, Hopf-Galois extension}

\begin{abstract}
Descent theory for linear categories is developed. Given a linear category as an extension
of a diagonal category, we introduce descent data, and the category of descent data is
isomorphic to the category of representations of the diagonal category, if some flatness
assumptions are satisfied. Then Hopf-Galois descent theory for linear Hopf categories,
the Hopf algebra version of a linear category, is developed. This leads to the notion
of Hopf-Galois category extension. We have a dual theory, where actions by dual linear
Hopf categories on linear categories are considered.  Hopf-Galois category extensions
over groupoid algebras correspond to strongly graded linear categories.
\end{abstract}
\maketitle

\section*{Introduction}\selabel{0}
A $k$-linear category is a category enriched in the monoidal category of vector spaces $\Mm_k$.
It is a generalization of a $k$-algebra in the sense that a $k$-linear category with one object
is simply a $k$-algebra. Thus we can regard a $k$-linear category as a multi-object version
of a $k$-algebra. This philosophy was further examined in \cite{BCV}, leading to multi-object
versions of bialgebras and Hopf algebras, respectively termed $k$-linear semi-Hopf categories
and $k$-linear Hopf categories. It turns out that several classical properties of Hopf algebras
can be generalized to Hopf categories, see \cite{BCV} for some examples. One of the results
in \cite{BCV} is the fundamental theorem for Hopf modules, opening the way to Hopf-Galois theory.
The main aim of this paper is to develop Hopf-Galois theory for Hopf categories.\\
Hopf-Galois objects were introduced by Chase and Sweedler \cite{CS}, and was generalized
by Kreimer and Takeuchi \cite{KT}. One of the important properties is the Fundamental Theorem,
which can be interpreted as Hopf-Galois descent, and can be stated as follows: if $A$ is an $H$-comodule
algebra, with coinvariant subalgebra $B$, then there is an adjunction between $B$-modules and
relative Hopf modules, which is a pair of inverse equivalences if $A$ is a Hopf-Galois extension of $B$,
which is faithfully flat as a left $B$-module. From the point of view of descent theory: an $A$-module can
be descended to a $B$-module if it has the additional structure of a relative Hopf module, in other words,
the relative Hopf modules become the Hopf-Galois descent data. An elegant formulation of the theory
was given by Brzezi\'nski in \cite{Brzezinski02}, based on the theory of corings. In this formalism,
classical descent data (as introduced in \cite{Gr} for schemes, in \cite{KO} for extensions of commutative
rings, and in \cite{Cipolla} for extensions of non-commutative rings) as well as Hopf-Galois descent data
become comodules over certain corings. A more detailed account of this approach is presented in the
survey paper \cite{C}, which is at the basis of the methods developed in this paper, with one important
drawback, namely the fact that, as far as we could figure it out, the formulation in terms of corings is 
not working in the setting of Hopf categories. However, the general philosophy survives, and enables us
to formulate faithfully flat descent and Hopf-Galois theory for Hopf categories.\\
The line-up of the paper is as follows. Preliminary results from \cite{BCV} are given in \seref{1}.
In \seref{2}, we present faithfully flat descent theory for linear categories. In the classical theory, both the
base $B$ and the extension $A$ are algebras, connected by an algebra morphism. In our setting, $B$ and
$A$ are linear categories, connected by a so-called extension. But now $B$ is a diagonal category, meaning
that $B_{xy}=0$ if $x\neq y$.  Another difference, already mentioned above, is that the descent data
cannot be interpreted as comodules over a coring. In \seref{3}, we generalize the notions of comodule algebra
and its coinvariants
and relative Hopf module. The strategy to develop Hopf-Galois descent is now the following. There is a
functor from descent data to relative Hopf modules, see \prref{3.4}. A Hopf category is called an $H$-Galois
category extension of its coinvariants if a collection of canonical maps is invertible; some equivalent conditions
are given in \thref{3.5} and in this case descent data and relative Hopf modules are isomorphic categories,
leading to the desired descent theory if some flatness conditions are satisfied. In the classical case, an
alternative description of descent data is possible if $A$ is finitely generated projective as a $B$-module:
it is the category of modules over ${}_B\End(A)$. This is generalized to the categorical situation in \seref{5}.
It turns out that we need clusters of $k$-linear categories, these are collections of $k$-linear categories
indexed by $X$, see \seref{4}. Some duality results are discussed in Sections \ref{se:6} and \ref{se:7}. In the final
\seref{8}, we focus on Galois category extensions over a Hopf category induced by a groupoid, and link our
results to the work of Lundstr\"om \cite{L}, generalizing an old result of Ulbrich \cite{U} that a Hopf-Galois
extension over a group algebra is a strongly graded ring. Another classical result, already observed in
\cite{CS}, is that classical Galois extensions, where a finite group $G$ acts on $A$, are precisely Hopf-Galois
extensions over the dual of the group ring $(kG)^*$. Although we have a duality theory, see Sections \ref{se:6} and \ref{se:7},
this result cannot be generalized in a satisfactory way at the moment, we refer to the final remark \reref{8.3}
for full explanation. This will be the topic of a forthcoming paper.
\pagebreak

\section{Preliminary results}\selabel{1}
\subsection{$k$-linear categories}\selabel{1.1}
Let $\Vv$ be a monoidal category. From \cite[Sec. 6.2]{B}, we recall the notion of
$\Vv$-category. In particular, in the case where $\Vv=\Mm_k$, the category of vector spaces
over a field $k$ (or, more generally, the category of modules over a commutative ring $k$),
a $\Vv$-category is a $k$-linear category. In \cite{BCV}, the notions of $\Cc(\Vv)$-category and
Hopf $\Vv$-category are introduced. In this paper, we will work over $\Vv=\Mm_k$
and over $\Vv=\Mm_k^{\rm op}$. A Hopf $\Mm_k$-category is called a $k$-linear
Hopf category, while a Hopf $\Mm_k^{\rm op}$-category is called a dual $k$-linear
Hopf category. Let us
specify the definitions from \cite{BCV} to this particular situation.\\
Let $A$ be a $k$-linear category, and let $X$ be the class of objects in $A$. For $x,y\in A$, we
write $A_{xy}$ for the $k$-module of morphisms from $y$ to $x$. For all $x,y,z\in X$, we then
have the composition maps
$m_{xyz}:\ A_{xy}\ot A_{yz}\to A_{xz}$, $m_{xyz}(a\ot b)=ab$, for all $a\in A_{xy}$ and
$b\in A_{yz}$. The unit element of $A_{xx}$ is denoted by $1_x$.\\
Let $A$ and $A'$ be $k$-linear categories with the same underlying class of objects $X$.
A $k$-linear functor $f:\ A\to B$ that is the identity on $X$ is called a $k$-linear $X$-functor:
for all $x,y\in X$, $f_{xy}:\ A_{xy}\to B_{xy}$ is a $k$-linear map preserving multiplication and
unit.\\
For a class $X$, we introduce the category $\Mm_k(X)$.  An object is a family of objects $M$ in $\Mm_k$ indexed by $X\times X$:
$$M=(M_{xy})_{x,y\in X}.$$
A morphism $\varphi:\ M\to N$ consists of a family of $k$-linear maps $\varphi_{xy}:\ M_{xy}\to N_{xy}$
indexed by $X\times X$.\\
Let $A$ be a $k$-linear category. A right $A$-module is an object $M$ in $\Mm_k(X)$ together with a family of $k$-linear maps
$$\psi=
\psi_{xyz}:\ M_{xy}\ot A_{yz}\to M_{xz},~~\psi_{xyz}(m\ot a)=ma$$
such that the following associativity and unit conditions hold:
$(ma)b=m(ab)$; $m1_y=m$,
for all $m\in M_{xy}$, $a\in A_{yz}$ and $b\in A_{zu}$.\\
Let $M$ and $N$ be right $A$-modules. A morphism $\varphi:\ M\to N$ in $\Mm_k(X)$ is called right $A$-linear
if
$\varphi_{xz}(ma)=\varphi_{xy}(m)a$, for all $m\in M_{xy}$ and $a\in A_{yz}$. The category of right $A$-modules
and right $A$-linear morphisms is denoted by $\Mm_k(X)_A$.\\
We will also need the category $\Dd_k(X)$. Objects are families of $k$-modules
$N=(N_x)_{x\in X}$ indexed by $X$, and a morphism $N\to N'$ consists of a
family of $k$-linear maps $N_x\to N'_x$. $\Dd_k(X)$ is a symmetric monoidal category, and
an algebra $B$ in $\Dd_k(X)$ consists of a family of $k$-algebras $B=(B_x)_{x\in X}$ indexed by $X$.
We can consider $B$ as a $k$-linear category: $B_{xy}=\{0\}$ if $x\neq y$ and $B_{xx}=B_x$. $B$ is then called
a diagonal $k$-linear category. A diagonal right $B$-module is an object $N\in \Dd_k(X)$ such that every
$N_x$ is a right $B_x$-module. $\Dd_k(X)_B$ is the category of diagonal $B$-modules and right $B$-linear morphisms.
A morphism $g$ in $\Dd_x$ between right $B$-modules is right $B$-linear if every $g_x$ is right $B_x$-linear.
The category of left $B$-modules is defined in a similar way.

\subsection{Finitely generated projective modules and dual basis}\selabel{1.2}
Let $B$ be a $k$-algebra, and assume that $M$ is a finitely generated projective left $B$-module. Then
$M^*={}_B\Hom(M,B)$ is a finitely generated projective right $B$-module, with action given by the
formula
$(m^*\cdot b)(m)=m^*(m)b$,
for all $m\in M$, $m^*\in M^*$ and $b\in B$. $M$ has a finite dual basis $\sum_i e_i^*\ot_B e_i\in M^*\ot_B M$
satisfying the formulas
\begin{equation}\eqlabel{1.2.1}
\sum_i e_i^*(m) e_i= m~~~{\rm and}~~~\sum_i e_i^*m^*(e_i)=m^*,
\end{equation}
for all $m\in M$ and $m^*\in M^*$. For $N\in {}_B\Mm$ and $P\in \Mm_{B}$, we have isomorphisms
\begin{equation}\eqlabel{1.2.2}
{}_B\Hom(M,N)\cong M^*\ot_B N~~~{\rm and}~~~\Hom_B(M^*,P)\cong P\ot_B M.
\end{equation}
For later use, we provide the explicit description of $\alpha:\ P\ot_B M\to \Hom_B(M^*,P)$
and its inverse. For $p\in P$, $m\in M$, $m^*\in M^*$ and $f\in \Hom_B(M^*,P)$, we have
\begin{equation}\eqlabel{1.2.3}
\alpha(p\ot_B m)(m^*)=pm^*(m)~~{\rm and}~~\alpha^{-1}(f)=\sum_i f(e_i^*)\ot_B e_i.
\end{equation}
A left $B$-progenerator (in the literature also termed as a faithfully projective left $B$-module)
is a finitely generated projective left $B$-module that is also a generator, that is,
${\rm Tr}(M)=\{\sum_i m_i^*(m_i)~|~m_i\in M, ~m^*_i\in M^*\}=B$. A finitely generated projective
module is flat, and a progenerator is faithfully flat.\\
Let $B$ be a diagonal $k$-linear category. We can view $B$ as a $k$-linear category, see \seref{1.1}.
Consider a left $B$-module $M$. We introduce the following terminology.\\
$M$ is called locally flat, resp. locally finite as a left $B$-module if every $M_{xy}$ is flat, resp.
finitely generated projective as a left $B_x$-module.\\
A locally flat left $B$-module $M$ is called locally faithfully flat if every $M_{xx}$ is
faitfhully flat as a left $B_x$-module; A locally finite left $B$-module $M$ is called locally faithfully projective
 if every $M_{xx}$ is faitfhully flat as a left $B_x$-module.

\subsection{Hopf categories}\selabel{1.3}
The category $\Cog(\Mm_k)$ of $k$-coalgebras is a monoidal category, so we can
consider $\Cog(\Mm_k)$-categories. It is shown in \cite{BCV} that a $\Cog(\Mm_k)$-category
is a $k$-linear category $H$ with the following additonal structure: for all $x,y\in X$, $H_{xy}$ is a $k$-coalgebra
with structure maps $\Delta_{xy}$ and $\varepsilon_{xy}$ such that the following properties
hold, for all $h\in H_{xy}$ and $k\in H_{yz}$:
\begin{eqnarray*}
&&\Delta_{xz}(hk)=h_{(1)}k_{(1)}\ot h_{(2)}k_{(2)}~~~;~~~
\Delta_{xx}(1_x)=1_x\ot 1_x;\\
&&\varepsilon_{xz}(hk)=\varepsilon_{xy}(h)\varepsilon_{yz}(k)~~~;~~~
\varepsilon_{xx}(1_x)=1.
\end{eqnarray*}
A $\Cog(\Mm_k)$-category with one object is a bialgebra; an obvious name for $\Cog(\Mm_k)$-categories
in general therefore seems to be ``$k$-linear bicategories''. However, this terminology is badly chosen,
because of possible confusion with the existing notions of 2-categories and bicategories. This is
why we introduce the name ``$k$-linear semi-Hopf categories'' for $\Cog(\Mm_k)$-categories.\\
A $k$-linear Hopf category is a $\Cog(\Mm_k)$-category together with $k$-linear maps
$S_{xy}:\ H_{xy}\to H_{yx}$ such that
\begin{equation}\eqlabel{1.1.1}
h_{(1)}S_{xy}(h_{(2)})=\varepsilon_{xy}(h)1_x~~;~~S_{xy}(h_{(1)})h_{(2)}=\varepsilon_{xy}(h)1_y,
\end{equation}
for all $x,y\in X$ and $h\in H_{xy}$.\\

The same construction can be performed in the opposite category of vector spaces, leading
to the following notions.
A dual $k$-linear semi-Hopf category $K$ consists of a an object $K\in \Mm_k(X)$, together with
comultiplication and counit maps
$\Delta_{xyz}:\ K_{xz}\to K_{xy}\ot K_{yz}$ and $\varepsilon_x:\ K_{xx}\to k$,
satisfying the obvious coassociativity and counit properties. We adopt the Sweedler notation
$\Delta_{xyz}(k)=k_{(1,x,y)}\ot k_{(2,y,z)}$,
for $k\in K_{xz}$. Furthermore, every $K_{xy}$ is a $k$-algebra, with unit $1_{xy}$. The
compatibility relations between the two structures are the following, for
$h,k\in K_{xz}$ and $l,m\in K_x$:
\begin{eqnarray*}
&&\Delta_{xyz}(hk)= h_{(1,x,y)}k_{(1,x,y)}\ot h_{(2,y,z)}k_{(2,y,z)}~~~;~~~
\Delta_{xyz}(1_{xz})= 1_{xy}\ot 1_{yz};\\
&&\varepsilon_x(lm)=\varepsilon_x(l)\varepsilon_x(m)~~~;~~~
\varepsilon_x(1_{xx})=1.
\end{eqnarray*}
A dual $k$-linear Hopf category is a dual $k$-linear semi-Hopf category $K$ with an antipode $T$
consisting of a family of maps $T_{xy}:\ K_{yx}\to K_{xy}$ satisfying the following
equations, for all $l\in K_{xx}$ and $x\in X$:
$$l_{(1,x,y)}T_{xy}(l_{(2,y,x)})=\varepsilon_x(l)1_{xy}~~{\rm and}~~
T_{yx}(l_{(1,x,y)})l_{(2,y,x)}=\varepsilon_x(l)1_{yx}.$$
From \cite[Theorem 5.6]{BCV}, we recall that there is a duality between the categories of locally finite (semi-)Hopf categories and locally finite dual (semi-)Hopf categories.\\
For later use, we give the explicit description of the dual $k$-linear Hopf category $K$ corresponding to
a locally finite
$k$-linear Hopf category $H$. As an object
of $\Mm_k(X)$, $K$ is given componentwise as
$K_{xy}=H^*_{yx}$.
The multiplication on $K_{xy}$ is given by opposite convolution:
$$\lan kl,h\ran=\lan k, h_{(2)}\ran \lan l, h_{(1)}\ran,$$
for all $k,l\in K_{xy}$ and $h\in H_{yx}$. The unit of $K_{xy}$ is $1_{xy}=\varepsilon_{yx}$. The comultiplication maps
$$\Delta_{xyz}:\ K_{xz}\to K_{xy}\ot K_{yz},~~\Delta_{xyz}(k)= k_{(1,x,y)}\ot k_{(2,y,z)}$$
are characterized by the formulas
$$\lan k_{(1,x,y))}, h'\ran \lan k_{(2,y,z))}, h\ran=\lan k, hh'\ran,$$
for all $h\in H_{zy}$ and $h'\in H_{yx}$. The counit maps $\varepsilon_x:\ K_{xx}\to k$ are given by
$\varepsilon_x(k)=\lan k,1_x\ran$. The antipode maps are $T_{yx}=S_{xy}^*:\ K_{xy}=H^*_{yx}\to K_{yx}=H^*_{xy}$.

\section{Descent theory for $k$-linear categories}\selabel{2}
Let $A$ and $B$ be $k$-linear categories, with underlying class $X$, and assume that $B$ is diagonal.
Let $i:\ B\to A$ be a $k$-linear $X$-functor. Then $B$ consists of a family of $k$-algebras indexed by $X$,
and for every $x\in X$, we have a $k$-algebra morphism $i_x:\ B_x\to A_{xx}$.

\begin{definition}\delabel{2.1}
A descent datum $(M,\sigma)$ for the functor $i$ consists of a right $A$-module $M$ together with a
family of $k$-linear maps $\sigma=(\sigma_{xy})_{x,y\in X}$, where
$$\sigma_{xy}:\ M_{xy}\to M_{xx}\ot_{B_x} A_{xy}.$$
We use the following Sweedler-type notation: $\sigma_{xy}(m)=m_{<0>}\ot_{B_x} m_{<1>}$, for
$m\in M_{xy}$. The following conditions have to be satisfied, for all $m\in M_{xy}$ and $a\in A_{yz}$:
\begin{eqnarray}
\sigma_{xz}(ma)&=&m_{<0>}\ot_{B_x} m_{<1>}a;\eqlabel{2.1.1}\\
\sigma_{xx}(m_{<0>})\ot_{B_x} m_{<1>}&=&m_{<0>}\ot_{B_x} 1_x\ot_{B_x} m_{<1>};\eqlabel{2.1.2}\\
m_{<0>}m_{<1>}&=&m.\eqlabel{2.1.3}
\end{eqnarray}
A morphism between two descent data $(M,\sigma)$ and $(M',\sigma')$ is a morphism $f:\ M\to M'$
in $\Mm_k(X)_A$ such that
\begin{equation}\eqlabel{2.1.4}
f_{xx}(m_{<0>})\ot_{B_x}m_{<1>}=\sigma'_{xy}(f_{xy}(m)),
\end{equation}
for all $m\in M_{xy}$. The category of descent data is denoted $\Desc_B(A)$.
\end{definition}

\begin{proposition}\prlabel{2.2}
Let $i:\ B\to A$ be a $k$-linear $X$-functor.
We have an adjoint pair of functors $(F,G)$ between the categories $\Dd_k(X)_B$
and $\Desc_B(A)$.
\end{proposition}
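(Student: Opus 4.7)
The plan is to define $F$ as an induction functor and $G$ as a ``coinvariants'' functor, and then to verify the adjunction by tracing through \equref{2.1.4}.

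For $F:\ \Dd_k(X)_B\to \Desc_B(A)$, I would send $N=(N_x)_{x\in X}$ to the right $A$-module with $F(N)_{xy} = N_x\ot_{B_x} A_{xy}$, where $A_{xy}$ is viewed as a left $B_x$-module via $i_x:\ B_x\to A_{xx}$. The right $A$-action is multiplication on the right factor, and the descent datum is
\[
\sigma_{xy}(n\ot_{B_x} a) = (n\ot_{B_x} 1_x)\ot_{B_x} a.
\]
Axioms \equref{2.1.1}, \equref{2.1.2}, and \equref{2.1.3} for $(F(N),\sigma)$ are all immediate from the definitions; \equref{2.1.2} in particular is just the associativity of inserting a fresh $1_x$.

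For $G:\ \Desc_B(A)\to \Dd_k(X)_B$, I would set
\[
G(M,\sigma)_x = \{m\in M_{xx}\mid \sigma_{xx}(m) = m\ot_{B_x} 1_x\}.
\]
The right $B_x$-module structure on $M_{xx}$ induced by $i_x$ restricts to $G(M,\sigma)_x$: for such an $m$ and any $b\in B_x$, axiom \equref{2.1.1} gives $\sigma_{xx}(m\cdot b) = m\ot_{B_x} i_x(b)$, and the balancedness identity $m\ot_{B_x} i_x(b) = (m\cdot b)\ot_{B_x} 1_x$ in $M_{xx}\ot_{B_x}A_{xx}$ then places $m\cdot b$ back in $G(M,\sigma)_x$.

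The adjunction $F\dashv G$ is then obtained by exhibiting mutually inverse bijections between $\Hom_{\Desc_B(A)}(F(N),(M,\sigma))$ and $\Hom_{\Dd_k(X)_B}(N,G(M,\sigma))$. Given $f:\ F(N)\to(M,\sigma)$, right $A$-linearity forces $f_{xy}(n\ot_{B_x} a) = g_x(n)\cdot a$ where $g_x(n):=f_{xx}(n\ot_{B_x} 1_x)$; applying \equref{2.1.4} to $n\ot_{B_x} 1_x$ and using the explicit form of $\sigma$ on $F(N)$ shows $g_x(n)\in G(M,\sigma)_x$, and $B_x$-linearity of $g_x$ follows from $A_{xx}$-linearity of $f_{xx}$ along $i_x$. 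Conversely, a morphism $g:\ N\to G(M,\sigma)$ in $\Dd_k(X)_B$ determines $f_{xy}(n\ot_{B_x} a):=g_x(n)\cdot a$; well-definedness over $B_x$ uses $B_x$-linearity of $g_x$, $A$-linearity is built in, and \equref{2.1.4} reduces to both sides equalling $g_x(n)\ot_{B_x} a$ once one uses that $g_x(n)$ is a coinvariant. Naturality and mutual inverseness are formal.

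The only non-trivial step is book-keeping: one has to invoke the balancedness identity $m\ot_{B_x} i_x(b) = (m\cdot b)\ot_{B_x} 1_x$ at the right moments (both in checking $B_x$-stability of $G(M,\sigma)_x$ and when setting up the counit $g_x(n)\cdot a$), and one must distinguish the descent target $(N_x\ot_{B_x} A_{xx})\ot_{B_x}A_{xy}$ from the ambiguously-written triple tensor product. No idea beyond the classical descent picture is required; the novelty is only that objects are indexed by $X\times X$ instead of being single modules.
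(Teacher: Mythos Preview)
Your proposal is correct and follows essentially the same approach as the paper: the functors $F$ and $G$ are defined identically, and the check that $G(M,\sigma)_x$ is $B_x$-stable is the same computation. The only cosmetic difference is that the paper records the adjunction via explicit unit $\eta^N_x(n)=n\ot_{B_x}1_x$ and counit $\varepsilon^M_{xy}(m\ot_{B_x}a)=ma$, whereas you verify the hom-set bijection directly; these are equivalent presentations of the same adjunction.
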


\begin{proof}
For $N\in \Dd_k(X)_B$, we define $F(N)\in \Desc_B(A)$ as follows: $F(N)_{xy}=N_x\ot_{B_x} A_{xy}$,
and $\sigma_{xy}:\ F(N)_{xy}=N_x\ot_{B_x} A_{xy}\to F(N)_{xx}\ot_{B_x} A_{xy}=N_x\ot_{B_x} A_{xx}\ot_{B_x} A_{xy}$
is given by the formula
$$\sigma_{xy}(n\ot_{B_x} a)=n\ot_{B_x} 1_x\ot_{B_x} a.$$
Conditions \equref{2.1.1} and \equref{2.1.3} are obviously satisfied. We also compute easily that
\begin{eqnarray*}
&&\hspace*{-2cm}
\sigma_{xx}(n\ot_{B_x} 1_x)\ot_{B_x} a=n\ot_{B_x} 1_x\ot_{B_x} 1_x\ot_{B_x} a\\
&=& (n\ot_{B_x} a)_{<0>}\ot_{B_x} 1_x\ot_{B_x}(n\ot_{B_x} a)_{<1>},
\end{eqnarray*}
hence \equref{2.1.2} is also satisfied, and $F(N)$ is a descent datum. In particular, $F(B)=A$
is a descent datum, with $\sigma_{xy}:\ A_{xy}\to A_{xx}\ot_{B_x} A_{xy}$, $\sigma_{xy}(a)=1_x\ot_{B_x}a$.\\
At the level of morphisms, $F$ is defined as follows. Take $g:\ N\to N'$ in $\Dd_k(X)_B$. Then $F(g)=f:\
F(N)\to F(N')$ has $x,y$-component
$$f_{xy}=g_{xy}\ot_{B_x} A_{xy}:\ N_x\ot_{B_x} A_{xy}\to N'_x\ot_{B_x} A_{xy}.$$
Conversely, for $M\in \Desc_B(A)$, let $G(M)\in \Dd_k(X)$ be given by the formula
$$G(M)_x=\{m\in M_{xx}~|~\sigma_{xx}(m)=m\ot_{B_x} 1_x\}.$$
We claim that $G(M)\in \Dd_k(X)_B$, that is, $G(M)_x$ is a right $B_x$-module, for every $x\in X$.
Indeed, for every $m\in G(M)_x$ and $b\in B_x$, we have that $mb\in G(M)_x$ since
$\sigma_{xx}(mb)\equal{\equref{2.1.1}}m\ot_{B_x}1_x b=mb\ot_{B_x} 1_x$.
Observe that
$$G(A)_x=\{a\in A_{xx}~|~1_x\ot_{B_x}a=a\ot_{B_x}1_x\}.$$
$G(A)$ is a diagonal $k$-linear category: it is easy to show that every $G(A)_x$ is a $k$-algebra.
Also the algebra morphisms $i_x:\ B_x\to A_{xx}$ corestrict to $i_x:\ B_x\to G(A)_x$. Indeed,
for $b\in B_x$, we have that $i_x(b)=b1_x=1_xb$, and
$1_x\ot_{B_x} b1_x=b1_x\ot_{B_x} 1_x$, so that $i_x(b)\in G(A)_x$.\\
Let $f:\ (M,\sigma)\to (M',\sigma')$ be a morphism of descent data. For $m\in G(M)_x$, we have that
$$\sigma'_{xx}(f_{xx}(m))\equal{\equref{2.1.4}} f_{xx}(m_{<0>})\ot_{B_x}m_{<1>}=f_{xx}(m)\ot_{B_x} 1_x,$$
and $f_{xx}(m)\in G(M')_x$. We now define $G(f): G(M)\to G(M')$. $G(f)_x$ is the restriction 
and corestriction of $f_{xx}$ to $G(M)$ and $G(M')$.\\
\ul{Unit of the adjunction}. Let $N$ be a diagonal $B$-module. Then
\begin{eqnarray*}
&&\hspace*{-20mm}
GF(N)_x=
\{\sum_i n_i\ot_{B_x}a_i\in N_x\ot_{B_x} A_{xx}~|\\
&& 
\sum_i n_i\ot_{B_x} 1_x \ot_{B_x}a_i = \sum_i n_i\ot_{B_x}a_i\ot_{B_x} 1_x\}.
\end{eqnarray*}
$\eta^N:\ N\to GF(N)$ is now defined as follows:
$$\eta^N_x:\ N_x\to GF(N)_x\subset N_x\ot_{B_x} A_{xx},~~\eta^N_x(n)=n\ot_{B_x}1_x.$$
\ul{Counit of the adjunction}. Let $M$ be a descent datum. $FG(M)_{xy}=G(M)_x\ot_{B_x} A_{xy}$,
and $\varepsilon^M:\ FG(M)\to M$ is defined as follows:
$$\varepsilon^M_{xy}:\ G(M)_x\ot_{B_x} A_{xy}\to M_{xy},~~
\varepsilon^M_{xy}(m\ot_{B_x}a)=ma.$$
Verification of all the further details is left to the reader.
\end{proof}

\begin{proposition}\prlabel{2.3}
Let $i:\ B\to A$ be a $k$-linear $X$-functor. Take $x,y\in X$ and assume that $A_{xy}$ is flat as a left $B_x$-module.
Then the counit morphism $\varepsilon^M_{xy}$ from \prref{2.2} is bijective, for every descent datum $(M,\sigma)$.
\end{proposition}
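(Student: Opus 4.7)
The plan is to construct an explicit two-sided inverse to $\varepsilon^M_{xy}$ using the descent map $\sigma_{xy}$ itself. The role of the flatness hypothesis is to identify $G(M)_x\ot_{B_x}A_{xy}$ with a canonical subspace of $M_{xx}\ot_{B_x}A_{xy}$.

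First, I would observe that $G(M)_x$ is, by its very definition, the equalizer (equivalently, the kernel of the difference) of the two right $B_x$-linear maps
\begin{equation*}
\sigma_{xx},\ \iota:\ M_{xx}\to M_{xx}\ot_{B_x} A_{xx},
\end{equation*}
where $\iota(m)=m\ot_{B_x} 1_x$. Both maps are right $B_x$-linear via the right action on the second tensor factor (inherited from $A_{xx}$ via $i_x$), as was already used to equip $G(M)_x$ with a right $B_x$-structure in the proof of \prref{2.2}. Since $A_{xy}$ is flat as a left $B_x$-module, the functor $-\ot_{B_x}A_{xy}$ preserves this kernel, so $G(M)_x\ot_{B_x}A_{xy}$ embeds into $M_{xx}\ot_{B_x}A_{xy}$ as the equalizer of $(\sigma_{xx}-\iota)\ot\id_{A_{xy}}$.

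The key observation is then that condition \equref{2.1.2} asserts exactly that $\sigma_{xy}(m)\in M_{xx}\ot_{B_x}A_{xy}$ lies in this equalizer for every $m\in M_{xy}$. Consequently, $\sigma_{xy}$ corestricts to a well-defined $k$-linear map
\begin{equation*}
\bar\sigma_{xy}:\ M_{xy}\to G(M)_x\ot_{B_x} A_{xy},
\end{equation*}
which is the candidate inverse of $\varepsilon^M_{xy}$.

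Both composition checks are then short. The identity $\varepsilon^M_{xy}\circ\bar\sigma_{xy}=\id_{M_{xy}}$ is exactly the content of \equref{2.1.3}. Conversely, on an element $\sum_i m_i\ot_{B_x} a_i$ with $m_i\in G(M)_x$, the image under $\varepsilon^M_{xy}$ is $\sum_i m_ia_i\in M_{xy}$, and applying $\sigma_{xy}$ then combines \equref{2.1.1} with the defining property $\sigma_{xx}(m_i)=m_i\ot_{B_x}1_x$ of $G(M)_x$ to return $\sum_i m_i\ot_{B_x}a_i$. No step is a real obstacle; the one conceptual point to highlight is the invocation of flatness to pass an equalizer through a tensor product, which is precisely why the hypothesis on $A_{xy}$ enters.
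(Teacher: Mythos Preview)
Your proposal is correct and follows essentially the same approach as the paper: identify $G(M)_x$ as the equalizer of $\sigma_{xx}$ and $m\mapsto m\ot_{B_x}1_x$, use flatness of $A_{xy}$ to preserve this equalizer under $-\ot_{B_x}A_{xy}$, invoke \equref{2.1.2} to see that $\sigma_{xy}$ corestricts to $G(M)_x\ot_{B_x}A_{xy}$, and then verify that this corestriction inverts $\varepsilon^M_{xy}$ via \equref{2.1.3} and \equref{2.1.1}. The only cosmetic difference is notation ($\iota$ versus the paper's $i^M_x$, $\bar\sigma_{xy}$ versus writing $\sigma_{xy}$ again for the corestriction).
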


\begin{proof}
Consider the map
$$i^M_x=M_{xx}\ot_{B_x}i_x:\ M_{xx}\to M_{xx}\ot_{B_x} A_{xx},~~
i^M_x(m)=m\ot_{B_x}1_x.$$
Then we have an exact sequence
$$0~\to~ G(M)_x~\mapright{\subset} ~M_{xx}
~\doublerightbis{\sigma_{xx}}{i^M_x}~
M_{xx}\ot_{B_x} A_{xx}.$$
By assumption, $A_{xy}$ is flat as a left $B_x$-module, hence the sequence
$$0~\to~ G(M)_x\ot_{B_x} A_{xy}~\mapright{\subset} ~M_{xx}\ot_{B_x} A_{xy}
~\doublerightbis{\sigma_{xx}\ot_{B_x} A_{xy}}{i^M_x\ot_{B_x} A_{xy}}~
M_{xx}\ot_{B_x} A_{xx}\ot_{B_x} A_{xy}$$
is exact. Take $m\in M_{xy}$. Then $\sigma_{xy}(m)=m_{<0>}\ot_{B_x} m_{<1>}\in M_{xx}\ot_{B_x} A_{xy}$
and
\begin{eqnarray*}
&&\hspace*{-2cm}
(\sigma_{xx}\ot_{B_x} A_{xy})(\sigma_{xy}(m))=
\sigma_{xx}(m_{<0>})\ot_{B_x} m_{<1>}\\
&\equal{\equref{2.1.2}}&m_{<0>}\ot_{B_x} 1_x \ot_{B_x} m_{<1>}=
(i^M_x\ot_{B_x} A_{xy})(\sigma_{xy}(m)).
\end{eqnarray*}
It follows that $\sigma_{xy}(m)\in G(M)_x\ot_{B_x} A_{xy}$, so $\sigma_{xy}$ corestricts to
$$\sigma_{xy}:\ M_{xy}\to G(M)_x\ot_{B_x} A_{xy}.$$
We now show that this map is the inverse of $\varepsilon^M_{xy}$. For all $m\in M_{xy}$, we have that
$$(\varepsilon^M_{xy}\circ \sigma_{xy})(m)=m_{<0>}m_{<1>}\equal{\equref{2.1.3}}m.$$
For $m\in G(M)_x$ and $a\in A_{xy}$, we easily calculate that
$$(\sigma_{xy}\circ \varepsilon^M_{xy})(m\ot_{B_x} a)=\sigma_{xy}(ma)\equal{\equref{2.1.1}}
m_{<0>}\ot_{B_x} m_{<1>}a=m\ot_{B_x} a.$$
\end{proof}

\begin{proposition}\prlabel{2.4}
Let $i:\ B\to A$ be a $k$-linear $X$-functor. Take $x\in X$ and assume that $A_{xx}$ is faithfully flat as a left $B_x$-module. Then $\eta^N_x$ is bijective for every $N \in \Dd_{k}(X)_{B}$.
\end{proposition}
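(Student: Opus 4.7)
The plan is to recognize $\eta^N_x$ as the augmentation of an Amitsur-type descent sequence and verify its exactness by faithfully flat base change. First, I would observe from the proof of \prref{2.2} that $GF(N)_x$ is precisely the equalizer of the two maps $\sigma_{xx}, i^{F(N)}_x:\ N_x \ot_{B_x} A_{xx} \to N_x \ot_{B_x} A_{xx} \ot_{B_x} A_{xx}$, where $\sigma_{xx}(n\ot_{B_x} a)=n\ot_{B_x} 1_x \ot_{B_x} a$ and $i^{F(N)}_x(n\ot_{B_x} a)=n\ot_{B_x} a \ot_{B_x} 1_x$. Consequently, bijectivity of $\eta^N_x$ is equivalent to exactness at both spots of the Amitsur-type sequence
\[
0 \to N_x \mapright{\eta^N_x} N_x \ot_{B_x} A_{xx} \mapright{\sigma_{xx}-i^{F(N)}_x} N_x \ot_{B_x} A_{xx} \ot_{B_x} A_{xx}.
\]

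Next, since $A_{xx}$ is faithfully flat as a left $B_x$-module, the functor $-\ot_{B_x} A_{xx}$ on the category of right $B_x$-modules is exact and reflects exactness. It therefore suffices to prove that the sequence obtained by appending one further tensor factor $\ot_{B_x} A_{xx}$ on the right to each term is exact. For this I would build a contracting homotopy from the multiplication $\mu:\ A_{xx} \ot_{B_x} A_{xx} \to A_{xx}$, which is well-defined and $B_x$-balanced because $i_x:\ B_x \to A_{xx}$ is an algebra map. Explicitly, the maps
\[
s(n\ot_{B_x} a\ot_{B_x} b) = n\ot_{B_x} ab, \quad t(n\ot_{B_x} a\ot_{B_x} b\ot_{B_x} c) = n\ot_{B_x} a\ot_{B_x} bc
\]
will satisfy $s\circ(\eta^N_x \ot A_{xx}) = \id$, showing that the base-changed augmentation is split injective, together with an identity of the shape $(\eta^N_x \ot A_{xx})\circ s - t\circ ((\sigma_{xx}-i^{F(N)}_x)\ot A_{xx}) = \id$, which delivers exactness at the middle term.

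The main obstacle is pure bookkeeping: making sure the left/right conventions match throughout (recall $N_x$ is a right $B_x$-module while $A_{xx}$ is a $(B_x,B_x)$-bimodule via $i_x$, and the right $B_x$-action on the iterated tensor products is through the rightmost $A_{xx}$-factor), and that the two homotopy identities come out with the correct signs. Once these routine verifications are made, faithful flatness descends exactness of the base-changed sequence back to exactness of the original Amitsur sequence, which is precisely the bijectivity of $\eta^N_x$.
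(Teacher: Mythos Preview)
Your proposal is correct and follows essentially the same route as the paper: both identify $GF(N)_x$ as the equalizer of the two maps $f_1=\sigma_{xx}$ and $f_2=i^{F(N)}_x$, use faithful flatness of $A_{xx}$ to reduce to checking exactness after applying $-\ot_{B_x}A_{xx}$, and then verify this via the multiplication map (the paper multiplies the last two tensor factors directly, which is exactly your $s$ and $t$). Your packaging in terms of a contracting homotopy is a clean way to say the same thing; the identity you wrote indeed holds, since $t\circ(f_1\ot A_{xx})=(\eta^N_x\ot A_{xx})\circ s$ and $t\circ(f_2\ot A_{xx})=\id$.
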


\begin{proof}
Let $f_1,~f_2:\ N_x\ot_{B_x}A_{xx}\to N_x\ot_{B_x} A_{xx}\ot_{B_x} A_{xx}$ be defined by the formulas
$$f_1(n\ot_{B_x}a)=n\ot_{B_x}1_x\ot_{B_x}a~~;~~f_2(n\ot_{B_x}a)=n\ot_{B_x}a\ot_{B_x}1_x.$$
Since $\eta^N_x$ is the corestriction of $i^N_x$ to $GF(N)_x$, we have the following commutative diagram
$$\xymatrix{
0\ar[r]&N_x\ar[r]^(.4){i_x^N}\ar[d]^{\eta_x^N}&N_x\ot_{B_x}A_{xx}
\ar@<.5ex>[r]^(.38){f_1}\ar@<-.5ex>[r]_(.38){f_2}\ar[d]^{=}&N_x\ot_{B_x} A_{xx}\ot_{B_x} A_{xx}\ar[d]^{=}\\
0\ar[r]&GF(N)_x\ar[r]^(.4){\subset}&N_x\ot_{B_x}A_{xx}
\ar@<.5ex>[r]^(.38){f_1}\ar@<-.5ex>[r]_(.38){f_2}&N_x\ot_{B_x} A_{xx}\ot_{B_x} A_{xx}}$$
It follows from the definition of $G$ that the bottom row is exact. If we can show that the top row is exact, then
it will follow from the five lemma that  $\eta^N_x$ is an isomorphism. Since $A_{xx}$ is faithfully flat as a left
$B_x$-module, it suffices to show that the top row becomes exact after the functor $-\ot_{B_x}A_{xx,}$ is
applied to it.\\
Take $\alpha=\sum_i n_i\ot_{B_x} a_i\ot_{B_x} b_i\in N_x\ot_{B_x} A_{xx}\ot_{B_x} A_{xx}$, and assume
that
$(f_1\ot_{B_x} A_{xx})(\alpha)=(f_2\ot_{B_x} A_{xx})(\alpha)$,
that is,
$$\sum_i n_i\ot_{B_x} 1_x \ot_{B_x}a_i\ot_{B_x} b_i=\sum_i n_i\ot_{B_x} a_i\ot_{B_x}1_x\ot_{B_x} b_i.$$
Multiplying the third and the fourth tensor factor, we obtain that
$$(1_x^N\ot_{B_x}A_{xx})(\sum_i n_i\ot_{B_x} a_ib_i)=\sum_i n_i\ot_{B_x} 1_x \ot_{B_x}a_i b_i=\sum_i n_i\ot_{B_x} a_i\ot_{B_x} b_i=
\alpha,$$
so $\alpha\in \im(1_x^N\ot_{B_x}A_{xx})$, which is precisely what we need.
\end{proof}

As an immediate application of Propositions \ref{pr:2.2}-\ref{pr:2.4}, we obtain the following result, which can be viewed
as the faithfully flat descent theorem for $k$-linear categories. We would like to point out that \thref{2.5} can also be
derived from Beck's Theorem, see \cite[Sec. VI.7]{mclane}; we have prefered to present a direct proof.

\begin{theorem}\thlabel{2.5}
Let $i:\ B\to A$ be a $k$-linear $X$-functor. Assume that $A$ is locally flat as a left $B$-module. Then the following assertions are equivalent.
\begin{enumerate}
\item $A_{x,x}$ is faithfully flat as a left $B_{x}$-module for all $x \in X$;
\item the adjoint pair $(F,G)$
from \prref{2.2} is a pair of inverse equivalences, and the categories $\Dd_k(X)_B$
and $\Desc_B(A)$ are equivalent.
\end{enumerate}
\end{theorem}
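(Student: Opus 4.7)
The plan is to prove the two implications separately, leveraging Propositions~\ref{pr:2.2}, \ref{pr:2.3}, and \ref{pr:2.4}. For (1)~$\Rightarrow$~(2), local flatness of $A$ over $B$ means that $A_{xy}$ is flat as a left $B_x$-module for all $x,y \in X$, so \prref{2.3} gives that the counit component $\varepsilon^M_{xy}$ is a bijection for every descent datum $(M,\sigma)$ and all $x,y \in X$; hence the counit $\varepsilon$ is a natural isomorphism. Under assumption (1), \prref{2.4} gives that the unit component $\eta^N_x$ is a bijection for every $N \in \Dd_k(X)_B$ and every $x \in X$, so the unit $\eta$ is a natural isomorphism as well. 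Thus $(F,G)$ is a pair of inverse equivalences, which yields the second equivalence of categories asserted in (2).

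For (2)~$\Rightarrow$~(1), fix $x \in X$. Since $A_{xx}$ is already flat as a left $B_x$-module by local flatness, I need only upgrade this to faithful flatness. I use the standard characterization: a flat left $B_x$-module $M$ is faithfully flat if and only if $N \otimes_{B_x} M \neq 0$ for every nonzero right $B_x$-module $N$. So take a nonzero right $B_x$-module $N_x$ and form the diagonal $B$-module $N \in \Dd_k(X)_B$ supported at $x$, by setting the $x$-component equal to the given $N_x$ and all other components equal to zero. Assumption (2) makes the unit component $\eta^N_x\colon N_x \to GF(N)_x$ an isomorphism, so $GF(N)_x \neq 0$. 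Since $GF(N)_x \subset N_x \otimes_{B_x} A_{xx}$ by the definition of $G$, it follows that $N_x \otimes_{B_x} A_{xx} \neq 0$, establishing faithful flatness of $A_{xx}$ over $B_x$.

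The proof presents no major obstacle, as Propositions~\ref{pr:2.3} and \ref{pr:2.4} carry the technical load. The only subtlety is in the reverse direction, where one must recognize that faithful flatness can be detected at a single object $x$ by probing the unit with a diagonal module concentrated at that object; this is precisely what allows the local conclusion of \prref{2.4} to be inverted, without any reference to the other indices of $X$.
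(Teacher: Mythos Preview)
Your proof is correct. The implication $(1)\Rightarrow(2)$ is handled exactly as in the paper, by invoking Propositions~\ref{pr:2.3} and~\ref{pr:2.4} directly. For $(2)\Rightarrow(1)$ you take a slightly different route: the paper tests faithful flatness of $A_{xx}$ via reflection of exact sequences (starting from a sequence of $B_x$-modules whose tensor with $A_{xx}$ is exact, then applying $G$ and the unit isomorphism to recover exactness of the original sequence), whereas you use the equivalent criterion that a flat module is faithfully flat if and only if tensoring with it kills no nonzero module. Your version is more economical: it sidesteps any discussion of how $G$ interacts with exact sequences in $\Desc_B(A)$, and reduces everything to the single observation that $GF(N)_x$ sits inside $N_x\ot_{B_x}A_{xx}$. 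Both arguments ultimately hinge on the same fact, namely that the unit $\eta^N_x$ is bijective and that $GF(N)_x$ is computed inside $N_x\ot_{B_x}A_{xx}$, so the difference is one of packaging rather than substance.
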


\begin{proof}
We only need to prove $(2) \Rightarrow (1)$. Assume that $(F,G)$ is a pair of inverse equivalences. Fix $x \in X$ an let
\begin{equation}
0 \rightarrow N'_{x} \rightarrow N_{x} \rightarrow N''_{x} \rightarrow 0
\end{equation}
be a sequence of left $B_{x}$-modules such that
\begin{equation}
\eqlabel{2.5.1}
0 \rightarrow N'_{x} \otimes_{B_{x}} A_{xx} \rightarrow N_{x} \otimes_{B_{x}} A_{xx} \rightarrow N''_{x} \otimes_{B_{x}} A_{xx} \rightarrow 0
\end{equation}
is exact. If we apply the functor $G$ to the sequence, and use the fact that every $\eta^{N}_{x}$ is bijective, we find that \equref{2.5.1} is exact.
\end{proof}

For later use, we briefly discuss ${}_B\Desc(A)$, the category of left descent data.
Let $i:\ B\to A$ be as before. A left descent datum is a left $A$-module $M$
together with a family of linear maps $\tau=(\tau_{xy})_{x,y\in X}$,
$\tau_{xy}:\ M_{xy}\to A_{xy}\ot_{B_y} M_{yy}$, $\tau_{xy}(n)=n_{<-1>}\ot_{B_y}n_{<0>}$,
satisfying the following properties, for all $m\in M_{xy}$ and $a\in A_{zx}$:
\begin{eqnarray}
\tau_{xy}(am)&=&am_{<-1>}\ot_{B_y}m_{<0>};\eqlabel{2.6.1}\\
m_{<-1>}\ot_{B_y}\tau_{yy}(m_{<0>})&=& m_{<-1>}\ot_{B_y}1_y\ot_{B_y}m_{<0>}\eqlabel{2.6.2};\\
m_{<-1>}m_{<0>}&=&m\eqlabel{2.6.3}.
\end{eqnarray}
There is a pair of adjoint functors between ${}_B\Dd_k(X)$ and ${}_B\Desc(A)$,
which is a pair of inverse equivalences if every $A$ is locally faithfully flat as a right $B$-module.

\section{Relative Hopf modules}\selabel{3}
\subsection{Right relative Hopf modules}\selabel{3.1}
\begin{definition}\delabel{3.1}
Let $H$ be a $k$-linear semi-Hopf category. A right $H$-comodule category is a $k$-linear category $A$
with the following additional structure: every $A_{xy}$ is a right $H_{xy}$-comodule, with coaction
$\rho_{xy}:\ A_{xy}\to A_{xy}\ot H_{xy}$, that is compatible with the multiplication and unit on $A$
in the sense that
\begin{equation}\eqlabel{3.1.1}
\rho_{xz}(ab)=a_{[0]}b_{[0]}\ot a_{[1]}b_{[1]}~~{\rm and}~~\rho_{xx}(1_x^A)=1_x^A\ot 1_x^H,
\end{equation}
for all $a\in A_{xy}$ and $b\in A_{yz}$. We used the obvious Sweedler notation for the coaction.
A right relative $(A,H)$-Hopf module $M$ is a right $A$-module $M\in \Mm_k(X)_A$ such that every $M_{xy}$ is a right
$H_{xy}$-comodule satisfying the equation
\begin{equation}\eqlabel{3.1.2}
\rho_{xz}(ma)=m_{[0]}a_{[0]}\ot m_{[1]}a_{[1]},
\end{equation}
for all $m\in M_{xy}$ and $a\in A_{yz}$.\\
A morphism $f:\ M\to M'$ between two relative Hopf modules is a morphism $f:\ M\to M'$ that is
a morphism in $\Mm(X)_A$ and in in $\Mm(X)^H$. The category of relative Hopf modules is denoted $\Mm(X)^H_A$.
\end{definition}

Let $M$ be a relative $(A,H)$-Hopf module. For each $x\in X$, we consider $M^{{\rm co}H_{xx}}_{xx}$. Then
the $N_x$ are the components of an object $N=M^{{\rm coH}}\in \Dd_k(X)$. We then have that
$N_x=M^{{\rm coH}}_x=M^{{\rm co}H_{xx}}_{xx}$.
In particular $A$ is a relative Hopf module, and we can consider $A^{{\rm co}H}$. It is easy to see that every
$A^{{\rm co}H}_x$ is a $k$-algebra, so that $A^{{\rm co}H}$ is an algebra in $\Dd_k(X)$, and $M\in \Dd_k(X)_{A^{{\rm co}H}}$, for all $M\in \Mm(X)^{H}_{A}$.
Assume that $B$ is an algebra in $\Dd_k(X)$, and that we have an algebra morphism $i:\ B\to A^{{\rm co}H}$.
It follows that every relative Hopf module $M$ is a right $B$-module, by restriction of scalars.

\begin{proposition}\prlabel{3.2}
For any $H$-comodule algebra $A$,
we have a pair of adjoint functors $(F,G)$ between the categories $\Dd_k(X)_B$ and $\Mm(X)^H_A$.
\end{proposition}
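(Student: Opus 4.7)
The strategy is to mirror the construction in \prref{2.2}, adding the $H$-coaction where needed. Define $F:\ \Dd_k(X)_B\to \Mm(X)^H_A$ by
$$F(N)_{xy}=N_x\ot_{B_x}A_{xy},$$
with the evident right $A$-action on the second tensor factor, and with $H$-coaction
$$\rho_{xy}(n\ot_{B_x}a)=n\ot_{B_x}a_{[0]}\ot a_{[1]}.$$
The first task is to check that $\rho_{xy}$ is well-defined on the tensor product over $B_x$: if $b\in B_x$, then $i(b)\in A^{{\rm co}H}_x$, so $\rho_{xx}(i(b))=i(b)\ot 1_x^H$, and \equref{3.1.1} gives $\rho_{xy}(i(b)a)=i(b)a_{[0]}\ot a_{[1]}$, which allows $b$ to cross the tensor bar. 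The comodule axioms on each $F(N)_{xy}$ are immediate from those of $A_{xy}$, and compatibility \equref{3.1.2} of the coaction with the right $A$-action follows from \equref{3.1.1}.

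Next define $G:\ \Mm(X)^H_A\to \Dd_k(X)_B$ by $G(M)_x=M_{xx}^{{\rm co}H_{xx}}$. Since $i:\ B\to A^{{\rm co}H}$ is an algebra morphism in $\Dd_k(X)$, $B_x$ acts on $G(M)_x$: if $m\in G(M)_x$ and $b\in B_x$ then \equref{3.1.2} yields
$$\rho_{xx}(m\cdot i(b))=m_{[0]}i(b)_{[0]}\ot m_{[1]}i(b)_{[1]}=m\cdot i(b)\ot 1_x^H,$$
so $m\cdot i(b)\in G(M)_x$. On morphisms, $G(f)_x$ is the restriction/corestriction of $f_{xx}$, which is legitimate because $f$ is $H$-colinear and thus sends coinvariants to coinvariants.

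The unit and counit are then defined exactly as in \prref{2.2}:
$$\eta^N_x:\ N_x\to GF(N)_x\subset N_x\ot_{B_x}A_{xx},\quad \eta^N_x(n)=n\ot_{B_x}1_x^A,$$
which lands in the coinvariants by \equref{3.1.1}, and
$$\varepsilon^M_{xy}:\ G(M)_x\ot_{B_x}A_{xy}\to M_{xy},\quad \varepsilon^M_{xy}(m\ot_{B_x}a)=ma,$$
which is $A$-linear tautologically and $H$-colinear by \equref{3.1.2} combined with $m\in M^{{\rm co}H}$. Naturality of $\eta$ and $\varepsilon$ and the two triangle identities follow by direct component-wise calculation, identical in form to the verifications already carried out in \prref{2.2}.

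The only real obstacle is the well-definedness of the coaction on $F(N)$ across the relative tensor product, which is precisely where the hypothesis that $B$ maps into $A^{{\rm co}H}$ (rather than merely into $A$) is essential; once this is in place, everything else is a formal transcription of the descent-theoretic adjunction, now upgraded so that the codomain category records an $H$-coaction rather than a descent datum $\sigma$.
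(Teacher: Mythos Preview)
Your proof is correct and follows exactly the same construction as the paper: the functors $F$ and $G$, the unit $\eta$, and the counit $\varepsilon$ are defined identically. If anything, you supply more detail than the paper does---in particular the well-definedness of the coaction on $F(N)_{xy}$ over $B_x$ and the $H$-colinearity of $\varepsilon^M$---where the paper simply writes ``verification of all further details is left to the reader.''
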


\begin{proof}
We first define $F:\ \Dd_k(X)_B\to \Mm_k(X)^H_A$. For $N\in \Dd_k(X)_B$, we have $M=F(N)\in \Mm_k(X)^H_A$,
with $M_{xy}= N_x\ot_{B_x} A_{xy}$, with structure maps
$$(n\ot a)b=n\ot ab~~~;~~~\rho_{xy}(n\ot a)=n\ot a_{[0]}\ot a_{[1]},$$
for all $n\in N_x$, $a\in A_{xy}$ and $b\in A_{yz}$. For $g:\ N\to N'$ in $\Dd_k(X)_B$, $F(g)_{xy}= g_{x}\ot A_{xy}$.\\
For $M\in \Mm_k(X)^H_A$, let $G(M)=M^{{\rm co}H}$. Consider $f:\ M\to M'$ in $\Mm(X)^H_A$. It is easy to see
that $f_{xx}:\ M_{xx}\to M'_{xx}$ restricts and corestricts to a map $M_{xx}^{{\rm co}H_{xx}}\to {M'}_{xx}^{{\rm co}H_{xx}}$
which is by definition the $x$-component of $G(f)=f^{{\rm co}H}$.\\
Now we describe the unit and counit of the adjunction. For $N\in \Dd_k(X)_B$, $\eta^N:\ N\to GF(N)$ has components
$$\eta^N_x:\ N_x\to (N_x\ot _{A^{{\rm co}H}_x} A_{xx})^{{\rm co}H_{xx}},~~\eta^N_x(n)=n\ot 1_x.$$
For $M\in \Mm(X)^H_A$, $\varepsilon^M:\ FG(M)\to M$ has components
$$\varepsilon^M_{xy}:\ M^{{\rm co}H}_x\ot_{B_x} A_{xy}\to M_{xy},~~~\varepsilon^M_{xy}(m\ot_{B_x}a)=ma.$$
The verification of all further details is left to the reader.
\end{proof}

We will investigate when $(F,G)$ is a pair of inverse equivalences. We begin with some necessary conditions.
For $x,y,z\in X$, we consider the maps
$$\can^z_{xy}:\ A_{zx}\ot_{B_x} A_{xy}\to A_{zy}\ot H_{xy},~~~
\can^z_{xy}(a\ot a')=aa'_{[0]}\ot a'_{[1]}.$$

\begin{proposition}\prlabel{3.3}
We consider the pair of adjoint functors $(F,G)$ from \prref{3.2}.
\begin{enumerate}
\item If $F$ is fully faithful, then $i:\ B\to A^{{\rm co}H}$ is an isomorphism.
\item If $G$ is fully faithful, then all the $\can^z_{xy}$ are isomorphisms.
\end{enumerate}
\end{proposition}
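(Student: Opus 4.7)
The argument rests on the standard adjunction fact that $F$ is fully faithful iff the unit $\eta$ is a natural isomorphism, and $G$ is fully faithful iff the counit $\varepsilon$ is a natural isomorphism. The task is thus to evaluate these natural transformations at suitably chosen test objects and to recognize the resulting specialized maps as the ones named in the statement.

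\textbf{Part (1).} I evaluate $\eta$ at $N=B$, regarded as a diagonal right $B$-module. Then $F(B)_{xy}=B_x\ot_{B_x}A_{xy}\cong A_{xy}$ via $b\ot_{B_x}a\mapsto i_x(b)a$, so $F(B)$ is canonically identified with $A$ as a relative Hopf module, and consequently $GF(B)_x=A_{xx}^{{\rm co}H_{xx}}=A^{{\rm co}H}_x$. The explicit unit formula $\eta^N_x(n)=n\ot_{B_x}1_x$ from the proof of \prref{3.2} then becomes $\eta^B_x(b)=i_x(b)$, i.e.\ $\eta^B=i$. Fully faithfulness of $F$ forces each $\eta^B_x$ to be invertible, which is exactly the assertion that $i:B\to A^{{\rm co}H}$ is an isomorphism.

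\textbf{Part (2).} For each $z\in X$, I build a relative Hopf module $M^{(z)}\in\Mm_k(X)^H_A$ by setting $M^{(z)}_{xy}:=A_{zy}\ot H_{xy}$, with right $A$-action
$$(a'\ot h)\cdot a\;:=\;a'a_{[0]}\ot h\,a_{[1]}\qquad(a'\in A_{zy},\ h\in H_{xy},\ a\in A_{yu}),$$
and with right $H_{xy}$-coaction $\rho_{xy}(a'\ot h):=a'\ot h_{(1)}\ot h_{(2)}$. Using the standard identity $\{h\in H_{xx}:\Delta_{xx}(h)=h\ot 1_x\}=k\cdot 1_x$, the coinvariants $G(M^{(z)})_x=(A_{zx}\ot H_{xx})^{{\rm co}H_{xx}}$ identify with $A_{zx}\ot 1_x\cong A_{zx}$. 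Under this identification, the counit specializes to
$$\varepsilon^{M^{(z)}}_{xy}:\ A_{zx}\ot_{B_x}A_{xy}\to A_{zy}\ot H_{xy},\qquad a\ot_{B_x}a'\;\mapsto\;(a\ot 1_x)\cdot a'\;=\;aa'_{[0]}\ot a'_{[1]},$$
which is literally $\can^z_{xy}$. Fully faithfulness of $G$ therefore renders every $\can^z_{xy}$ invertible.

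\textbf{Main obstacle.} The only non-routine step is the verification that $M^{(z)}$ really is a relative Hopf module. Checking \equref{3.1.2} requires expanding both sides, invoking the semi-Hopf identity $\Delta_{xu}(hk)=h_{(1)}k_{(1)}\ot h_{(2)}k_{(2)}$ on the left-hand side, and using coassociativity of the $A$-coaction to reconcile the nested coaction $(a_{[0]})_{[0]}\ot(a_{[0]})_{[1]}\ot a_{[1]}$ on the right with the iterated comultiplication $a_{[0]}\ot(a_{[1]})_{(1)}\ot(a_{[1]})_{(2)}$ on the left. Once this bookkeeping is done, parts (1) and (2) both follow by direct inspection of the unit and counit formulas from \prref{3.2}.
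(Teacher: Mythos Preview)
Your proof is correct and follows essentially the same route as the paper: evaluate the unit at $N=B$ for part (1), and for part (2) build the test object $M^z_{xy}=A_{zy}\ot H_{xy}$ and identify the counit (after computing coinvariants) with $\can^z_{xy}$. One small remark: your appeal to the ``standard identity'' $H_{xx}^{{\rm co}H_{xx}}=k\cdot 1_x$ does not by itself give $(A_{zx}\ot H_{xx})^{{\rm co}H_{xx}}=A_{zx}\ot 1_x$ over a general commutative ring $k$, since $A_{zx}\ot_k(-)$ need not preserve equalizers; the paper (and the same counit trick you have in mind) handles this directly by applying $\varepsilon_{xx}$ to the middle tensor factor of the coinvariance equation, which works without any flatness hypothesis.
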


\begin{proof}
1) If $F$ is fully faithful, then $\eta^N_x$ is an isomorphism, for all $x\in X$ and $N\in \Dd_k(X)_B$.
Take $N=B$. Then we have that
$$i_x=\eta^B_x:\ B_x\to (B_x\ot_{B_x} A_{xx})^{{\rm co}H_{xx}}=A_{xx}^{{\rm co}H_{xx}}=A^{{\rm co}H}_x$$
is an isomorphism.\\
2) Assume that $G$ is fully faithful. For each $z\in X$, consider $M^z\in \Mm_k(X)^H_A$ defined as follows:
$M^z_{xy}=A_{zy}\ot H_{xy}$, with structure maps
$$\rho_{xy}(a\ot h)=a\ot h_{(1)}\ot h_{(2)}~~~;~~~(a\ot h)a'=aa'_{[0]}\ot ha'_{[1]},$$
for all $a\in A_{zy}$, $h\in H_{xy}$, $a'\in A_{yu}$. We claim that
\begin{equation}\eqlabel{3.3.1}
(M^z_{xx})^{{\rm co}H_{xx}}\cong A_{zx}.
\end{equation}
It suffices to show that the maps
\begin{eqnarray*}
&& f:\ A_{zx}\to (M^z_{xx})^{{\rm co}H_{xx}},~~~f(a)=a\ot 1_x;\\
&&g: (M^z_{xx})^{{\rm co}H_{xx}}\to A_{zx},~~~g(\sum_i a_i\ot h_i)=\sum_i a_i\varepsilon_{xy}(h_i);
\end{eqnarray*}
are inverses. It is obvious that $g\circ f= A_{zx}$. Now take
$\sum_i a_i\ot h_i\in (M^z_{xx})^{{\rm co}H_{xx}}$. Then
$$\sum_i a_i\ot h_{i(1)}\ot h_{i(2)}=\sum_i a_i\ot h_i\ot 1_x.$$
Applying $\varepsilon_{xx}$ to the second tensor factor, we find that
$$\sum_i a_i\ot h_i=\sum_i a_i \varepsilon_{xx}(h_i)\ot 1_x=(f\circ g)(\sum_i a_i\ot h_i),$$
and this shows that $f\circ g= (M^z_{xx})^{{\rm co}H_{xx}}$. Finally observe that
\begin{eqnarray*}
&&\hspace*{-5mm}
\can^z_{xy}= \varepsilon^{M^z}_{xy}\circ (f\ot_{B_x}A_{xy})~:\\
&&
A_{zx}\ot_{B_x} A_{xy}\to (M^z_{xx})^{{\rm co}H_{xx}}\ot_{B_x} A_{xy}\to M^z_{xy}=A_{zy}\ot H_{xy}
\end{eqnarray*}
is an isomorphism. Indeed,
$$(\varepsilon^{M^z}_{xy}\circ (f\ot_{B_x}A_{xy}))(a\ot a')=(a\ot 1_x)a'=aa'_{[0]}\ot a'_{[1]}=\can^z_{xy}(a\ot a').$$
\end{proof}

\begin{proposition}\prlabel{3.4}
Let $H$ be a $k$-linear semi-Hopf category, let $A$ be a right $H$-comodule category, and let $B=A^{{\rm co}H}$.
Then we have a functor 
$$P:\ \Desc_B(A)\to \Mm_k(X)_A^H.$$
\end{proposition}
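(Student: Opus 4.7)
Given a descent datum $(M,\sigma) \in \Desc_B(A)$, the plan is to retain the right $A$-module structure of $M$ and equip each $M_{xy}$ with the $H_{xy}$-coaction
$$\rho^M_{xy}(m) = m_{<0>}\,(m_{<1>})_{[0]}\ot (m_{<1>})_{[1]},$$
which lands in $M_{xy} \ot H_{xy}$ because $(m_{<1>})_{[0]} \in A_{xy}$ acts on $m_{<0>} \in M_{xx}$ to give an element of $M_{xy}$. Since $B_x$ embeds into $A^{{\rm co}H}_x$, every element in the image of $i_x$ is coinvariant, so $\rho^A_{xx}(i_x(b)) = i_x(b) \ot 1^H_x$, and the formula is compatible with the tensor relation over $B_x$ in the codomain of $\sigma_{xy}$. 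The counit axiom for $\rho^M_{xy}$ reduces to the counit axiom for $\rho^A_{xy}$ together with \equref{2.1.3}.

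The key verification is coassociativity. Writing $a = (m_{<1>})_{[0]}$, \equref{2.1.1} yields $\sigma_{xy}(m_{<0>} a) = (m_{<0>})_{<0>} \ot_{B_x} (m_{<0>})_{<1>} a$, whence
$$(\rho^M_{xy} \ot H_{xy})(\rho^M_{xy}(m)) = (m_{<0>})_{<0>}\, (m_{<0>})_{<1>[0]} a_{[0]} \ot (m_{<0>})_{<1>[1]} a_{[1]} \ot (m_{<1>})_{[1]},$$
whereas coassociativity of $\rho^A_{xy}$ rewrites $(M_{xy} \ot \Delta_{xy})(\rho^M_{xy}(m))$ as $m_{<0>}\, (m_{<1>})_{[0][0]} \ot (m_{<1>})_{[0][1]} \ot (m_{<1>})_{[1]}$. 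To match them, I take \equref{2.1.2}, apply $\rho^A_{xy}$ to its rightmost factor, and multiply the second and third tensor slots to obtain
$$(m_{<0>})_{<0>} \ot_{B_x} (m_{<0>})_{<1>}\,(m_{<1>})_{[0]} \ot (m_{<1>})_{[1]} = m_{<0>} \ot_{B_x} (m_{<1>})_{[0]} \ot (m_{<1>})_{[1]}$$
in $M_{xx} \ot_{B_x} A_{xy} \ot H_{xy}$. Applying $\rho^A_{xy}$ to the middle slot and then the $A$-action on the first two slots collapses this identity into the required coassociativity.

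The remaining pieces are routine. Compatibility of $\rho^M$ with the right $A$-action is read off \equref{2.1.1}: $\sigma_{xz}(ma) = m_{<0>} \ot_{B_x} m_{<1>} a$, and multiplicativity of $\rho^A$ then yields \equref{3.1.2} by inspection. On morphisms I set $P(f) = f$: for $f : (M,\sigma) \to (M',\sigma')$ in $\Desc_B(A)$, the compatibility \equref{2.1.4} combined with $A$-linearity of $f$ forces $(f_{xy} \ot H_{xy}) \circ \rho^M_{xy} = \rho^{M'}_{xy} \circ f_{xy}$, so $f$ is a morphism of relative Hopf modules, and functoriality is immediate.

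The main obstacle is the coassociativity step. The two invocations of $\rho^M$ inside $(\rho^M \ot H) \circ \rho^M$ are asymmetric, since the inner one is applied to an element already living in $M_{xx}$; one cannot merely chase Sweedler indices, and \equref{2.1.2} has to be threaded through the $H$-coaction on $A_{xy}$ in precisely the right order for the two copies of $\sigma$ to collapse into a single one.
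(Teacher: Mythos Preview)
Your proposal is correct and follows essentially the same route as the paper: define $\rho^M_{xy}(m)=m_{<0>}(m_{<1>})_{[0]}\ot (m_{<1>})_{[1]}$, check the counit via \equref{2.1.3}, coassociativity via \equref{2.1.1} followed by \equref{2.1.2}, the compatibility \equref{3.1.2} via \equref{2.1.1} and multiplicativity of $\rho^A$, and set $P(f)=f$ using \equref{2.1.4} together with right $A$-linearity. Your organization of the coassociativity step (first extracting an identity in $M_{xx}\ot_{B_x}A_{xy}\ot H_{xy}$ from \equref{2.1.2} and then pushing it through $\rho^A_{xy}$ and the action map) is just a repackaging of the paper's single chain of equalities; the added remark on well-definedness over $B_x$ is a welcome detail the paper leaves implicit.
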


\begin{proof}
Take $(M,\sigma)\in \Desc_B(A)$, and consider
$$\rho_{xy}:\ M_{xy}\to M_{xy}\ot H_{xy},~~\rho_{xy}(m)=m_{<0>}m_{<1>[0]}\ot m_{<1>[1]}.$$
We will show that $(M,\rho)\in \Mm_k(X)_A^H$. Let us first show that $\rho_{xy}$ is coassociative.
\begin{eqnarray*}
&&\hspace*{-15mm}
((\rho_{xy} \ot A_{xy})\circ \rho_{xy})(m)=
\bigl(m_{<0>}m_{<1>[0]}\bigr)_{<0>}\bigl(m_{<0>}m_{<1>[0]}\bigr)_{<1>[0]}\\
&&~~~
\ot ~\bigl(m_{<0>}m_{<1>[0]}\bigr)_{<1>[1]} \ot m_{<1>[1]}\\
&\equal{\equref{2.1.1}}&
m_{<0><0>}\bigl(m_{<0><1>}m_{<1>[0]}\bigr)_{[0]}\ot \bigl(m_{<0><1>}m_{<1>[0]}\bigr)_{[1]}
\ot m_{<1>[1]}\\
&\equal{\equref{2.1.2}}&
m_{<0>} \bigl(1_xm_{<1>[0]}\bigr)_{[0]}\ot \bigl(1_xm_{<1>[0]}\bigr)_{[1]}\ot m_{<1>[1]}\\
&=& m_{<0>}m_{<1>[0]} \ot m_{<1>[1]}\ot m_{<1>[2]}
= ((M_{xy}\ot \Delta_{xy})\circ \rho_{xy})(m).
\end{eqnarray*}
We proceed with the counit property
$$((M_{xy}\ot \varepsilon_{xy})\circ \rho_{xy})(m)=
m_{<0>}m_{<1>[0]} \varepsilon_{xy}( m_{<1>[1]})=m_{<0>}m_{<1>}
\equal{\equref{2.1.2}}m.$$
Finally, the compatibility condition \equref{3.1.2} holds. For $m\in M_{xy}$ and $a\in A_{yz}$, we have that
\begin{eqnarray*}
\rho_{xz}(ma)
&\equal{\equref{2.1.1}}&
m_{<0>}\bigl(m_{<1>}a\bigr)_{[0]}\ot \bigl(m_{<1>}a\bigr)_{[1]}\\
&=& m_{<0>}m_{<1>[0]}a_{[0]}\ot m_{<1>[1]}a_{[1]}=m_{[0]}a_{[0]}\ot m_{[1]}a_{[1]}.
\end{eqnarray*}
We now define $P(M,\sigma)=(M,\rho)$. Let $f: (M,\sigma)\to (M',\sigma')$ be a morphism in $\Desc_B(A)$.
We claim that $f$ is also a morphism $(M,\rho)\to (M',\rho')$ in $\Mm_k(X)_A^H$. To this end, we need to show
that every $f_{xy}:\ M_{xy}\to M'_{xy}$ is $H_{xy}$-colinear. For all $m\in M_{xy}$, we have that
\begin{eqnarray*}
&&\hspace*{-15mm}
f_{xy}(m_{[0]})\ot m_{[1]}
= f_{xy}(m_{<0>}m_{<1>[0]}) \ot  m_{<1>[1]}\\
&\equal{*}& f_{xx}(m_{<0>})m_{<1>[0]} \ot  m_{<1>[1]}\\
&\equal{\equref{2.1.4}}&
f_{xy}(m)_{<0>} f_{xy}(m)_{<1>[0]}\ot f_{xy}(m)_{<1>[1]}
= \rho'_{xy}(f_{xy}(m)).
\end{eqnarray*}
At $*$, we used the fact that $f$ is right $A$-linear.
We now define $P(f)=f$.
\end{proof}

\begin{theorem}\thlabel{3.5}
Let $H$ be a $k$-linear semi-Hopf category, let $A$ be a right $H$-comodule category, and let $B=A^{{\rm co}H}$.
Then the following assertions are equivalent.
\begin{enumerate}
\item $\can^z_{xy}$ is bijective, for all $x,y,z\in X$;
\item $\can^y_{xy}$ is bijective, and $\can^x_{xy}$ has a left inverse $g_{xy}$, for all $x,y\in X$;
\item for all $x,y\in X$, there exists $\gamma_{xy}:\ H_{xy}\to A_{yx}\ot_{B_x} A_{xy}$, notation
$$\gamma_{xy}(h)=\sum_i l_i(h)\ot_{B_x} r_i(h),$$
such that
\begin{eqnarray}
\sum_i l_i(h)r_i(h)_{[0]} \ot r_i(h)_{[1]}&=& 1_y\ot h;\eqlabel{3.5.1}\\
\sum_ia_{[0]}l_i(a_{[1]}) \ot_{B_x} r_i(a_{[1]})&=& 1_x\ot_{B_x} a,\eqlabel{3.5.2}
\end{eqnarray}
for all $h\in H_{xy}$ and $a\in A_{xy}$.
\end{enumerate}
If these equivalent conditions are satisfied, then we call $A$ an $H$-Galois category extension of $B=A^{{\rm co}H}$.
\end{theorem}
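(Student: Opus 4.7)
The implication $(1)\Rightarrow(2)$ is immediate. I would prove $(3)\Rightarrow(1)$ and $(2)\Rightarrow(3)$; this triangle is the most economical, since $(3)$ is the convenient handle for constructing inverses, while $(2)$ is a minimal hypothesis that still forces $\gamma$ to exist.

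\textbf{Step 1: $(3)\Rightarrow(1)$.} Assume $\gamma_{xy}$ satisfies \equref{3.5.1} and \equref{3.5.2}. For each $z\in X$, I define a candidate inverse
$$\widetilde{\can}^z_{xy}:\ A_{zy}\ot H_{xy}\to A_{zx}\ot_{B_x}A_{xy},\quad \widetilde{\can}^z_{xy}(b\ot h)=\sum_i bl_i(h)\ot_{B_x}r_i(h).$$
A direct computation shows $\can^z_{xy}\circ\widetilde{\can}^z_{xy}=\id$ by \equref{3.5.1} (multiplying $b$ into the left tensor slot of the identity), and $\widetilde{\can}^z_{xy}\circ\can^z_{xy}=\id$ by \equref{3.5.2} (after pulling the left factor $a\in A_{zx}$ through the tensor over $B_x$).

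\textbf{Step 2: $(2)\Rightarrow(3)$.} Since $\can^y_{xy}$ is bijective, I define
$$\gamma_{xy}(h):=(\can^y_{xy})^{-1}(1_y\ot h)=\sum_i l_i(h)\ot_{B_x} r_i(h)\in A_{yx}\ot_{B_x}A_{xy}.$$
Applying $\can^y_{xy}$ to both sides immediately yields \equref{3.5.1}. To obtain \equref{3.5.2}, I introduce
$$\theta_{xy}:\ A_{xy}\ot H_{xy}\to A_{xx}\ot_{B_x}A_{xy},\quad \theta_{xy}(a\ot h)=\sum_i al_i(h)\ot_{B_x} r_i(h),$$
which is well defined because $l_i(h)\in A_{yx}$ and so $al_i(h)\in A_{xx}$. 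The key identity I would verify is
$$\can^x_{xy}\circ\theta_{xy}\circ\can^x_{xy}=\can^x_{xy}.$$
Indeed, for $a'\ot_{B_x}a\in A_{xx}\ot_{B_x}A_{xy}$,
$$\can^x_{xy}\bigl(\theta_{xy}(a'a_{[0]}\ot a_{[1]})\bigr)=\sum_i a'a_{[0]}l_i(a_{[1]})r_i(a_{[1]})_{[0]}\ot r_i(a_{[1]})_{[1]},$$
and applying \equref{3.5.1} with $h$ replaced by $a_{[1]}$ (under the implicit Sweedler sum) collapses this to $a'a_{[0]}\ot a_{[1]}=\can^x_{xy}(a'\ot_{B_x} a)$, as required.

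\textbf{Step 3: concluding \equref{3.5.2}.} Here the hypothesis that $\can^x_{xy}$ has a left inverse $g_{xy}$ enters: it forces $\can^x_{xy}$ to be injective. Combined with the identity established in Step~2, this gives $\theta_{xy}\circ\can^x_{xy}=\id_{A_{xx}\ot_{B_x}A_{xy}}$. Specialising to $a'=1_x$, we obtain
$$\sum_i a_{[0]}l_i(a_{[1]})\ot_{B_x}r_i(a_{[1]})=\theta_{xy}(\can^x_{xy}(1_x\ot_{B_x}a))=1_x\ot_{B_x}a,$$
which is precisely \equref{3.5.2}, completing the implication.

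\textbf{Main obstacle.} The only nontrivial point is the interplay between the coaction and the translation map in Step~2: one must be careful that the single relation \equref{3.5.1} (involving one element $h\in H_{xy}$) suffices to collapse the expression $\sum_i a'a_{[0]}l_i(a_{[1]})r_i(a_{[1]})_{[0]}\ot r_i(a_{[1]})_{[1]}$ under the outer Sweedler sum over $a_{[0]}\ot a_{[1]}$. Everything else is routine verification.
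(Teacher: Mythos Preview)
Your proof is correct and follows essentially the same route as the paper: the paper also proves $(1)\Rightarrow(2)\Rightarrow(3)\Rightarrow(1)$, defines $\gamma_{xy}(h)=(\can^y_{xy})^{-1}(1_y\ot h)$, and builds the same candidate inverse $f^z_{xy}(a\ot h)=\sum_i al_i(h)\ot_{B_x}r_i(h)$ (your $\widetilde{\can}^z_{xy}$ and $\theta_{xy}$). The only cosmetic difference is in how the left-inverse hypothesis is used: the paper first observes that \equref{3.5.1} makes $f^x_{xy}$ a genuine right inverse of $\can^x_{xy}$ on the whole codomain, and then invokes the standard ``left inverse equals right inverse'' argument to conclude $f^x_{xy}=g_{xy}$ is a two-sided inverse; you instead establish the weaker $\can^x_{xy}\circ\theta_{xy}\circ\can^x_{xy}=\can^x_{xy}$ and cancel on the left by injectivity. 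Both arguments are valid and amount to the same computation.
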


\begin{proof}
$\ul{(1)\Rightarrow (2)}$ is trivial.\\
$\ul{(2)\Rightarrow (3)}$. We define $\gamma_{xy}$ by the formula
$$\gamma_{xy}(h)=(\can^y_{xy})^{-1}(1_y\ot h).$$
Then
$$1_y\ot h=\can^y_{xy}\bigl( \sum_i l_i(h)\ot_{B_x} r_i(h)\bigr)= \sum_i l_i(h)r_i(h)_{[0]} \ot r_i(h)_{[1]},$$
so that \equref{3.5.1} holds. Now we define
$f^z_{xy}:\ A_{zy}\ot H_{xy}\to A_{zx}\ot_{B_x} A_{xy}$ by the formula
\begin{equation}\eqlabel{3.5.3}
f^z_{xy}(a\ot h)= \sum_i al_i(h)\ot_{B_x} r_i(h).
\end{equation}
Then
$$(\can^z_{xy}\circ f^z_{xy})(a\ot h)=\sum_i al_i(h)r_i(h)_{[0]}\ot r_i(h)_{[1]}
\equal{\equref{3.5.1}} a1_y\ot h= a\ot h.$$
It follows that $f^z_{xy}$ is a right inverse of $\can^z_{xy}$. By assumption, $g_{xy}$ is a left inverse
of $\can^x_{xy}$, and it follows easily that
$$g_{xy}=g_{xy}\circ \can^x_{xy}\circ f^x_{xy}=f^x_{xy}.$$
For all $a\in A_{xy}$, we now have that
\begin{eqnarray*}
1_x\ot_{B_x} a&=& (f^x_{xy}\circ \can^x_{xy})(1_x\ot_{B_x} a)\\
&=& f^x_{xy}(a_{[0]}\ot a_{[1]})= \sum_ia_{[0]}l_i(a_{[1]}) \ot_{B_x} r_i(a_{[1]}),
\end{eqnarray*}
and this proves that \equref{3.5.2} holds.\\
$\ul{(3)\Rightarrow (1)}$. We define $f^z_{xy}$ using \equref{3.5.3}. We have shown above that $f^z_{xy}$
is a right inverse of $\can^z_{xy}$. It is also a left inverse since
\begin{eqnarray*}
&&\hspace*{-15mm}
(f^z_{xy} \circ \can^z_{xy})(a\ot_{B_x} a')=f^z_{xy}(aa'_{[0]}\ot a'_{[1]})\\
&=& \sum_i aa'_{[0]}l_i(a'_{[1]})\ot_{B_x} r_i(a'_{[1]})\equal{\equref{3.5.2}}
a1_x\ot_{B_x}a'=a\ot_{B_x}a',
\end{eqnarray*}
for all $a\in A_{zx}$ and $a'\in A_{xy}$.
\end{proof}

\begin{example}\exlabel{3.6}
Let $H$ be a $k$-linear semi-Hopf category. $H$ is a right $H$-comodule category, and $H^{{\rm co}H}=J$,
with $J_x=k$, for all $x\in X$. It follows from \cite[Theorem 9.2]{BCV} that $H$ is an $H$-Galois category extension
of $J$ if and only if $H$ is a Hopf category.
\end{example}

\begin{proposition}\prlabel{3.7}
Assume that the equivalent conditions of \thref{3.5} are satisfied. The maps $\gamma_{xy}$
have the following properties, for all $h\in H_{xy}$ and $h'\in H_{yz}$:
\begin{eqnarray}
&&\gamma_{xz}(hh')= \sum_{i,j} l_i(h')l_i(h)\ot_{B_x} r_j(h)r_i(h');\eqlabel{3.7.1}\\
&&\gamma_{xy}(h)\in (A_{yx}\ot_{B_x} A_{xy})^{B_y};\eqlabel{3.7.2}\\
&&\gamma_{xy}(h_{(1)})\ot h_{(2)}= \sum_i l_i(h)\ot_{B_x} r_i(h)_{[0]}\ot r_i(h)_{[1]};\eqlabel{3.7.3}\\
&&\sum_il_i(h)r_i(h)=\varepsilon_{xy}(h)1_y;\eqlabel{3.7.4}\\
&&\gamma_{xy}(h_{(2)})\ot S_{xy}(h_{(1)})= \sum_i l_i(h)_{[0]}\ot_{B_x} r_i(h)  \ot l_i(h)_{[1]}.\eqlabel{3.7.5}
\end{eqnarray}
For \equref{3.7.5}, we need the additional assumption that $H$ is a Hopf category.
\end{proposition}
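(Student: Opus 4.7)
The unifying strategy is to prove each identity by applying the appropriate instance of the canonical map $\can^z_{xy}$, tensored if necessary with an identity on an extra $H$-factor, and then invoking its bijectivity, guaranteed by \thref{3.5}. The algebraic inputs will be the defining equations \equref{3.5.1}--\equref{3.5.2} of $\gamma$, coassociativity of the coactions $\rho_{xy}$, multiplicativity \equref{3.1.1} of the coaction, and, for \equref{3.7.5} alone, the antipode axioms \equref{1.1.1}.

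For \equref{3.7.1}, I would apply $\can^z_{xz}$ to the proposed expression; multiplicativity of $\rho$ on $A$ separates the roles of $h$ and $h'$, and two successive applications of \equref{3.5.1} (first the $i$-sum collapsing to $1_y\ot h$, then the $j$-sum collapsing to $1_z\ot h'$ after left-multiplication by $h$ on the $H$-factor) produce $1_z\ot hh'=\can^z_{xz}(\gamma_{xz}(hh'))$, and bijectivity finishes the argument. For \equref{3.7.2}, to show $b\gamma_{xy}(h)=\gamma_{xy}(h)b$ for $b\in B_y$, apply $\can^y_{xy}$ to both sides; since $b\in A^{{\rm co}H}_y$ is an $H_{yy}$-coinvariant, $\rho_{yy}(b)=b\ot 1_y^H$, and a single use of \equref{3.5.1} yields $b\ot h$ on each side.

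For \equref{3.7.3}, apply $\can^y_{xy}\ot H_{xy}$; the left-hand side gives $1_y\ot\Delta_{xy}(h)$ directly from \equref{3.5.1}, and the right-hand side reduces to the same element by coassociativity of $\rho_{xy}$ followed by the image of \equref{3.5.1} under $A_{yy}\ot\Delta_{xy}$. For \equref{3.7.4}, apply $A_{yy}\ot\varepsilon_{xy}$ to \equref{3.5.1} and use counitality of $\rho_{xy}$ on $r_i(h)$ to obtain the claim immediately.

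The main obstacle is \equref{3.7.5}, the only identity genuinely involving the antipode. I would first reduce it, via bijectivity of $\can^y_{xy}\ot H_{yx}$, to proving
$$
\sum_i l_i(h)_{[0]}r_i(h)_{[0]}\ot r_i(h)_{[1]}\ot l_i(h)_{[1]}\,=\,1_y\ot h_{(2)}\ot S_{xy}(h_{(1)})
$$
in $A_{yy}\ot H_{xy}\ot H_{yx}$. To establish this, apply $\rho_{yy}\ot H_{xy}$ to \equref{3.5.1}: multiplicativity of $\rho$ and coassociativity of $\rho_{xy}$ on $r_i(h)$ rewrite the result as an identity in $A_{yy}\ot H_{yy}\ot H_{xy}$ with value $1_y\ot 1_y^H\ot h$. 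Next, split the last factor via $\Delta_{xy}$ and apply $S_{xy}$ to the first new piece; composing the two middle $H$-factors via $H_{yy}\cdot H_{yx}\to H_{yx}$ and reindexing by coassociativity, the expression $r_i(h)_{[1](1)(1)}S_{xy}(r_i(h)_{[1](1)(2)})$ contracts to $\varepsilon_{xy}(r_i(h)_{[1](1)})1_x^H$ by \equref{1.1.1}. A final application of counitality on the coaction of $r_i(h)$ yields the identity in the order $A_{yy}\ot H_{yx}\ot H_{xy}$; permuting the last two tensor factors produces the required form, and the proof concludes.
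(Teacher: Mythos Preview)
Your proposal is correct and follows essentially the same route as the paper: each identity is verified by applying the appropriate $\can$-map (tensored with an identity where needed) and invoking its bijectivity, with \equref{3.7.4} obtained directly from \equref{3.5.1} by applying $A_{yy}\ot\varepsilon_{xy}$, and \equref{3.7.5} handled exactly as in the paper by reducing to the auxiliary identity in $A_{yy}\ot H_{xy}\ot H_{yx}$ via $\can^y_{xy}\ot H_{yx}$, then deriving that identity from \equref{3.5.1} through $\rho_{yy}$, $\Delta_{xy}$, $S_{xy}$, the antipode axiom, and a final swap of tensor factors. The only blemishes are cosmetic: in your sketch of \equref{3.7.1} the roles of the $i$- and $j$-sums are interchanged (the inner sum over $\gamma_{xy}(h)$ collapses first to $1_y\ot h$, then the outer sum over $\gamma_{yz}(h')$ yields $1_z\ot hh'$, not $1_z\ot h'$), but this does not affect the argument.
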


\begin{proof}
\begin{eqnarray*}
&&\hspace*{-2cm}
\can^z_{xy}\Bigl(\sum_{i,j} l_i(h')l_i(h)\ot_{B_x} r_j(h)r_i(h')\Bigr)\\
&=& \sum_{i,j} l_i(h')l_i(h)r_j(h)_{[0]}r_i(h')_{[0]}\ot r_j(h)_{[1]}r_i(h')_{[1]}\\
&\equal{\equref{3.5.1}}& \sum_i l_i(h')1_y r_i(h')_{[0]}\ot h r_i(h')_{[1]}
\equal{\equref{3.5.1}} 1_z\ot hh',
\end{eqnarray*}
proving \equref{3.7.1}. \equref{3.7.2} follows if we can show that
$$\sum_i l_i(h)\ot r_i(h)b=\sum_i bl_i(h)\ot r_i(h),$$
for all $b\in B_y$. Indeed,
\begin{eqnarray*}
&&\hspace*{-2cm}
\can^y_{xy}\Bigl(\sum_i l_i(h)\ot r_i(h)b\Bigr)
= \sum_i l_i(h) r_i(h)_{[0]}b_{[0]}\ot _i(h)_{[1]}b_{[1]}\\
&\equal{\equref{3.5.1}}&1_yb\ot h=b1_y\ot h \equal{\equref{3.5.1}} \can^y_{xy}\Bigl(b\sum_i l_i(h)\ot r_i(h)\Bigr).
\end{eqnarray*}
 \equref{3.7.3} is proved in a similar way:
 \begin{eqnarray*}
 &&\hspace*{-2cm}
 (\can^y_{xy} \ot H_{xy}))\Bigl(\sum_i l_i(h)\ot_{B_x} r_i(h)_{[0]}\ot r_i(h)_{[1]}\Bigr)\\
 &=& \sum_i l_i(h) r_i(h)_{[0]}\ot r_i(h)_{[1]}\ot r_i(h)_{[2]}\\
 &=& \sum_i l_i(h) r_i(h)_{[0]}\ot \Delta_{xy}(r_i(h)_{[1]})\\
&\equal{\equref{3.5.1}}& 1_y\ot \Delta_{xy}(h)=(\can^y_{xy} \ot H_{xy}))\Bigl( \gamma_{xy}(h_{(1)})\ot h_{(2)}\Bigr).
\end{eqnarray*}
 \equref{3.7.4} follows after we apply $A_{yy}\ot \varepsilon_{xy}$ to \equref{3.5.1}. \equref{3.7.5}
 is equivalent to
 \begin{equation}\eqlabel{3.7.6}
 1\ot h_{(2)} \ot S_{xy}(h_{(1)})= \sum_i l_i(h)_{[0]} r_i(h)_{[0]}  \ot r_i(h)_{[1]}  \ot l_i(h)_{[1]}.
 \end{equation}
 Indeed, applying $\can^y_{xy}\ot H_{yx}$ to \equref{3.7.5}, we obtain \equref{3.7.6}. Applying
 $\rho_{yy}\ot ((S_{xy}\ot H_{xy})\circ \Delta_{xy})$ to \equref{3.5.1}, we obtain that
 \begin{eqnarray*}
 &&\hspace*{-1cm}
 \sum_i l_i(h)_{[0]} r_i(h)_{[0]}  \ot l_i(h)_{[1]} r_i(h)_{[1]}\ot S_{xy}(r_i(h)_{[2]})\ot r_i(h)_{[3]}\\
 &=& 1_y^A\ot 1_y^H\ot S_{xy}(h_{(1)})\ot h_{(2)}.
 \end{eqnarray*}
 Now we multiply the second and third tensor factor, and obtain that
 $$\sum_i l_i(h)_{[0]} r_i(h)_{[0]}  \ot l_i(h)_{[1]} \ot r_i(h)_{[1]}=1_y^A\ot S_{xy}(h_{(1)})\ot h_{(2)}.$$
 \equref{3.7.6} follows after we switch the second and the third tensor factor.
 \end{proof}

\leref{3.8} is folklore; we will need it in the proof of \thref{3.9}, and this is why we provide a detailed proof. 

\begin{lemma}\lelabel{3.8}
Let $A$ be a $k$-algebra, and take $M\in \Mm_A$, $N\in {}_A\Mm_A$, $P\in {}_A\Mm$. For $m_i\in M$,
$n_i\in N^A$, $p_i\in P$, we have the following implication:
$$\sum_i n_i\ot m_i\ot_A p_i=0~{\rm in}~N^A\ot M\ot_A P~~\Longrightarrow~
\sum_i m_i\ot_A n_i\ot_A p_i=0~{\rm in}~M\ot_AN\ot_A P.$$
\end{lemma}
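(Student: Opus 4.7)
The plan is to construct an explicit $k$-linear map
\[
\Phi\colon N^A \otimes M \otimes_A P \longrightarrow M \otimes_A N \otimes_A P,\qquad
\Phi(n\otimes m\otimes_A p)=m\otimes_A n\otimes_A p,
\]
and then simply apply it to the given relation. The implication is an immediate consequence of the existence and well-definedness of $\Phi$, so the whole task reduces to checking that this formula really defines a map on the tensor product appearing on the left-hand side.

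First I would introduce the $k$-trilinear assignment $\widetilde{\Phi}\colon N^A\times M\times P\to M\otimes_A N\otimes_A P$, $\widetilde{\Phi}(n,m,p)=m\otimes_A n\otimes_A p$, and verify that it descends through the relations defining the iterated tensor product on the left. The only nontrivial thing to check is that $\widetilde{\Phi}$ is $A$-balanced in the second and third slots, i.e.\ that
\[
\widetilde{\Phi}(n,ma,p)=\widetilde{\Phi}(n,m,ap)
\]
for $n\in N^A$, $m\in M$, $a\in A$, $p\in P$. Unfolding both sides in $M\otimes_A N\otimes_A P$, the left side is $ma\otimes_A n\otimes_A p$, while the right side equals $m\otimes_A n\otimes_A ap=m\otimes_A na\otimes_A p$. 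Here the hypothesis $n\in N^A$ enters decisively: we use $na=an$ to rewrite $m\otimes_A na\otimes_A p=m\otimes_A an\otimes_A p=ma\otimes_A n\otimes_A p$, and the two sides agree. Balancing over $k$ in the first slot is automatic since $N^A$ carries no $A$-action in the left-hand tensor.

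Thus $\widetilde{\Phi}$ induces a well-defined $k$-linear map $\Phi$ as above. Applying $\Phi$ to the hypothesis $\sum_i n_i\otimes m_i\otimes_A p_i=0$ in $N^A\otimes M\otimes_A P$ yields $\sum_i m_i\otimes_A n_i\otimes_A p_i=0$ in $M\otimes_A N\otimes_A P$, which is precisely the desired conclusion. The main (indeed only) obstacle is the verification of the balancing condition, and it is exactly the centrality hypothesis $n_i\in N^A$ that makes it go through; without it, the map $\Phi$ would not even be defined, which explains why the lemma fails for arbitrary elements of $N$.
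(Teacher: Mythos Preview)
Your proof is correct and is essentially the same argument as the paper's: both hinge on the identity $ma\otimes_A n\otimes_A p = m\otimes_A n\otimes_A ap$ for $n\in N^A$, which you package as the well-definedness of a $k$-linear map $\Phi$ via the universal property of the tensor product, while the paper carries out the same check by explicitly writing $\sum_i n_i\otimes m_i\otimes p_i$ as a sum of tensor relations and verifying that the rearranged expression lies in the corresponding relation subgroup. Your phrasing is slightly cleaner, but the content is identical.
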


\begin{proof}
From the assumption that $\sum_i n_i\ot m_i\ot_A p_i=0$, it follows that
$$\sum_i n_i\ot m_i\ot p_i=\sum_j x_j \ot y_ja_j\ot z_j-x_j \ot y_j\ot a_jz_j,$$
for some $x_j\in N^A$, $y_j\in M$, $a_j\in A$ and $z_j\in P$. This implies that
\begin{eqnarray*}
\sum_i m_i\ot n_i\ot p_i&=&
\sum_j y_ja_j\ot x_j \ot z_j-y_j\ot a_jx_j \ot z_j\\
&+& y_j\ot x_ja_j \ot z_j-y_j\ot x_j \ot a_jz_j,
\end{eqnarray*}
where we used the fact that $x_j\in N^A$. This implies that $\sum_i m_i\ot_A n_i\ot_A p_i=0~{\rm in}~M\ot_AN\ot_A P.$
\end{proof}

\begin{theorem}\thlabel{3.9}
Let $H$ be a Hopf category, and let $A$ be 
an $H$-Galois category extension of $B=A^{{\rm co}H}$.
Then the functor $P:\ \Desc_B(A)\to \Mm_k(X)_A^H$ from \prref{3.4} is an isomorphism of categories.
\end{theorem}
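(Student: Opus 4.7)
The plan is to construct an explicit inverse $Q:\ \Mm_k(X)_A^H\to \Desc_B(A)$ to $P$, built from the translation maps $\gamma_{xy}:\ H_{xy}\to A_{yx}\ot_{B_x}A_{xy}$ provided by \thref{3.5}(3). For $(M,\rho)\in \Mm_k(X)_A^H$, the natural candidate is
$$\sigma_{xy}(m):=\sum_i m_{[0]}l_i(m_{[1]})\ot_{B_x}r_i(m_{[1]})\in M_{xx}\ot_{B_x}A_{xy}.$$
Conceptually, $\sigma_{xy}=\can_M^{-1}\circ\rho_{xy}$, where the module-level canonical map $\can_M:\ M_{xx}\ot_{B_x}A_{xy}\to M_{xy}\ot H_{xy}$ is $n\ot a\mapsto na_{[0]}\ot a_{[1]}$; it has inverse $m\ot h\mapsto \sum_i ml_i(h)\ot_{B_x}r_i(h)$ by \equref{3.5.1}, and both maps intertwine right multiplication by $A$ on the second factor with the diagonal $A$-action $(m'\ot h)\cdot a=m'a_{[0]}\ot ha_{[1]}$ on $M_{xy}\ot H_{xy}$.

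I would then verify that $(M,\sigma)$ is a descent datum. Equation \equref{2.1.3} is immediate from \equref{3.7.4}. Equation \equref{2.1.1} follows from the right $A$-linearity of $\can_M^{-1}$ together with the Hopf module condition \equref{3.1.2}; concretely, one applies the auxiliary map $A_{yy}\ot_{B_y}A_{yz}\to M_{xx}\ot_{B_x}A_{xz}$, $u\ot v\mapsto \sum_i m_{[0]}ul_i(m_{[1]})\ot r_i(m_{[1]})v$, which is well-defined by the $B_y$-balancedness \equref{3.7.2}, to the identity \equref{3.5.2}. The hard step is \equref{2.1.2}: since the canonical map is bijective at the $(x,x)$-slot, applying $\can_M^{xx}\ot_{B_x}\id_{A_{xy}}$ reduces the claim to
$$\rho_{xx}(m_{<0>})\ot_{B_x}m_{<1>}=m_{<0>}\ot 1_x\ot_{B_x}m_{<1>}.$$
Expanding the left-hand side via \equref{3.1.2} and Hopf module coassociativity, and writing $h=m_{[1]}$, this further reduces to the identity
$$\sum_i l_i(h_{(2)})_{[0]}\ot h_{(1)}l_i(h_{(2)})_{[1]}\ot_{B_x}r_i(h_{(2)})=\sum_i l_i(h)\ot 1_x\ot_{B_x}r_i(h)$$
in $A_{yx}\ot H_{xx}\ot_{B_x}A_{xy}$. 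This is where the antipode becomes indispensable, which explains why the hypothesis that $H$ is a Hopf category (rather than merely semi-Hopf) is used: apply \equref{3.7.5} to $h_{(2)}$ to replace $l_i(h_{(2)})_{[0]}\ot l_i(h_{(2)})_{[1]}$ by $l_j(h_{(2)(2)})\ot S_{xy}(h_{(2)(1)})$, then invoke coassociativity of $\Delta$ and the antipode relation $h_{(1)(1)}S_{xy}(h_{(1)(2)})=\varepsilon_{xy}(h_{(1)})1_x$ from \equref{1.1.1}, and finish with the counit axiom.

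For the mutual inverseness, $P\circ Q=\id$ follows since $\rho'_{xy}(m)=\sum_i m_{[0]}l_i(m_{[1]})r_i(m_{[1]})_{[0]}\ot r_i(m_{[1]})_{[1]}=m_{[0]}\ot m_{[1]}$ by \equref{3.5.1}, and $Q\circ P=\id$ follows since $\sigma'_{xy}(m)=\sum_i m_{<0>}m_{<1>[0]}l_i(m_{<1>[1]})\ot_{B_x}r_i(m_{<1>[1]})=m_{<0>}\ot_{B_x}m_{<1>}$ by \equref{3.5.2} applied with $a=m_{<1>}$. For the morphism axiom \equref{2.1.4}, both sides equal $\sum_i f_{xy}(m_{[0]})l_i(m_{[1]})\ot_{B_x}r_i(m_{[1]})$: the left-hand side by right $A$-linearity of $f$, and the right-hand side by $H_{xy}$-colinearity of $f$. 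Thus $Q$ is functorial and two-sided inverse to $P$, proving that $P$ is an isomorphism of categories.
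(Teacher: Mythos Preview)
Your construction of $Q$ and the verifications of $P\circ Q=\id$, $Q\circ P=\id$, and the morphism axiom \equref{2.1.4} coincide with the paper's. The difference lies in how you check the descent-datum axioms for $(M,\sigma)$. The paper proves \equref{2.1.1} and \equref{2.1.2} via \leref{3.8}, a technical device that moves elements of $N^A$ across tensor products over $A$, combined with \equref{3.7.1}, \equref{3.7.2} and \equref{3.7.5}. You instead organize the argument around the bijectivity of the module-level canonical maps $\can_M$: for \equref{2.1.1} you exploit that the family $\can_M$ intertwines the right $A$-actions (so its inverse does too), and for \equref{2.1.2} you postcompose with the isomorphism $\can_M^{xx}\ot_{B_x}\id_{A_{xy}}$ to reduce to an identity in $(A_{yx}\ot H_{xx})\ot_{B_x}A_{xy}$ that follows directly from \equref{3.7.5} and the antipode relation \equref{1.1.1}. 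Both routes are correct; yours is more conceptual and sidesteps \leref{3.8} entirely, at the price of having to check that the relevant maps descend to tensor products over $B_x$ and $B_y$ (which they do, by \equref{3.7.2} and the fact that coactions restrict trivially on coinvariants).
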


\begin{proof}
We define a functor $Q:\ \Mm_k(X)_A^H\to \Desc_B(A)$. For a relative Hopf module $(M,\rho)$, consider
$$\sigma_{xy}=(\psi_{x,y,x}\ot_{B_x}A_{xy})\circ (M_{xy}\ot \gamma_{xy})\circ \rho_{xy}:\
M_{xy}\to M_{xx}\ot_{B_x}A_{xy},$$
that is,
$$\sigma_{xy}(m)=m_{<0>}\ot_{B_x}m_{<1>}=m_{[0]}\gamma_{xy}(m_{[1]})=\sum_i m_{[0]}l_i(m_{[1]})\ot_{B_x}r_i(m_{[1]}).$$
We claim that $(M,\sigma)\in \Desc_B(A)$. We will first show that \equref{2.1.1} holds, that is
$$\sigma_{xz}(ma)=m_{<0>}\ot_{B_x} m_{<1>}a,$$
for all $m\in M_{xy}$ and $a\in A_{yz}$. It follows from \equref{3.5.2} that
$$\sum_ia_{[0]}l_i(a_{[1]}) \ot_{B_y} r_i(a_{[1]})= 1_y\ot_{B_y} a.$$
From \equref{3.7.2}, we know that $\gamma_{xy}(h)\in (A_{yx}\ot_{B_x}A_{xy})^{B_y}$, for all $h\in H_{xy}$.
Therefore we have that
$$\gamma_{xy}(h)\ot \sum_ia_{[0]}l_i(a_{[1]}) \ot_{B_y} r_i(a_{[1]})=
\gamma_{xy}(h)\ot 1_y\ot_{B_y} a.$$
in $(A_{yx}\ot_{B_x}A_{xy})^{B_y}\ot A_{xy}\ot_{B_y} A_{xy}$. From \leref{3.8}, it follows that
$$\sum_ia_{[0]}l_i(a_{[1]})\ot_{B_y} \gamma_{xy}(h)
 \ot_{B_y} r_i(a_{[1]})= 1_y\ot_{B_y} \gamma_{xy}(h)\ot_{B_y} a$$
in $A_{xy}\ot_{B_y} A_{yx}\ot_{B_x}A_{xy} \ot_{B_y} A_{yx}$. Multiplying the two first and the two last tensor
factors, we find that
\begin{equation}\eqlabel{3.9.1}
\sum_{i,j} a_{[0]}l_i(a_{[1]})l_j(h)\ot_{B_x} r_j(h)r_i(a_{[1]})= \sum_j l_j(h)\ot_{B_x} r_j(h)a.
\end{equation}
Finally
\begin{eqnarray*}
\sigma_{xz}(ma)&=&
\sum_i m_{[0]}a_{[0]}l_i(m_{[1]}a_{[1]})\ot_{B_x}r_i(m_{[1]}a_{[1]})\\
&\equal{\equref{3.7.1}}& 
\sum_{i,j} m_{[0]}a_{[0]}l_i(a_{[1]})l_j(m_{[1]}) \ot_{B_x} r_j(m_{[1]})r_i(a_{[1]})\\
&\equal{\equref{3.9.1}}&
\sum_j m_{[0]}1_y l_j(m_{[1]}) \ot_{B_x} r_j(m_{[1]})a=\sigma_{xy}(m)a,
\end{eqnarray*}
and \equref{2.1.1} follows. Our next aim is to show that \equref{2.1.2} holds. Take $m\in M_{xy}$. It
follows from \equref{3.7.5} that
\begin{eqnarray*}
&&\hspace*{-2cm}
m_{[0]}\ot m_{[1]}\ot \gamma_{xy}(m_{[3]}) \ot S_{xy}(m_{[2]})\\
&=&m_{[0]}\ot m_{[1]}\ot l_i(m_{[2]})_{[0]}\ot_{B_x} r_i(m_{[2]})\ot l_i(m_{[2]})_{[1]},
\end{eqnarray*}
and
\begin{eqnarray*}
&&\hspace*{-15mm}
m_{[0]}\ot m_{[1]}S_{xy}(m_{[2]})\ot \gamma_{xy}(m_{[3]})
= \sum_j m_{[0]}\ot 1_x^H\ot l_j(m_{[1]})\ot_{B_x} r_j(m_{[1]})\\
&=& m_{[0]}\ot m_{[1]}l_i(m_{[2]})_{[1]} \ot l_i(m_{[2]})_{[0]}\ot_{B_x} r_i(m_{[2]}).
\end{eqnarray*}
Now we apply $\gamma_{xx}:\ H_{xx}\to (A_{xx}\ot_{B_x} A_{xx})^{B_x}$ (see \equref{3.7.2})
to the second tensor factor. Observing that $\gamma_{xx}(1_x^H)=1_x^A\ot_{B_x}1_x^A$,
this gives us the following equality in
$A_{xy}\ot (A_{xx}\ot_{B_x} A_{xx})^{B_x} \ot_{B_y} A_{yx}\ot_{B_x} A_{xy}$:
\begin{eqnarray*}
&&\hspace*{-15mm}
\sum_j m_{[0]}\ot 1_x^A\ot_{B_x}1_x^A \ot l_j(m_{[1]})\ot_{B_x} r_j(m_{[1]})\\
&=& \sum_i m_{[0]}\ot \gamma_{xx}(m_{[1]}l_i(m_{[2]})_{[1]}) \ot l_i(m_{[2]})_{[0]}\ot_{B_x} r_i(m_{[2]}).
\end{eqnarray*}
Now we apply \leref{3.8}, and obtain the equality
\begin{eqnarray*}
&&\hspace*{-15mm}
\sum_j m_{[0]}\ot l_j(m_{[1]})\ot_{B_x} 1_x^A\ot_{B_x}1_x^A \ot_{B_x} r_j(m_{[1]})\\
&=&\sum_i m_{[0]}\ot l_i(m_{[2]})_{[0]}\ot_{B_x} \gamma_{xx}(m_{[1]}l_i(m_{[2]})_{[1]})\ot_{B_x} r_i(m_{[2]})
\end{eqnarray*}
in $A_{xy}\ot A_{yx}\ot_{B_x} A_{xx}\ot_{B_x} A_{xx}\ot_{B_x} A_{xy}$. Multiplying the first three tensor
factors, we obtain that
\begin{eqnarray*}
&&\hspace*{-15mm}
\sum_j m_{[0]} l_j(m_{[1]})\ot_{B_x}1_x^A \ot_{B_x} r_j(m_{[1]})\\
&=&\sum_i m_{[0]} l_i(m_{[2]})_{[0]} \gamma_{xx}(m_{[1]}l_i(m_{[2]})_{[1]})\ot_{B_x} r_i(m_{[2]}),
\end{eqnarray*}
which is precisely \equref{2.1.2}. \equref{2.1.3} follows easily:
$$m_{<0>}m_{<1>}=\sum_i m_{[0]}l_i(m_{[1]})r_i(m_{[1]})\equal{\equref{3.7.4}} m_{[0]}\varepsilon_{xy}(m_{[1]})1_y=m.$$
We now define $Q(M,\rho)=(M,\sigma)$. If $f:\ (M,\rho)\to (M',\rho')$ is a morphism in $\Mm_k(X)_A^H$, then
it is also a morphism $(M,\sigma)\to (M',\sigma')$ in $\Desc_B(A)$. Indeed, for all $m\in M_{xy}$, we have that
\begin{eqnarray*}
&&\hspace*{-2cm}
f_{xx}(m_{<0>})\ot_{B_x}m_{<1>}
= \sum_i f_{xx}(m_{[0]}l_i(m_{[1]}))\ot_{B_x}r_i(m_{[1]})\\
&=& \sum_i f_{xx}(m_{[0]})l_i(m_{[1]})\ot_{B_x}r_i(m_{[1]})\\
&=& \sum_i (f_{xx}(m))_{[0]}l_i((f_{xx}(m))_{[1]})\ot_{B_x}r_i((f_{xx}(m))_{[0]})\\
&=&\sigma'_{xy}(f_{xy}(m)),
\end{eqnarray*}
so that \equref{2.1.4} holds. We now define $Q(f)=f$.\\
Take $(M,\sigma)\in \Desc_B(A)$, and let $(Q\circ P)(M,\sigma)=(M,\sigma')$. Then for all $m\in M_{xy}$,
we have that
\begin{eqnarray*}
\rho_{xy}(m)&=&m_{<0>}m_{<1>[0]}\ot m_{<1>[1]};\\
\sigma'_{xy}(m)&=& \sum_i m_{<0>}m_{<1>[0]}l_i(m_{<1>[1]}) \ot_{B_x} r_i(m_{<1>[1]})\\
&\equal{\equref{3.5.2}}& \sum_i m_{<0>}1_x \ot_{B_x} m_{<1>}=\sigma_{xy}(m)
\end{eqnarray*}
Finally take $(M,\rho)\in \Mm_k(X)_A^H$, and let $(P\circ A)(M,\rho)=(M,\rho')$. Then for all
$m\in M_{xy}$
\begin{eqnarray*}
\sigma_{xy}(m)&=&\sum_i m_{[0]}l_i(m_{[1]})\ot_{B_x}r_i(m_{[1]});\\
\rho'_{xy}(m)&=&\sum_i m_{[0]}l_i(m_{[1]})r_i(m_{[1]})_{[0]}\ot r_i(m_{[1]})_{[1]}\\
&\equal{\equref{3.5.1}}& m_{[0]}1_y \ot m_{[1]}= \rho_{xy}(m).
\end{eqnarray*}
This shows that $Q$ is the inverse of $P$.
\end{proof}

\subsection{Left relative Hopf modules}\selabel{3.2}
As in \seref{3.1}, let $H$ be a $k$-linear semi-Hopf category, and let $A$ be a right $H$-comodule category.
We introduce left $(A,H)$-relative Hopf modules. The results of \seref{3.1} have their counterparts for
left $(A,H)$-relative Hopf modules. The proofs are similar, so we restrict to a brief survey of the results.
 A left $(A,H)$-relative Hopf module is an object $M\in {}_A\Mm_k(X)$ such that every $M_{xy}$ is a right
 $H_{xy}$-comodule satisfying the compatibility relations
 $$\rho_{xz}(am)=a_{[0]}m_{[0]}\ot a_{[1]}m_{[1]},$$
 for all $a\in A_{xy}$ and $m\in M_{yz}$. The category of left $(A,H)$-relative Hopf modules is denoted
 as ${}_A\Mm_k(X)^H$. As in \seref{3.1}, we assume that $B$ is an algebra in $\Dd_k(X)$, and that $i:\ B\to A^{{\rm co}H}$
 is an algebra morphism.
 
 \begin{proposition}\prlabel{3.10}
 We have a pair of adjoint functors $(F',G')$ between ${}_B\Dd_k(X)$ and ${}_A\Mm_k(X)^H$.
 \end{proposition}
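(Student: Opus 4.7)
The plan is to mirror the construction from \prref{3.2} on the other side, swapping the roles of the two indices throughout. For $N\in {}_B\Dd_k(X)$, I would set
$$F'(N)_{xy}=A_{xy}\ot_{B_y}N_y,$$
with left $A$-action $a'(a\ot n)=a'a\ot n$ on the $A$-factor and right $H$-coaction
$$\rho_{xy}(a\ot n)=a_{[0]}\ot n\ot a_{[1]}$$
on the $H$-factor. The first step is to verify that this really lands in ${}_A\Mm_k(X)^H$: the coaction is well-defined on the tensor product over $B_y$ because $B_y\subseteq A_{yy}^{{\rm co}H}$ via $i$, the coassociativity and counit axioms are inherited from $A_{xy}$, and the compatibility
$$\rho_{xz}\bigl(a'(a\ot n)\bigr)=(a'a)_{[0]}\ot n\ot (a'a)_{[1]}=a'_{[0]}a_{[0]}\ot n\ot a'_{[1]}a_{[1]}$$
matches $a'_{[0]}(a\ot n)_{[0]}\ot a'_{[1]}(a\ot n)_{[1]}$ on the nose. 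On morphisms, $F'(g)_{xy}=A_{xy}\ot_{B_y}g_y$.

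In the opposite direction, for $M\in {}_A\Mm_k(X)^H$, I would define
$$G'(M)_y=M_{yy}^{{\rm co}H_{yy}}.$$
Exactly as in \prref{3.2}, each $A^{{\rm co}H}_y$ is a $k$-algebra, so $G'(M)_y$ is a left $A^{{\rm co}H}_y$-module, and hence a left $B_y$-module by restriction of scalars along $i_y:B_y\to A^{{\rm co}H}_y$. A morphism $f:M\to M'$ in ${}_A\Mm_k(X)^H$ restricts and corestricts on the diagonal to a morphism $f_{yy}^{{\rm co}H_{yy}}:G'(M)_y\to G'(M')_y$ because the coaction is preserved, giving the functoriality of $G'$.

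Next I would write down the unit and counit. The unit
$$\eta'^N_y:\ N_y\to G'F'(N)_y=(A_{yy}\ot_{B_y}N_y)^{{\rm co}H_{yy}},\quad \eta'^N_y(n)=1_y\ot n,$$
is well-defined because $\rho_{yy}(1_y\ot n)=1_y\ot n\ot 1_y^H$, and it is clearly $B_y$-linear. The counit
$$\varepsilon'^M_{xy}:\ F'G'(M)_{xy}=A_{xy}\ot_{B_y}M^{{\rm co}H}_y\to M_{xy},\quad \varepsilon'^M_{xy}(a\ot m)=am,$$
is well-defined over $B_y$ since $B_y$ acts through $A_{yy}^{{\rm co}H}$, and is easily seen to be left $A$-linear and right $H$-colinear using the relative Hopf module compatibility.

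The last step is to check the two triangle identities, $G'(\varepsilon'^M)\circ\eta'^{G'(M)}=\id_{G'(M)}$ and $\varepsilon'^{F'(N)}\circ F'(\eta'^N)=\id_{F'(N)}$; both reduce componentwise to the identities $1_y\cdot m=m$ and $a\cdot(1_y\ot n)=a\ot n$, exactly as in the right-handed case. No obstacle is anticipated: the argument is entirely parallel to \prref{3.2}, and the only point requiring a moment of thought is that the balancing in $A_{xy}\ot_{B_y}N_y$ is compatible with the $H$-coaction, which follows from $i(B_y)\subseteq A^{{\rm co}H}_{yy}$.
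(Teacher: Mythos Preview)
Your proposal is correct and follows exactly the approach of the paper's proof, which is itself only a brief sketch of the functors and the unit/counit. In fact your version is more careful: the paper's displayed formula for the unit reads $\eta^N_x(n)=n\ot_{B_x}1_x$, which is a typo (the factors are in the wrong order for $A_{xx}\ot_{B_x}N_x$), and the domain of the counit has the index $xx$ where it should be $yy$; your $\eta'^N_y(n)=1_y\ot n$ and $\varepsilon'^M_{xy}:A_{xy}\ot_{B_y}M^{{\rm co}H}_y\to M_{xy}$ are the intended maps.
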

 
 \begin{proof}
 For $N\in {}_B\Dd_k(X)$, $F(N)_{xy}=A_{xy}\ot N_y$, with action and coaction given by the formulas
 $$a'(a\ot n)= a'a\ot n~~~{\rm and}~~~\rho_{xy}(a\ot n)=a_{[0]}\ot n\ot a_{[1]},$$
 for all $a'\in A_{ux}$, $a\in A_{xy}$ and $n\in N_y$. For a relative Hopf module $M$,
 $G'(M)=M^{{\rm co}H}$. The unit $\eta^N:\ N\to G'F'(N)$ and the counit $\varepsilon^M:\ F'G'(M)\to M$ are
 given by the formulas
$$ \begin{array}{lccl}
\eta^N_x:\ N_x\to (A_{xx}\ot_{B_x} N_x)^{{\rm co}H_{xx}}&;&\eta^N_x(n)=n\ot_{B_x}1_x;\\
\varepsilon^M_{xy}:\ A_{xy}\ot_{B_y} M^{{\rm co}H_{xx}}_{xx}&;& \varepsilon^M_{xy}(a\ot_{B_y}m)=am.
\end{array}$$
\end{proof}

For all $y\in X$, we consider the relative Hopf module $M^y$, $M^y_{zx}= A_{zy}\ot H_{zx}$, with action
and coaction given by the formulas
$$a'(a\ot h)=a'_{[0]}a\ot a'_{[1]}h~~~{\rm and}~~~\rho_{zx}(a\ot h)=a\ot h_{(1)}\ot h_{(2)},$$
for $a'\in A_{uz}$, $a\in A_{zy}$ and $h\in H_{zx}$. We have an isomorphism
$$f:\ A_{xy}\to \bigl(M^y_{xx}\bigr)^{{\rm co}H_{xx}},~~~f(a)=a\ot 1_x,$$
with inverse given by the formula $f^{-1}(\sum_i a_i\ot h_i)=\sum_i a_i\varepsilon_{xx}(h_i)$. Now observe that the composition
$$\can'^{y}_{zx}=\varepsilon^{M^y}_{zx}\circ (A_{zx}\ot_{B_x}f):\ A_{zx}\ot_{B_x}A_{xy}\to M^y_{zx}= A_{zy}\ot H_{zx},$$
is given by the formula
\begin{equation}\eqlabel{3.11.0}
\can'^{y}_{zx}(a\ot_{B_x}a')=a_{[0]}a'\ot a_{[1]}.
\end{equation}

 \begin{proposition}\prlabel{3.11}
 If $F'$ is fully faithful, then $i:\ B\to A^{{\rm co}H}$ is an isomorphism; If $F'$ is fully faithful, then $\can'^{y}_{zx}$
 is an isomorphism, for all $x,y,z\in X$.
 \end{proposition}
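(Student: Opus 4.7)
The plan is to mirror the proof of \prref{3.3}, adapting each step to the left-module setting. Much of the work is already done in the preamble preceding the statement: the left relative Hopf modules $M^y$ have been constructed, the map $f:\ A_{xy}\to (M^y_{xx})^{{\rm co}H_{xx}}$ has been shown to be an isomorphism, and the composite $\varepsilon^{M^y}_{zx}\circ (A_{zx}\ot_{B_x} f)$ has been identified with $\can'^{y}_{zx}$ via \equref{3.11.0}. So the proposition really reduces to specializing the unit and counit at well-chosen objects.

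For the first half of the statement, I would argue exactly as in \prref{3.3}(1): if $F'$ is fully faithful, then every unit component $\eta^N_x$ is bijective. Specialize to $N=B$, viewed as an object of ${}_B\Dd_k(X)$. Then $G'F'(B)_x=(A_{xx}\ot_{B_x} B_x)^{{\rm co}H_{xx}}$, and under the canonical isomorphism $A_{xx}\ot_{B_x} B_x\cong A_{xx}$, the object $G'F'(B)_x$ is identified with $A^{{\rm co}H}_x$ while $\eta^B_x$ is identified with $i_x:\ B_x\to A^{{\rm co}H}_x$. Hence $i_x$ is an isomorphism for every $x\in X$.

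For the second half (read ``$F'$'' as ``$G'$'', in parallel with \prref{3.3}(2)): if $G'$ is fully faithful, then every counit component $\varepsilon^M_{xy}$ is bijective, for every $M\in {}_A\Mm_k(X)^H$. Specialize to $M=M^y$. Then
$$\varepsilon^{M^y}_{zx}:\ A_{zx}\ot_{B_x}(M^y_{xx})^{{\rm co}H_{xx}}\to M^y_{zx}= A_{zy}\ot H_{zx}$$
is an isomorphism; since $f$, and therefore $A_{zx}\ot_{B_x} f$, is an isomorphism, the composition
$$\can'^{y}_{zx}=\varepsilon^{M^y}_{zx}\circ (A_{zx}\ot_{B_x} f)$$
is an isomorphism as well.

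I do not expect any serious obstacle here. The only real bookkeeping is to confirm that the canonical identification $A_{xx}\ot_{B_x} B_x\cong A_{xx}$ carries $\eta^B_x$ to $i_x$, and that the $M^y$ constructed in the preamble has precisely the structure needed so that \equref{3.11.0} applies, both of which are direct unwindings of the definitions.
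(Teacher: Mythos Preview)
Your proposal is correct and follows exactly the approach the paper intends: the paper omits the proof of \prref{3.11} but sets up in the preamble precisely the ingredients you use (the modules $M^y$, the isomorphism $f$, and the identification \equref{3.11.0}), so that the argument becomes a direct transcription of the proof of \prref{3.3}. Your reading of the second hypothesis as ``$G'$ fully faithful'' rather than ``$F'$ fully faithful'' is also the correct fix of an evident typo in the statement.
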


\begin{theorem}\thlabel{3.12}
Let $H$ be a $k$-linear semi-Hopf category, let $A$ be a right $H$-comodule category, and let $B=A^{{\rm co}H}$.
Then the following assertions are equivalent.
\begin{enumerate}
\item $\can'^{y}_{zx}$ is bijective, for all $x,y,z\in X$;
\item $\can'^{z}_{zx}$ is bijective and $\can'^{x}_{zx}$ has a left inverse $g'_{zx}$, for all $x,z\in X$;
\item for all $x,z\in X$, there exists $\gamma'_{zx}:\ H_{zx}\to A_{zx}\ot_{B_x} A_{xz}$, notation
$$\gamma'_{zx}(h)=\sum_i l'_i(h)\ot_{B_x} r'_i(h),$$
such that
\begin{eqnarray}
\sum_i l'_i(h)_{[0]}r'_i(h) \ot l'_i(h)_{[1]}&=& 1_z\ot h;\eqlabel{3.11.1}\\
\sum_i l'_i(a_{[1]}) \ot_{B_x} r'_i(a_{[1]}) a_{[0]}&=& a\ot_{B_x} 1_x,\eqlabel{3.11.2}
\end{eqnarray}
for all $h\in H_{zx}$ and $a\in A_{zx}$.
\end{enumerate}
If these equivalent conditions are satisfied, then we call $A$ an $H$-Galois' category extension of $B=A^{{\rm co}H}$.
\end{theorem}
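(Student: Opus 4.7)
The plan is to follow the same three-step cycle $(1)\Rightarrow(2)\Rightarrow(3)\Rightarrow(1)$ used in \thref{3.5}, with all arrows, actions and coactions transposed to account for the fact that $\can'^y_{zx}$ coacts on the first tensor factor instead of the second.

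The implication $(1)\Rightarrow(2)$ is immediate. For $(2)\Rightarrow(3)$, I would define
$$\gamma'_{zx}(h):=(\can'^z_{zx})^{-1}(1_z\ot h)=\sum_i l'_i(h)\ot_{B_x}r'_i(h).$$
Applying $\can'^z_{zx}$ to both sides gives \equref{3.11.1} directly. Next, mimicking the construction in \thref{3.5}, I introduce the candidate inverse
$$f'^y_{zx}:\ A_{zy}\ot H_{zx}\to A_{zx}\ot_{B_x}A_{xy},~~f'^y_{zx}(a\ot h)=\sum_i l'_i(h)\ot_{B_x} r'_i(h)a.$$
A short computation using \equref{3.11.1} shows $\can'^y_{zx}\circ f'^y_{zx}=\id$, so $f'^y_{zx}$ is a right inverse of $\can'^y_{zx}$ for every $y$. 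Specializing $y=x$ and combining with the hypothesis that $\can'^x_{zx}$ admits a left inverse $g'_{zx}$ yields $g'_{zx}=g'_{zx}\circ\can'^x_{zx}\circ f'^x_{zx}=f'^x_{zx}$, so $f'^x_{zx}$ is a two-sided inverse. Applying $f'^x_{zx}$ to $\can'^x_{zx}(a\ot_{B_x}1_x)=a_{[0]}\ot a_{[1]}$ and equating the result with $a\ot_{B_x}1_x$ produces \equref{3.11.2}.

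For $(3)\Rightarrow(1)$, I use the given $\gamma'_{zx}$ to define $f'^y_{zx}$ by the same formula as above. The equality $\can'^y_{zx}\circ f'^y_{zx}=\id$ again reduces to \equref{3.11.1}. For the other direction,
$$(f'^y_{zx}\circ\can'^y_{zx})(a\ot_{B_x}a')=f'^y_{zx}(a_{[0]}a'\ot a_{[1]})=\sum_i l'_i(a_{[1]})\ot_{B_x}r'_i(a_{[1]})a_{[0]}a',$$
and the sum on the right equals $(a\ot_{B_x}1_x)a'=a\ot_{B_x}a'$ by \equref{3.11.2}.

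The only subtle point, and the place where one has to be careful rather than literally copy \thref{3.5}, is the type-bookkeeping: since in the left case the coaction lives on the first tensor factor, the roles of the two factors in $\gamma'_{zx}(h)\in A_{zx}\ot_{B_x}A_{xz}$ (note: both indices land in $A_{zx}$ and $A_{xz}$, not in $A_{yx}$ and $A_{xy}$ as in \thref{3.5}) must be tracked through every calculation, and the $B_x$-balanced tensor must be formed over the correct side. Once this is set up, the algebraic manipulations are entirely parallel to those already carried out for \thref{3.5}, so no new ideas are required.
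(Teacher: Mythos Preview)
Your proposal is correct and follows exactly the same approach as the paper: define $\gamma'_{zx}(h)=(\can'^{z}_{zx})^{-1}(1_z\ot h)$ for $(2)\Rightarrow(3)$, and for $(3)\Rightarrow(1)$ exhibit $(\can'^{y}_{zx})^{-1}(a\ot h)=\sum_i l'_i(h)\ot_{B_x} r'_i(h)a$, precisely your $f'^y_{zx}$. The paper's own proof is a terse two-line sketch giving only these definitions, so your write-up in fact supplies the details the paper omits.
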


\begin{proof}
$\ul{(2)\Rightarrow (3)}$. For $h\in H_{zx}$, we define
$$\gamma'_{zx}(h)=\bigl(\can'^{z}_{zx}\bigr)^{-1}(1_z\ot h).$$
$\ul{(2)\Rightarrow (3)}$. For $a\in A_{zy}$ and $h\in H_{zx}$, we define
$$\bigl(\can'^{y}_{zx}\bigr)^{-1}(a\ot h)=\sum_i l'_i(h)\ot_{B_x} r'_i(h)a.$$
\end{proof}

\begin{theorem}\thlabel{3.13}
Let $H$ be a $k$-linear semi-Hopf category, let $A$ be a right $H$-comodule category, and let $B=A^{{\rm co}H}$.
We have a functor $P':\ {}_B\Desc(A)\to {}_A\Mm_k(X)^H$. If $A$ an $H$-Galois' category extension of $B$,
then $P'$ is an isomorphism of categories.
\end{theorem}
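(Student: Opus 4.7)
The plan is to mirror the constructions and arguments of \prref{3.4} and \thref{3.9} on the left-hand side, using the symmetry between the left and right settings.

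To construct $P'$, for a left descent datum $(M,\tau)\in {}_B\Desc(A)$ with $\tau_{xy}(m)=m_{<-1>}\ot_{B_y}m_{<0>}$, I define the right $H_{xy}$-coaction
$$\rho_{xy}(m)=m_{<-1>[0]}m_{<0>}\ot m_{<-1>[1]},$$
exploiting that $m_{<-1>}\in A_{xy}$ carries the $H_{xy}$-coaction coming from the comodule category structure. Coassociativity follows by applying \equref{2.6.1} (with $a=m_{<-1>[0]}$) and then \equref{2.6.2}; the counit property follows from \equref{2.6.3}; the compatibility $\rho_{xz}(am)=a_{[0]}m_{[0]}\ot a_{[1]}m_{[1]}$ with the left $A$-action follows from \equref{2.6.1} together with the multiplicativity of the coaction on $A$. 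A morphism of descent data is automatically $H$-colinear with respect to these coactions, by a computation analogous to the final one in the proof of \prref{3.4}, so I set $P'(f)=f$.

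For the inverse $Q'$ under the Galois' hypothesis, I first derive left analogs of the identities of \prref{3.7} for $\gamma'_{zx}(h)=\sum_i l'_i(h)\ot_{B_x}r'_i(h)$: a multiplicativity formula for $\gamma'$ on a product, the centrality $\gamma'_{zx}(h)\in(A_{zx}\ot_{B_x}A_{xz})^{B_z}$, a compatibility with $\Delta$ expressing $\gamma'_{zx}(h_{(1)})\ot h_{(2)}$ through the coaction on the left factor $l'_i(h)$, the counit identity $\sum_i l'_i(h)r'_i(h)=\varepsilon_{zx}(h)1_z$, and (when $H$ is a Hopf category) an antipode identity for $\gamma'_{zx}(h_{(2)})\ot S_{zx}(h_{(1)})$. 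Each is obtained by pairing with $\can'^{y}_{zx}$ or an extension thereof and invoking \equref{3.11.1}--\equref{3.11.2}, in complete analogy with \prref{3.7}. Then for $(M,\rho)\in {}_A\Mm_k(X)^H$ I set
$$\tau_{xy}(m)=\sum_i l'_i(m_{[1]})\ot_{B_y}r'_i(m_{[1]})m_{[0]}.$$

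It remains to verify the descent axioms \equref{2.6.1}--\equref{2.6.3} for $(M,\tau)$ and that $P'$ and $Q'$ are mutually inverse. Axiom \equref{2.6.1} follows from the multiplicativity and centrality of $\gamma'$ combined with a direct calculation; axiom \equref{2.6.3} is immediate from the counit identity. The main obstacle, exactly as in \thref{3.9}, is \equref{2.6.2}: the argument requires the antipode identity for $\gamma'$ together with a left-module version of \leref{3.8} (which holds by the symmetric proof) in order to insert the factor $1_y$ into the middle tensor slot. Once the axioms are checked, the identities $Q'\circ P'=\id$ and $P'\circ Q'=\id$ reduce to direct applications of \equref{3.11.2} and \equref{3.11.1} respectively, so $P'$ is an isomorphism of categories.
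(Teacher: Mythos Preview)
Your proposal is correct and follows the same approach as the paper: the paper's proof of \thref{3.13} is a terse summary giving exactly the formulas $\rho_{xy}(m)=m_{<-1>[0]}m_{<0>}\ot m_{<-1>[1]}$ for $P'$ and $\tau_{zx}(m)=\sum_i l'_i(m_{[1]})\ot_{B_x} r'_i(m_{[1]})m_{[0]}$ for $Q'$, relying on the reader to transport the detailed verifications from \prref{3.4} and \thref{3.9} to the left-handed setting, which is precisely what you outline. Your remark that the verification of \equref{2.6.2} needs the antipode identity for $\gamma'$ (hence $H$ a Hopf category, as in \thref{3.9}) is well taken; the paper's statement of \thref{3.13} says ``semi-Hopf'', but the intended proof, mirroring \thref{3.9}, indeed uses the antipode.
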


\begin{proof}
For a descent datum $(M,\tau)$, we define $P'(M,\tau)=(M,\rho)$, with
$$\rho_{xy}(m)=m_{<-1>[0]}m_{<0>}\ot m_{<-1>[1]},$$
for $m\in M_{xy}$. Assume that  $A$ an $H$-Galois' category extension of $B$. For
$(M,\rho)\in {}_A\Mm_k(X)^H_A$, define $Q'(M,\rho)=(M,\tau)$, with
$$\tau_{zx}:\ M_{zx}\to A_{zx}\ot_{B_x} M_{xx},~~~\tau_{zx}(m)=\sum_i l'_i(m_{[1]})\ot_{B_x} r'_i(m_{[1]})m_{[0]}.$$
Then $Q'$ is the inverse of $P'$.
\end{proof}

\begin{theorem}\thlabel{3.14}
Let $H$ be a $k$-linear Hopf category with bijective antipode, let $A$ be a right $H$-comodule category, and let $B=A^{{\rm co}H}$.
Then $A$ is an $H$-Galois' category extension of $B$ if and only if $A$ an $H$-Galois category extension of $B$.
\end{theorem}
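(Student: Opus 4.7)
The strategy is to exploit bijectivity of the antipode to convert the translation map $\gamma$ of the $H$-Galois condition into the translation map $\gamma'$ of the $H$-Galois' condition (and vice versa). I will sketch only the direction $H$-Galois $\Rightarrow$ $H$-Galois'; the converse is entirely symmetric. Assuming $A$ is $H$-Galois with translation maps $\gamma_{xy}(h)=\sum_i l_i(h)\ot_{B_x}r_i(h)\in A_{yx}\ot_{B_x}A_{xy}$ from \thref{3.5}, my proposal is to define
$$\gamma'_{zx}(h):=\gamma_{xz}(S^{-1}_{xz}(h))\in A_{zx}\ot_{B_x}A_{xz}\quad(h\in H_{zx}),$$
using that $S_{xz}:H_{xz}\to H_{zx}$ is bijective. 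The types of $l'_i,r'_i$ then match what \thref{3.12} requires, so it suffices to verify \equref{3.11.1} and \equref{3.11.2} for this choice.

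For \equref{3.11.1}, the first step is to derive the auxiliary identity
$$\sum_i l_i(h)_{[0]}r_i(h)\ot l_i(h)_{[1]}=1_z\ot S_{xz}(h)\quad(h\in H_{xz})$$
by multiplying the first two tensor factors of \equref{3.7.5} (via $A_{zx}\ot_{B_x}A_{xz}\to A_{zz}$) and invoking \equref{3.7.4}. Substituting $h=S^{-1}_{xz}(k)$ for $k\in H_{zx}$ then gives \equref{3.11.1} at once.

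The main obstacle is \equref{3.11.2}, i.e.\ the identity
$$\sum_i l_i(S^{-1}_{xz}(a_{[1]}))\ot_{B_x} r_i(S^{-1}_{xz}(a_{[1]}))a_{[0]}=a\ot_{B_x}1_x$$
in $A_{zx}\ot_{B_x}A_{xx}$, for $a\in A_{zx}$. My plan is to apply the bijective map $\can^z_{xx}$ to both sides and conclude by its injectivity. Expanding coactions via \equref{3.1.1} together with coassociativity, then using \equref{3.5.1} inside the sum, the image of the left-hand side collapses to $a_{[0]}\ot S^{-1}_{xz}(a_{[1](2)})a_{[1](1)}$. The crucial ingredient is then the Hopf category antipode identity
$$S^{-1}_{xz}(h_{(2)})h_{(1)}=\varepsilon_{zx}(h)1_x\quad(h\in H_{zx}),$$
which I would establish by applying the bijection $S_{xx}$ to the left-hand side and using antimultiplicativity of $S$ combined with the standard axiom \equref{1.1.1} to reduce to $S_{zx}(h_{(1)})h_{(2)}=\varepsilon_{zx}(h)1_x$. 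Together with the counit property of the coaction, this makes $\can^z_{xx}$ of both sides equal to $a\ot 1_x$, and injectivity of $\can^z_{xx}$ closes the argument.

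The converse implication proceeds by the mirror construction $\gamma_{xy}(h):=\gamma'_{yx}(S^{-1}_{yx}(h))$ for $h\in H_{xy}$; it requires the analogs for $\gamma'$ of the identities in \prref{3.7} (derivable by the same methods, with the roles of $l$ and $r$ interchanged and $\can'$ replacing $\can$), after which \equref{3.5.1} and \equref{3.5.2} follow from an entirely parallel calculation using the bijectivity and injectivity of $\can'^{z}_{zx}$.
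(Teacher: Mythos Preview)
Your proposal is correct, but the paper takes a much more direct route. Rather than passing through the translation maps $\gamma$ and $\gamma'$ and verifying the axioms of \thref{3.12}(3), the paper simply relates the canonical maps themselves: for fixed $x,y,z$ one defines a $k$-linear map
\[
\phi:\ A_{zy}\ot H_{xy}\to A_{zy}\ot H_{zx},\qquad \phi(a\ot h)=a_{[0]}\ot a_{[1]}S_{xy}(h),
\]
checks that $\phi$ is bijective precisely because $S$ is, and then verifies the single identity $\can'^{y}_{zx}=\phi\circ \can^{z}_{xy}$ by a two-line computation using \equref{3.1.1} and the antipode axiom. Bijectivity of all $\can^{z}_{xy}$ is then manifestly equivalent to bijectivity of all $\can'^{y}_{zx}$.

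Your argument instead transports the translation map via $\gamma'_{zx}=\gamma_{xz}\circ S_{xz}^{-1}$ and re-derives \equref{3.11.1}--\equref{3.11.2} from the identities of \prref{3.7}; this works, and has the mild advantage of producing the new translation map explicitly, but it costs you the auxiliary identity from \equref{3.7.5}, an appeal to antimultiplicativity of the antipode (not recorded in this paper, though standard), and a separate symmetric argument for the converse requiring the analogue of \prref{3.7} for $\gamma'$. The paper's approach avoids \prref{3.7} entirely and handles both directions at once.
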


\begin{proof}
The map 
$$\phi:\ A_{zy}\ot H_{xy}\to A_{zy}\ot H_{zx},~~\phi(a\ot h)=a_{[0]}\ot a_{[1]}S_{xy}(h),$$
is invertible, with inverse given by the formula
$$\phi^{-1}(a\ot h')=a_{[0]}\ot S^{-1}_{xz}(h').$$
An easy computation shows that 
$$\can'^{y}_{xz}=\phi\circ \can^z_{xy}.$$
Consequently $\can'$ is invertible if and only if $\can$ is invertible.
\end{proof}

\section{$k$-linear Clusters}\selabel{4}
\begin{definition}\delabel{4.1}
A $k$-linear cluster $\Aa$ with underlying class $X$ consists of a class of $k$-linear categories with underlying
class $X$, indexed by $X$, that is, for every $x\in X$, we have a $k$-linear category $\Aa^x$.
\end{definition}

Let $\Aa$ be a cluster. A right $\Aa$-module is an object $M\in \Mm_k(X)$, together with morphisms
$$\psi_{xyz}:\ M_{xy}\ot \Aa^x_{yz}\to M_{xz},~~\psi_{xyz}(m\ot a)=m a,$$
satisfying the appropriate associativity and unit conditions:
$$m(ab)=(ma)b~~{\rm and}~~m1^x_y=m,$$
for all $m\in M_{xy}$, $a\in \Aa^{x}_{yz}$ and $b\in \Aa^{x}_{zu}$. $1^x_y$ is the unit element of  $\Aa^{x}_{yy}$.
A morphism $\varphi:\ M\to N$ between two right $\Aa$-modules $M$ and $N$ is a morphism 
$\varphi:\ M\to N$ in $\Mm_k(X)$ that is right $\Aa$-linear, which means that
$$\varphi_{xz}(ma)=\varphi_{xy}(m)a,$$
for all $m\in M_{xy}$ and $a\in \Aa^x_{yz}$. The category of right $\Aa$-modules will be denoted by $\Mm_\Aa$.

\begin{example}\exlabel{4.2}
Let $B$ be a diagonal $k$-linear category, and let $M\in \Mm_k(X)$ be a left $B$-module, meaning that we have
maps $B_x\ot M_{xy}\to M_{xy}$ satisfying the appropriate associativity and unit conditions. We have a cluster
$\Aa={}_B\End(M)$ defined as follows:
$$\Aa^x_{yz}={}_B\End(M)^x_{yz}={}_{B_x}\Hom(M_{xy},M_{xz})$$
The multiplication maps are given by opposite composition: for $f\in \Aa^x_{yz}$ and $g\in \Aa^x_{zu}$, we put
$fg=g\circ f$. The unit element of $\Aa^x_{yy}$ is the identity $M_{xy}$. ${}_B\End(M)$ is called the left endocluster
of $M$. Note that $M$ is a right ${}_B\End(M)$-module, via the structure maps
$$M_{xy}\ot \Aa^x_{yz}\to M_{xz},~~~m\ot f=f(m).$$
The right endocluster ${\Aa'}=\End_B(M)$ of a right $B$-module $M$ can be defined in a similar way:
$$\Aa'^x_{yz}=\End_B(M)^x_{yz}=\Hom_{B_x}(M_{zx},M_{yx}).$$
Now the multilplcation is given by composition.
\end{example}

More examples will be presented in Sections \ref{se:6} and \ref{se:7}.

\section{Faithfully projective descent}\selabel{5}
\begin{proposition}\prlabel{5.1}
We consider the setting of \seref{2}:
$A$ and $B$ are $k$-linear categories, with underlying class $X$, and $B$ is a diagonal algebra.
$i:\ B\to A$ is a $k$-linear $X$-functor. Then $A$ is a left $B$-module via restriction of scalars,
and we can consider $\Aa={}_B\End(A)$ as in \exref{4.2}.
We have a functor $H:\ \Desc_B(A)\to \Mm_\Aa$.
\end{proposition}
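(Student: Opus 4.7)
The plan is to use the descent datum structure $\sigma_{xy}:\ M_{xy}\to M_{xx}\ot_{B_x}A_{xy}$ to transport the natural action of ${}_{B_x}\Hom(A_{xy},A_{xz})$ on $A_{xy}$ over to an action on $M_{xy}$, and then take $H(M)=M$ as an object of $\Mm_k(X)$ with this $\Aa$-action.

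First I would define, for $(M,\sigma)\in\Desc_B(A)$ and $f\in\Aa^x_{yz}={}_{B_x}\Hom(A_{xy},A_{xz})$, the action
$$\psi_{xyz}:\ M_{xy}\ot\Aa^x_{yz}\to M_{xz},\qquad m\cdot f=m_{<0>}\,f(m_{<1>}),$$
where $\sigma_{xy}(m)=m_{<0>}\ot_{B_x}m_{<1>}$. This is well-defined over the $B_x$-tensor because $f$ is left $B_x$-linear, and it produces an element of $M_{xz}$ because $m_{<0>}\in M_{xx}$, $f(m_{<1>})\in A_{xz}$, and $M$ is a right $A$-module.

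Next I would verify the three right $\Aa$-module axioms. The unit axiom is immediate from \equref{2.1.3}: for $f=\id_{A_{xy}}=1^x_y$ we get $m\cdot 1^x_y=m_{<0>}m_{<1>}=m$. For associativity, one first computes $\sigma_{xz}(m\cdot f)$: using right $A$-linearity of $\sigma$ (i.e.\ \equref{2.1.1}) applied to $m_{<0>}\in M_{xx}$ and $f(m_{<1>})\in A_{xz}$, and then the coassociativity-style identity \equref{2.1.2} to replace $\sigma_{xx}(m_{<0>})\ot_{B_x}m_{<1>}$ by $m_{<0>}\ot_{B_x}1_x\ot_{B_x}m_{<1>}$, one obtains
$$\sigma_{xz}(m\cdot f)=m_{<0>}\ot_{B_x}f(m_{<1>}).$$
Applied to $g\in\Aa^x_{zu}$, this yields $(m\cdot f)\cdot g=m_{<0>}\,g(f(m_{<1>}))=m\cdot(g\circ f)=m\cdot(fg)$, matching the opposite-composition multiplication of $\Aa$.

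On morphisms I would simply set $H(f)=f$. To check right $\Aa$-linearity, one applies the right $A$-linearity of $f_{xz}$ together with the compatibility \equref{2.1.4}: for $m\in M_{xy}$ and $g\in\Aa^x_{yz}$,
$$f_{xz}(m\cdot g)=f_{xz}(m_{<0>}g(m_{<1>}))=f_{xx}(m_{<0>})\,g(m_{<1>})=f_{xy}(m)_{<0>}\,g(f_{xy}(m)_{<1>})=f_{xy}(m)\cdot g,$$
where the penultimate equality is \equref{2.1.4}. Functoriality ($H$ preserves identities and composition) is immediate since $H$ acts as the identity on the underlying objects and morphisms of $\Mm_k(X)$. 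The only non-routine step is the derivation of $\sigma_{xz}(m\cdot f)=m_{<0>}\ot_{B_x}f(m_{<1>})$ needed for associativity; once that identity is in hand, everything else is bookkeeping.
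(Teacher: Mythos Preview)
Your proposal is correct and follows essentially the same approach as the paper: define $m\cdot f=m_{<0>}f(m_{<1>})$, verify the unit via \equref{2.1.3}, associativity via \equref{2.1.1} and \equref{2.1.2}, and $\Aa$-linearity of morphisms via \equref{2.1.4}. The only cosmetic difference is that you isolate the intermediate identity $\sigma_{xz}(m\cdot f)=m_{<0>}\ot_{B_x}f(m_{<1>})$ before applying $g$, whereas the paper performs the associativity computation inline; the underlying steps are identical.
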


\begin{proof}
Let $(M,\sigma)\in \Desc_B(A)$. We define a right $\Aa$-action on $M$ as follows:
for $m\in M_{xy}$ and $f\in \Aa^x_{yz}$, let $mf=m_{<0>}f(m_{<1>})\in M_{xz}$.
Let us show that the associativity and unit condition are satisfied. Take $g\in \Aa^x_{zu}$.
\begin{eqnarray*}
(mf)g&=& (m_{<0>}f(m_{<1>}))_{<0>}g\bigl((m_{<0>}f(m_{<1>}))_{<1>}\bigr)\\
&\equal{\equref{2.1.1}}& m_{<0><0>}g\bigl(m_{<0><1>}f(m_{<1>})\bigr)\\
&\equal{\equref{2.1.2}}& m_{<0>}(g\circ f)(m_{<1>})=m(fg);\\
mA_{xy}&=&m_{<0>}m_{<1>}\equal{\equref{2.1.3}}m.
\end{eqnarray*}
Now we define $H(M,\sigma)=M$. $H$ acts as the identity on morphisms: if $\varphi:\ (M,\sigma)\to (M',\sigma')$ is a morphism
in $\Desc_B(A)$, then $\varphi$ is right $\Aa$-linear. Indeed, for $m\in M_{xy}$ and
$f\in \Aa^x_{yz}$, we have that
\begin{eqnarray*}
\varphi_{xy}(m)f&=& \varphi_{xy}(m)_{<0>}f(\varphi_{xy}(m)_{<1>})\\
&\equal{\equref{2.1.4}}&\varphi_{xx}(m_{<0>})f(m_{<1>})\\
&=& \varphi_{xx}(m_{<0>})f(m_{<1>}))=\varphi_{xz}(mf).
\end{eqnarray*}
\end{proof}

\begin{remark}\relabel{5.2}
To any $a\in A_{yz}$, we can associate $r_a\in \Aa^x_{yz}={}_{B_x}\Hom(A_{xy},A_{xz})$, given by
right mulitplication by $a$: $r_a(a')=a'a$, for all $a'\in A_{xy}$. For $a'\in A_{xy}$ we have that
$$a'r_a=a'_{<0>}r_a(a'_{<1>})= a'_{<0>}a'_{<1>}a=a'a.$$
\end{remark}

\begin{theorem}\thlabel{5.3}
Let $A$ and $B$ be as in \prref{5.1}, and assume that $A$ is locally finite as a left
$B$-module. Then the categories $\Desc_B(A)$ and $\Mm_\Aa$ are isomorphic.
\end{theorem}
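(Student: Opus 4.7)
The functor $H:\ \Desc_B(A)\to \Mm_\Aa$ is provided by \prref{5.1}, so the plan is to construct an inverse functor $K:\ \Mm_\Aa\to \Desc_B(A)$ and to verify that $H\circ K=\Id$ and $K\circ H=\Id$. Local finiteness is used as follows: for each $x,y\in X$ fix a finite dual basis $\sum_j e_j^*\ot_{B_x}e_j\in A_{xy}^*\ot_{B_x}A_{xy}$ for the left $B_x$-module $A_{xy}$, and to every $B_x$-linear map $a^*:\ A_{xy}\to B_x$ associate the element $\widetilde{a^*}\in\Aa^x_{yx}={}_{B_x}\Hom(A_{xy},A_{xx})$ defined by $\widetilde{a^*}(b)=i_x(a^*(b))$. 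For $M\in\Mm_\Aa$ I set $K(M)=(M,\sigma)$ with
$$\sigma_{xy}(m)=\sum_j(m\cdot\widetilde{e_j^*})\ot_{B_x}e_j\in M_{xx}\ot_{B_x}A_{xy},$$
and $K(\varphi)=\varphi$ on morphisms. Independence of the chosen dual basis follows by identifying $\sigma_{xy}(m)$, under the isomorphism $\Hom_{B_x}(A_{xy}^*,M_{xx})\cong M_{xx}\ot_{B_x}A_{xy}$ from \equref{1.2.2}, with the basis-free map $a^*\mapsto m\cdot\widetilde{a^*}$; the explicit inverse \equref{1.2.3} recovers the displayed formula.

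To see that $(M,\sigma)$ is a descent datum, the three axioms are verified by rewriting everything in $\Aa^x$ and invoking the dual basis identity \equref{1.2.1}. For \equref{2.1.3} one checks that $\sum_j r_{e_j}\circ\widetilde{e_j^*}=\id_{A_{xy}}$ in $\Aa^x_{yy}$, so that $\sum_j(m\cdot\widetilde{e_j^*})\cdot e_j=m\cdot\id=m$. Equation \equref{2.1.1} is obtained by showing that both $\sigma_{xz}(ma)$ and $\sigma_{xy}(m)\cdot a$ correspond, under the same $\Hom_{B_x}$ isomorphism applied to $A_{xz}^*$ and $M_{xx}$, to the map $a^*\mapsto m\cdot(\widetilde{a^*}\circ r_a)$; the coassociativity condition \equref{2.1.2} is handled analogously after one further application of the same isomorphism. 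That morphisms in $\Mm_\Aa$ are automatically morphisms of the underlying descent data is immediate from the formula for $\sigma_{xy}$.

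Finally, $H\circ K=\Id$ and $K\circ H=\Id$ are checked directly. Given $M\in\Mm_\Aa$, the action induced on $m\in M_{xy}$ by $f\in\Aa^x_{yz}$ through $H\circ K$ is
$$\sum_j(m\cdot\widetilde{e_j^*})\cdot f(e_j)=m\cdot\Bigl(\sum_j r_{f(e_j)}\circ\widetilde{e_j^*}\Bigr)=m\cdot f,$$
the last equality holding because $f$ is $B_x$-linear and $\sum_j e_j^*(b)e_j=b$ by \equref{1.2.1}. Starting instead from a descent datum $(M,\sigma)$ and applying $K\circ H$ yields
$$\sigma'_{xy}(m)=\sum_j m_{<0>}\,i_x\bigl(e_j^*(m_{<1>})\bigr)\ot_{B_x}e_j=m_{<0>}\ot_{B_x}\sum_j e_j^*(m_{<1>})\,e_j=m_{<0>}\ot_{B_x}m_{<1>},$$
where the middle step moves the scalar in $B_x$ across the tensor and the last step is again \equref{1.2.1}. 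The only subtlety throughout is the consistent bookkeeping of the left and right $B_x$-module structures on $A_{xy}$, $A_{xx}$, and $M_{xx}$ induced through $i_x$; once those are fixed, every verification reduces to the dual basis formulas and the $\Aa$-module axioms.
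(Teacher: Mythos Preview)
Your proof is correct and follows essentially the same approach as the paper: you define the inverse functor $K$ via the same formula $\sigma_{xy}(m)=\sum_j m\cdot(i_x\circ e_j^*)\ot_{B_x}e_j$, verify the descent-datum axioms by passing through the isomorphism $M_{xx}\ot_{B_x}A_{xy}\cong\Hom_{B_x}(A_{xy}^*,M_{xx})$ and reducing to identities in $\Aa^x$, and check $H\circ K=\Id$ and $K\circ H=\Id$ by the same dual-basis computations. The paper spells out \equref{2.1.2} in full where you say ``analogously'', but the argument is identical in substance.
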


\begin{proof}
We will show that the functor $H$ from \prref{5.1} has an inverse $K$. Take a right $\Aa$-module $M$.
Then $M$ is also a right $A$-module: for $m\in M_{xy}$ and  $a\in A_{yz}$, we just put
$ma=mr_a$, see \reref{5.2}.\\
Let $\sum_i e_i^*\ot_{B_x} e_i$ be a finite dual basis of the left $B_x$-module $A_{xy}$. For $a^*\in A_{xy}^*$,
we have that $i_x\circ a^*\in \Aa^x_{yx}$. Now we define $\sigma_{xy}:\ M_{xy}\to M_{xx}\ot_{B_{x}}A_{xy}$
as follows:
$$\sigma_{xy}(m)=\sum_i m(i_x\circ e_i^*) \ot_{B_x} e_i.$$
We claim that $(M,\sigma)\in \Desc_B(A)$. We need to show that (\ref{eq:2.1.1}-\ref{eq:2.1.3}) hold. Take
$m\in M_{xy}$ and $a\in A_{yz}$, and let $\sum_j f_j^*\ot_{B_x} f_j$ be a finite dual basis for $A_{xz}$.
\equref{2.1.1} follows if we can show that
$$\sigma_{xz}(ma)=\sum_j (mr_a)(i_x\circ f_j^*)\ot_{B_x}f_j$$
equals
$$\sigma_{xz}(m)a=\sum_i m(i_x\circ e_i^*) \ot_{B_x} e_ia$$
in $M_{xx}\ot_{B_x} A_{xz}\cong \Hom_{B_x}(A_{xz}^*,M_{xx})$, see \equref{1.2.2}. To this end, it suffices
to show that both terms are equal after we evaluate them at an arbitrary $a^*\in A_{xz}^*$, that is,
$$\sum_j ((mr_a)(i_x\circ f_j^*))a^*(f_j)=m\Bigl(\sum_j r_{(i_x\circ a^*)(f_j)}\circ (i_x\circ f_j^*)\circ r_a\Bigr)$$
equals
$$\sum_i (m(i_x\circ e_i^*)) a^*(e_ia)=m\Bigl(\sum_i r_{a^*(e_ia)}\circ i_x\circ e_i^*\Bigr).$$
It suffices to show that
$$\sum_j r_{(i_x\circ a^*)(f_j)}\circ (i_x\circ f_j^*)\circ r_a=\sum_i r_{a^*(e_ia)}\circ i_x\circ e_i^*,$$
which can be easily done as follows:
for all $a'\in A_{xy}$, we easily find that
$$
\Bigl(\sum_j r_{(i_x\circ a^*)(f_j)}\circ (i_x\circ f_j^*)\circ r_a\Bigr)(a')=a^*(a'a)=
\Bigl(\sum_i r_{a^*(e_ia)}\circ i_x\circ e_i^*\Bigr)(a').
$$
\equref{2.1.2} amounts to the equality of
\begin{eqnarray*}
&&\hspace*{-2cm}
\sigma_{xx}(m_{<0>})\ot_{B_x}m_{<1>}=
\sum_i\sigma_{xx}(m(i_x\circ e_i^*))\ot_{B_x} e_i\\
&=& \sum_{i,j} (m(i_x\circ e_i^*))(i_x\circ f_j^*)\ot_{B_x} f_j \ot_{B_x} e_i
\end{eqnarray*}
and
$$m_{<0>}\ot_{B_x} 1_x \ot_{B_x}m_{<1>}
= \sum_i m(i_x\circ e_i^*)\ot_{B_x} 1_x \ot_{B_x} e_i$$
in 
$M_{xx}\ot_{B_x}A_{xx}\ot_{B_x} A_{xy}~\congo{\equref{1.2.2}}~
\Hom_{B_x}(A_{xx}^*,M_{xx})\ot_{B_x} A_{xy}$. To this end, it suffices to show that
$$\sum_{j} (m(i_x\circ e_i^*))(i_x\circ f_j^*)\ot_{B_x} f_j =m(i_x\circ e_i^*)\ot_{B_x} 1_x$$
in $M_{xx}\ot_{B_x}A_{xx}\cong \Hom_{B_x}(A_{xx}^*,M_{xx})$, for all $i$. This is equivalent
to proving that
$$\sum_{j} ((m(i_x\circ e_i^*))(i_x\circ f_j^*))a^*( f_j) =(m(i_x\circ e_i^*))a^*( 1_x),$$
for all $a^*\in A_{xx}^*$, or
$$m\Bigl(\sum_i r_{(i_x\circ a^*)( f_j)}\circ i_x\circ f_j^*\circ i_x\circ e_i^*\Bigr)=
m\Bigl(r_{(i_x\circ a^*)( 1_x)}\circ i_x\circ e_i^*\Bigr),$$
so it suffices to show that
$$\sum_j r_{(i_x\circ a^*)( f_j)}\circ i_x\circ f_j^*\circ i_x\circ e_i^*=r_{(i_x\circ a^*)( 1_x)}\circ i_x\circ e_i^*.$$
Keeping in mind that $i_x(b)=b1_{xx}$ for all $b\in B_x$, we compute that
\begin{eqnarray*}
&&\hspace*{-2cm}
\sum_j (r_{(i_x\circ a^*)( f_j)}\circ i_x\circ f_j^*\circ i_x\circ e_i^*)(a)
=\sum_j f_j^*(e_i^*(a)1_x)a^*(f_j)1_x\\
&=& a^*(e_i^*(a)1_x)1_x=e_i^*(a)a^*(1_x)1_x
= (r_{(i_x\circ a^*)( 1_x)}\circ i_x\circ e_i^*)(a),
\end{eqnarray*}
for all $a\in A_{xy}$. Let us finally show that \equref{2.1.3} holds: for all $m\in M_{xy}$, we have that
$$m_{<0>}m_{<1>}=\sum_i (m(i_x\circ e_i^*))e_i=m\bigl(\sum_i r_{e_i}\circ i_x\circ e_i^*\bigr)=mA_{xy}=m,$$
where we used the fact that $\sum_i r_{e_i}\circ i_x\circ e_i^*=A_{xy}$. Indeed, for all $a\in A_{xy}$, we have
that
$$\sum_i (r_{e_i}\circ i_x\circ e_i^*)(a)=\sum_i e_i^*(a)e_i=a.$$
We define $K(M)=(M,\sigma)$ at the level of objects. We leave it to the reader to show that if $\varphi:\ M\to M'$
is right $\Aa$-linear, then $\varphi:\ (M,\sigma)\to (M',\sigma')$ is a morphism of descent data, so that we can
define $K$ as the identity at the level of morphisms.\\
It remains to be shown that $H\circ K$ and $K\circ H$ are the identity functors. Take a descent datum
$(M,\sigma)$ and let $KH(M,\sigma)=(M,\tilde{\sigma})$. We will show that $\tilde{\sigma}=\sigma$. For all $m\in M_{xy}$,
we have
\begin{eqnarray*}
\tilde{\sigma}_{xy}(m)&=&
\sum_i m((i_x\circ e_i^*)\ot_{B_x} e_i
= \sum_i m_{<0>}e_i^*(m_{<1>})\ot_{B_x} e_i\\
&=& \sum_i m_{<0>}\ot_{B_x} e_i^*(m_{<1>})e_i= m_{<0>}\ot m_{<1>}=\sigma_{xy}(m).
\end{eqnarray*}
Finally take a right $\Aa$-module $M$. Then $HK(M)=M$ with a new $\Aa$-action $\rightact$. We will show that it
coincides with the original one. For all $m\in M_{xy}$ and $f\in \Aa^x_{yz}={}_{B_x}\Hom(A_{xy},A_{xz})$, we have
that
$$m\rightact f =\sum_i(m(i_x\circ e_i^*))f(e_i)=m\bigl(\sum_i r_{f(e_i)}\circ i_x\circ e_i^*\bigr)=mf.$$
Here we used the fact that $\sum_i r_{f(e_i)}\circ i_x\circ e_i^*=f$, which can be seen as follows. For all $a\in A_{xy}$,
we have
$$\sum_i (r_{f(e_i)}\circ i_x\circ e_i^*)(a)=\sum_i e_i^*(a)f(e_i)=f(a).$$
\end{proof}

\begin{theorem}\thlabel{5.4}
Let $A$ and $B$ be as in \prref{5.1}. Then we have a pair of adjoint functors $(F_1,G_1)$
between the categories $\Dd_k(X)_B$ and $\Mm_\Aa$.  If $A$ is locally finite as a left
$B$-module, then the counit of this adjunction is an isomorphism.
If $A$ is locally faithfully projective as a left
$B$-module, then the unit of the adjunction is also an
isomorphism, and $(F_1,G_1)$ is a pair of inverse equivalences.
\end{theorem}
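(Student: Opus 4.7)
The plan is to obtain $(F_1,G_1)$ by composing previously constructed functors and to transport properties of $(F,G)$ through the isomorphism of \thref{5.3}. Set $F_1 := H \circ F$, with $F$ from \prref{2.2} and $H$ from \prref{5.1}; unwinding the two definitions yields $F_1(N)_{xy} = N_x \otimes_{B_x} A_{xy}$ with right $\Aa$-action $(n \otimes_{B_x} a) \cdot f = n \otimes_{B_x} f(a)$ for $f \in \Aa^x_{yz}$. For the right adjoint I will define $G_1$ directly, so that the adjunction itself does not require the locally finite hypothesis. For fixed $x \in X$, the row $A^x := (A_{xy})_{y \in X}$ is a $(B_x,\Aa^x)$-bimodule (left $B_x$-action via $i_x$, right $\Aa^x$-action $a \cdot f = f(a)$), and each row $M^x := (M_{xy})_{y \in X}$ of a right $\Aa$-module $M$ is a right $\Aa^x$-module; set
\begin{equation*}
G_1(M)_x := \Hom_{\Aa^x}(A^x, M^x),
\end{equation*}
a right $B_x$-module via $(\phi \cdot b)_y(a) = \phi_y(i_x(b)\,a)$, with $G_1$ acting on morphisms by post-composition. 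The adjunction $\Hom_\Aa(F_1(N),M) \cong \Hom_B(N,G_1(M))$ then reduces, componentwise in $x$, to the classical tensor-hom adjunction
\begin{equation*}
\Hom_{\Aa^x}(N_x \otimes_{B_x} A^x, M^x) \cong \Hom_{B_x}(N_x, \Hom_{\Aa^x}(A^x, M^x)),
\end{equation*}
using that morphisms in both $\Mm_\Aa$ and $\Dd_k(X)_B$ decompose as families of morphisms indexed by $x \in X$.

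Assume now that $A$ is locally finite as a left $B$-module. \thref{5.3} then makes $H$ an isomorphism of categories with inverse $K$, so by uniqueness of right adjoints $G_1 \cong G \circ K$; this may be verified directly using the dual-basis formula for $K$ together with the isomorphism \equref{1.2.2}. Under this identification, the counit $\varepsilon^M_1 \colon F_1 G_1(M) \to M$ of $(F_1,G_1)$ corresponds to $H(\varepsilon^{K(M)})$, where $\varepsilon$ denotes the counit of $(F,G)$. Since local finiteness implies local flatness, \prref{2.3} gives that each $\varepsilon^{K(M)}_{xy}$ is bijective, whence $\varepsilon^M_1$ is an isomorphism. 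When in addition $A$ is locally faithfully projective, each $A_{xx}$ is faithfully flat over $B_x$; since $K H = \Id$ on $\Desc_B(A)$, the unit of $(F_1,G_1)$ coincides with the unit of $(F,G)$, which is bijective by \prref{2.4}. With both unit and counit being isomorphisms, $(F_1,G_1)$ is a pair of inverse equivalences.

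The main obstacle is merely reconciling the direct definition of $G_1$ with the composite $G \circ K$ once local finiteness is assumed; everything else is a formal consequence of \thref{5.3} together with Propositions \ref{pr:2.3} and \ref{pr:2.4}.
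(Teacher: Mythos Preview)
Your argument is correct and, for the heart of the theorem (the counit and unit being isomorphisms under the finiteness hypotheses), proceeds exactly as the paper does: identify $(F_1,G_1)$ with $(HF, GK)$ via \thref{5.3} and then invoke Propositions~\ref{pr:2.3} and~\ref{pr:2.4}. The one genuine difference is in how you set up the adjunction \emph{before} any finiteness assumption. You take $G_1(M)_x = \Hom_{\Aa^x}(A^x, M^x)$ and obtain the adjunction from tensor--hom for modules over the linear categories $\Aa^x$, using that both $\Dd_k(X)_B$ and $\Mm_\Aa$ decompose as products over $x\in X$. The paper instead works out $G_1 = G\circ K$ explicitly in the locally finite case, arrives at the dual-basis-free description
\[
G_1(M)_x=\{m\in M_{xx}\mid m(i_x\circ a^*) = m\, a^*(1_x)\ \text{for all }a^*\in A_{xx}^*\},
\]
and then declares that this formula, together with the unit and counit, makes sense and yields an adjunction in general. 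Your route makes the existence of the adjunction transparent at the cost of a less explicit $G_1$; the paper's route gives a concrete subobject of $M_{xx}$ but leaves the verification of the general adjunction to the reader. Under the locally finite hypothesis the two descriptions agree via the evaluation map $\phi\mapsto \phi_x(1_x)$, which is precisely the identification you are invoking when you appeal to uniqueness of right adjoints.
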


\begin{proof}
In the case where $A$ is locally finite as a left
$B$-module, the result is an immediate consequence of Propositions \ref{pr:2.3} and \ref{pr:2.4} and \thref{5.3}, taking into
account the remarks made in \seref{1.2}. Let us describe the functors $F_1=H\circ F$ and $G_1=G\circ K$.
First take $N\in \Dd_k(X)$. It is easy to show that $F_1(N)_{xy}= N_x\ot_{B_x} A_{xy}$, with right $\Aa$-action
$$(n\ot_{B_x} a)f= n\ot_{B_x} f(a),$$
for $n\in N_x$, $a\in A_{xy}$ and $f\in \Aa^x_{yz}={}_{B_x}\Hom(A_{xy}, A_{xz})$.\\
Now for $M\in \Mm_\Aa$, $K(M)=(M,\sigma)$ is defined in the proof of \thref{5.3}. Now
$$G_1(M)_x=G(M,\sigma)_x=\{m\in M_{xx}~|~\sigma_{xx}(m)=m\ot_{B_x} 1_x\}.$$
Now $\sigma_{xx}(m)=\sum_j m(i_x\circ f_j^*)\ot_{B_x} f_j$, where $\sum_j f_j^*\ot_{B_x} f_j$ is a dual basis
of $A_{xx}$ as a left $B_x$-module. Now $\sigma_{xx}(m)$ and $m\ot_{B_x} 1_x$ live in
$M_{xx}\ot_{B_x} A_{xx}\cong \Hom_{B_x} (A_{xx}^*, M_{xx})$, see \seref{1.2}. For all $a^*\in A^*_{xx}$, we have that
$$\sum_j  (m(i_x\circ f_j^*))a^*(f_j)=
\sum_j m\bigl(r_{i_x\circ a^*(f_j)}\circ i_x\circ f_j^*\bigr)
\equal{(*)} m(i_x\circ a^*).$$
At $(*)$, we used the following: for all $a\in A_{xx}$, we have that
$$r_{i_x\circ a^*(f_j)}\circ i_x\circ f_j^*= \sum_j f_j^*(a)1_xa^*(f_j)=a^*(a)1_x.$$
We conclude that $\sigma_{xx}(m)=m\ot_{B_x} 1_x$ if and only if $ma^*=ma^*(1_x)$ for all
$a^*\in A_{xx}^*$, and
$$G_1(M)_x=\{m\in M_{xx}~|~ma^*=ma^*(1_x)~{\rm for ~all~}a^*\in A_{xx}^*\}.$$
Now we drop the assumption that $A$ is locally finite. We can still define the functors
$F_1$ and $G_1$: the explicit formulas presented above do not involve the dual basis, and it can easily
be established that the same is true for the unit and the counit of the adjunction.
\end{proof}

Now consider the right endomorphism cluster $\Aa=\End_B(A)$.
We have a functor $H_1:\ {}_B\Desc(A)\to {}_\Aa\Mm$, which is an isomorphism
of categories if $A$ is locally finite as a right $B$-module. We have an adjunction $(F'_1,G'_1)$
between ${}_B\Dd_k(X)$ and ${}_\Aa\Mm$ which is a pair of inverse equivalences
if $A$ is locally faithfully projective as a right $B$-module.

\section{Dual $K$-Galois category extensions}\selabel{6}
We begin this Section with a new class of examples of clusters.
Let $K$ be a dual $k$-linear category. A right $K$-module category is a $k$-linear category $A$ such that every $A_{xy}$
is a right $K_{xy}$-module, and the following condition holds, for all
$a\in A_{xy}$, $a'\in A_{yz}$ and $k\in K_{xz}$:
\begin{equation}\eqlabel{1.1.2}
(aa')\cdot k= (a\cdot k_{(1,x,y)})(a'\cdot k_{(2,y,z)}).
\end{equation}
We have a cluster $\Bb=K\# A$,
$\Bb^x_{yz}= K_{xy}\# A_{yz}$.
The symbol $\#$ replaces $\ot$ and indicates that $\Bb^x$ is a $k$-linear category.
The multiplication and unit are given by the formulas
\begin{equation}\eqlabel{6.1.1}
(k\# a)(k'\# a')=kk'_{(1,x,y)}\# (a\cdot k'_{(2,y,z)})a'~~{\rm and}~~
1^x_y=1_{xy}\#1_y,
\end{equation}
for $k\in K_{xy}$, $k'\in K_{xz}$, $a\in A_{yz}$, $a'\in A_{zu}$. We call $K\# A$
the smash product cluster. Now we have a diagonal algebra $B=A^K$ defined as follows:
$$B_x= A_{xx}^{K_{xx}}=\{a\in A_{xx}~|~a\cdot k=\varepsilon_x(k)a,~~{\rm for~all}~k\in K_{xx}\}.$$
Moreover we have a canonical morphism of clusters
\begin{equation}\eqlabel{6.1.2}
\kappa:\ \Bb=K\# A\to \Aa={}_B\End(A)
\end{equation}
from the smash product cluster to the endocluster, given by the formulas
$$\kappa^x_{yz}:\ K_{xy}\# A_{yz}\to{}_{B_x}\Hom(A_{xy},A_{xz}),~~~
\kappa^x_{yz}(k\# a)(a')=(a'\cdot k)a.$$

\begin{definition}\delabel{6.1}
We call $A$ a dual (right) $K$-Galois category extension of $B=A^K$ if $\kappa$ is an isomorphism of clusters.
\end{definition}

\begin{proposition}\prlabel{6.2}
With notation as above,
we have an adjoint pair of functors $(F_2,G_2)$ between the categories $\Dd_k(X)_B$ and $\Mm_{\Bb}$.
Moreover $F_1=R\circ F_2$, where $F_1:\ \Dd_k(X)_B\to \Mm_\Aa$ is the functor defined in \thref{5.4}, and
$R:\ \Mm_\Bb\to \Mm_\Aa$ is the restriction of scalars functor via $\kappa$.\\
$(F_2,G_2)$ is a pair of inverse equivalences if the following conditions are satisfied
\begin{enumerate}
\item $A_{xy}$ is finitely generated and projective as a left $B_x$-module, for all $x,y\in X$;
\item $A_{xy}$ is a left $B_{xx}$-progenerator, for all $x\in X$;
\item $A$ is a dual $K$-Galois category extension of $B=A^K$.
\end{enumerate}
\end{proposition}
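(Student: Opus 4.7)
The plan is to construct $F_2$ and $G_2$ by hand in the spirit of \prref{3.2} and \thref{5.4}, verify the adjunction, establish the factorisation $F_1 = R \circ F_2$ by comparing actions, and then deduce the equivalence from \thref{5.4} once $\kappa$ is invertible.

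For $N \in \Dd_k(X)_B$, I set $F_2(N)_{xy} = N_x \ot_{B_x} A_{xy}$ with right $\Bb$-action
$$(n \ot_{B_x} a) \cdot (k \# a') = n \ot_{B_x} (a \cdot k) a',$$
for $(k \# a') \in \Bb^x_{yz}$. Well-definedness of this action over $B_x$ relies on $b \in B_x = A_{xx}^{K_{xx}}$ being invariant, so that $(ba)\cdot k = b(a\cdot k)$ by \equref{1.1.2} and the counit property. The associativity axiom for the $\Bb$-action then reduces to the multiplication formula \equref{6.1.1} in $\Bb$ combined with the module-algebra compatibility \equref{1.1.2}. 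For $M \in \Mm_\Bb$, I define
$$G_2(M)_x = \{ m \in M_{xx} \mid m \cdot (k \# 1_x) = \varepsilon_x(k)\, m \text{ for all } k \in K_{xx} \},$$
which is a right $B_x$-module via $m \cdot b = m \cdot (1_{xx} \# b)$. The unit and counit of the adjunction are $\eta^N_x(n) = n \ot_{B_x} 1_x$ and $\varepsilon^M_{xy}(m \ot_{B_x} a) = m \cdot (1_{xy} \# a)$, and the triangle identities are immediate from the unit formula in \equref{6.1.1}.

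The factorisation $F_1 = R \circ F_2$ follows at once by comparing actions: for $f = \kappa^x_{yz}(k\#a')$ we have $f(a) = (a\cdot k)a'$ by \equref{6.1.2}, so the $\Aa$-action on $R(F_2(N))$ sends $(n \ot_{B_x} a, f)$ to $n \ot_{B_x} (a \cdot k) a' = n \ot_{B_x} f(a)$, which is precisely the action defining $F_1$ in the proof of \thref{5.4}.

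For the final claim, conditions (1) and (2) amount to the hypothesis that $A$ is locally faithfully projective as a left $B$-module, so \thref{5.4} provides an equivalence $(F_1,G_1)$ between $\Dd_k(X)_B$ and $\Mm_\Aa$. Under condition (3), $\kappa$ is an isomorphism of clusters, so $R$ is an isomorphism of categories with inverse given by restriction of scalars along $\kappa^{-1}$. From $F_1 = R \circ F_2$ it follows that $F_2 = R^{-1} \circ F_1$ is an equivalence, and its right adjoint $G_2$ is a quasi-inverse. I expect the main obstacle to be the bookkeeping needed to verify well-definedness and $\Bb$-linearity of the action on $F_2(N)$, in particular handling the Sweedler components of $\Delta_{xyz}$ which live in different $K_{xy}$'s; once that is in place the rest of the argument is formal.
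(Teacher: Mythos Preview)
Your proposal is correct and follows essentially the same route as the paper: you construct $F_2$ and $G_2$ exactly as the paper does, give the same unit and counit, verify the factorisation $F_1=R\circ F_2$ by comparing actions, and then invoke \thref{5.4} together with the invertibility of $\kappa$ under condition (3) to conclude that $F_2$ is an equivalence. The paper leaves the same verifications (well-definedness over $B_x$, the $B_x$-submodule property of $G_2(M)_x$, the triangle identities) to the reader, so your extra remarks on these points are a bonus rather than a departure.
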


\begin{proof}
For $N\in \Dd_k(X)_B$, $F_2(N)_{xy}=N_x\ot_{B_x} A_{xy}$, with right $\Bb$-action defined as follows:
$$(n\ot a)(k\# a')=n\ot (a\cdot k)a',$$
for all $n\in N_x$, $a\in A_{xy}$, $k\in K_{xy}$ and $a'\in A_{yz}$.\\
For $M\in \Mm_\Bb$, $G_2(M)$ is defined as follows:
$$G_2(M)_x=\{m\in M_{xx}~|~m(k\#1_x)=\varepsilon_x(k)m,~{\rm for~all~}k\in K_x\}=M_{xx}^{K_{xx}}.$$
We have to show that $G_2(M)_x$ is a right $B_x$-module. First observe that $M$ is a right $A$-module
via restriction of scalars: the maps $M_{xy}\ot A_{yz}\to M_{xz}$ are defined as follows:
$$ma=m(1_{xy}\#a).$$
Now for $m\in M_{xx}$ and $b\in B_x\subset A_{xx}$, we define
$$mb=m(1_{xx}\#b).$$
This makes $M_{xx}$ a right $B_x$-module. $M_{xx}^{K_{xx}}$ is a $B_x$-submodule. Take $m\in M_{xx}^{K_{xx}}$
and $b\in B_x$. For all $k\in K_{xx}$, we have
\begin{eqnarray*}
(mb)(k\#1_x)&=&
m(1_{xx}\#b) (k\#1_x)=m(k_{(1,x,x)}\# b\cdot k_{(2,x,x)})\\
&=&m(k_{(1,x,x)}\# \varepsilon_x(k_{(2,x,x)})b)=m(k\# b)\\
&=& m(k\# 1_x)(1_{xx}\# b)=\varepsilon_x(k)mb.
\end{eqnarray*}
We describe the unit and the counit of the adjunction. For $M\in \Mm_\Bb$, $\varepsilon^M:\ F_2G_2(M)\to M$ has
the following components
$$\varepsilon^M_{xy}:\ M_{xx}^{K_{xx}}\ot_{B_x} A_{xy}\to M_{xy},~~\varepsilon^M_{xy}(m\ot_{B_x}a)=ma.$$
For $N\in \Dd_k(X)_B$, $\eta^N:\ N\to G_2F_2(N)$ has the following components:
$$\eta^N_x:\ N_x\to (N_x\ot_{B_x}A_{xx})^{K_{xx}},~~\eta^N_x(n)=n\ot_{B_x} 1_x.$$
Verification of the details is left to the reader. The second statement amounts to the following assertion.
Take $n\in N_x$, $a\in A_{xy}$, $k\in A_{xy}$ and $a'\in A_{yz}$. We need to show that
$(n\ot a)(k\#a')$ in $F_2(N)$ equals $(n\ot a)\kappa^x_{yz}(k\#a')$ in $F_1(N)$. Indeed,
$$(n\ot a)\kappa^x_{yz}(k\#a')=n\ot \kappa^x_{yz}(k\#a')(a)=n\ot (a\cdot k)a'=(n\ot a)(k\#a').$$
If $A$ is a dual $K$-Galois category extension of $B=A^K$, then $R$ is an isomorphism of categories. The
two other assumptions imply that $F_1$ is an equivalence of categories, see \thref{5.4}. The fact that $F_1=R\circ F_2$ implies that
$F_2$ is a category equivalence.
\end{proof}

Let us briefly state the left-handed versions of the results in this Section. For a left $K$-module category $A$,
we have a cluster $\Bb'=A\# K$, $\Bb'^x_{yz}=A_{yz}\#K_{zx}$, with multiplication
\begin{equation}\eqlabel{6.3.1}
(a\# k)(a'\# k')=a(k_{(1,z,u)}\cdot a')\# k_{(2,u,x)}k,
\end{equation}
for $a\in A_{yz}$, $k\in K_{zx}$, $a'\in A_{zu}$, $k'\in K_{ux}$. The units are $1^x_y=1_y\# 1_{yx}\in \Bb^x_{yy}$.
Let $B=A^K$. Then we have a canonical morphism of clusters 
\begin{equation}\eqlabel{6.3.2}
\kappa:\ \Bb'=A\# K\to \Aa'=\End_B(A),
\end{equation}
given by the formulas
$$\kappa^x_{yz}:\ A_{yz}\#K_{zx}\to \Hom_{B_x}(A_{zx},A_{yx}),~~\kappa^x_{yz}(a\# k)(a')=a(k\cdot a').$$
We call $A$ a dual (left) $K$-Galois category extension of $B=A^K$ if $\kappa$ is an isomorphism of clusters.

\section{Duality}\selabel{7}

\subsection{The Koppinen smash product}\selabel{7.1}
In the previous Sections, we have introduced the notions of $H$-Galois category extension and dual $K$-Galois category extension,
$H$ being a (semi)-Hopf category, and $K$ being is a dual (semi)-Hopf category. In this Section, we discuss how these notions are connected via duality.\\
To a right $H$-comodule category $A$, we associate a $k$-linear cluster $\Cc=\#(H,A)$, which can be
viewed as the multi-object version of the Koppinen
smash product \cite{K}.
$$\Cc^x_{yz}=\#(H_{xz},A_{zy}),$$
where $\#$ replaces $\Hom$, to indicate that we have specially defined multiplication maps
$\#:\ \Cc^x_{yz}\ot \Cc^x_{zu}\to \Cc^x_{yu}$ defined as follows:
for $g:\ H_{xz}\to A_{zy}$ and $g':\ H_{xu}\to A_{uz}$, we have
$g\# g':\ H_{xu}\to A_{uy}$ given by the formula
\begin{equation}\eqlabel{7.1.0}
(g\# g')(h)=g'(h_{(2)})_{[0]}g\bigl(h_{(1)}g'(h_{(2)})_{[1]}\bigr).
\end{equation}
The units are the maps $i^x_y=\eta_y\circ \varepsilon_{xy}:\ H_{xy}\to A_{yy}$,
$i^x_y(h)=\varepsilon_{xy}(h)1_y$. Verification of the associativity and unit conditions is
left to the reader.

\begin{proposition}\prlabel{7.1}
Let $A$ be a right $H$-comodule category, and let $B=A^{{\rm co}H}$. We have a morphism of clusters
$$\delta:\ \Cc=\#(H,A)\to \Aa^{\rm op}= {}_B\End(A)^{\rm op}.$$
$$\delta^x_{yz}:\ \Cc^x_{yz}=\#(H_{xz},A_{zy})\to \Aa^{{\rm op}x}_{yz}=\Aa^x_{zy}=
{}_{B_x}\Hom(A_{xz},A_{xy})$$
is given by the formula
\begin{equation}\eqlabel{7.1.1}
\delta^x_{yz}(g)(a)=a_{[0]}g(a_{[1]}).
\end{equation}
\end{proposition}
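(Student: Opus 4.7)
The plan is to verify the three things needed for $\delta$ to be a morphism of clusters: (i) for each $g\in\Cc^x_{yz}$, the $k$-linear map $\delta^x_{yz}(g):\ A_{xz}\to A_{xy}$ defined by \equref{7.1.1} is actually left $B_x$-linear, so that $\delta^x_{yz}$ does land in $\Aa^x_{zy}={}_{B_x}\Hom(A_{xz},A_{xy})$; (ii) $\delta$ sends units to units; (iii) $\delta$ is compatible with multiplication, taking into account that the target is equipped with the opposite multiplication.

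For (i), I would use that $b\in B_x=A_{xx}^{{\rm co}H_{xx}}$ satisfies $\rho_{xx}(b)=b\ot 1_x^H$. Combined with \equref{3.1.1}, this gives $\rho_{xz}(ba)=ba_{[0]}\ot 1_x^Ha_{[1]}=ba_{[0]}\ot a_{[1]}$, whence
\[
\delta^x_{yz}(g)(ba)=ba_{[0]}\,g(a_{[1]})=b\,\delta^x_{yz}(g)(a).
\]
For (ii), using $i^x_y=\eta_y\circ\varepsilon_{xy}$ and the counit property of $\rho_{xy}$:
\[
\delta^x_{yy}(i^x_y)(a)=a_{[0]}\varepsilon_{xy}(a_{[1]})1_y=a,
\]
so $\delta^x_{yy}(i^x_y)$ is the identity of $A_{xy}$, which is the unit of $\Aa^x_{yy}=\Aa^{{\rm op}\,x}_{yy}$.

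The main step is (iii). Recall that in $\Aa$ the product of $f\in\Aa^x_{uz}$ and $f'\in\Aa^x_{zy}$ is $f'\circ f\in\Aa^x_{uy}$, so in $\Aa^{\rm op}$ the product of $\delta^x_{yz}(g)\in\Aa^{{\rm op}\,x}_{yz}=\Aa^x_{zy}$ with $\delta^x_{zu}(g')\in\Aa^{{\rm op}\,x}_{zu}=\Aa^x_{uz}$ is $\delta^x_{yz}(g)\circ\delta^x_{zu}(g')\in\Aa^x_{uy}$. Thus I need to verify
\[
\delta^x_{yu}(g\# g')=\delta^x_{yz}(g)\circ\delta^x_{zu}(g')\quad\text{in}\ {}_{B_x}\Hom(A_{xu},A_{xy}).
\]
Evaluating the right hand side at $a\in A_{xu}$ and twice applying \equref{7.1.1} gives
\[
\delta^x_{yz}(g)\bigl(a_{[0]}g'(a_{[1]})\bigr)=a_{[0][0]}\,g'(a_{[1]})_{[0]}\,g\bigl(a_{[0][1]}\,g'(a_{[1]})_{[1]}\bigr),
\]
where I used \equref{3.1.1} to compute $\rho_{xz}(a_{[0]}g'(a_{[1]}))$. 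For the left hand side, I plug $h=a_{[1]}$ into the definition \equref{7.1.0} of the Koppinen product to obtain
\[
\delta^x_{yu}(g\# g')(a)=a_{[0]}\,g'(a_{[1](2)})_{[0]}\,g\bigl(a_{[1](1)}\,g'(a_{[1](2)})_{[1]}\bigr).
\]
The two expressions are identified by the coaction-coassociativity identity $(\rho_{xu}\ot H_{xu})\rho_{xu}=(A_{xu}\ot \Delta_{xu})\rho_{xu}$, i.e.\ $a_{[0][0]}\ot a_{[0][1]}\ot a_{[1]}=a_{[0]}\ot a_{[1](1)}\ot a_{[1](2)}$; applying this to the first, second and fourth tensor positions above yields equality. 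I expect this identification to be the only real obstacle: one must keep track of which indices live in which $H_{??}$ and $A_{??}$ and invoke the correct comodule-category axiom. The rest is bookkeeping, and together (i)--(iii) show that $\delta$ is a morphism of clusters from $\Cc$ to $\Aa^{\rm op}$.
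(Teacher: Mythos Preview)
Your proof is correct and follows essentially the same approach as the paper: verify the unit condition and the multiplicativity condition by direct computation using \equref{3.1.1} and coassociativity of the $H$-coaction. You add an explicit check of left $B_x$-linearity of $\delta^x_{yz}(g)$, which the paper leaves implicit; this is a worthwhile addition, and your labeling of $g$ versus $g'$ in the multiplicativity step is in fact cleaner than the paper's (which contains a few swapped occurrences).
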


\begin{proof}
Let us first show that $\delta$ is multiplicative: for $g$ and $g'$ as above, we show that
$$\delta^x_{yu}(g\# g')= \delta^x_{yz}(g)\circ \delta^x_{zu}(g')
\in {}_{B_x}\Hom(A_{xu},A_{xy}).$$
For $a\in A_{xu}$, we have that
\begin{eqnarray*}
&&\hspace*{-10mm}
\bigl(\delta^x_{yz}(g)\circ \delta^x_{zu}(g')\bigr)(a)=
\delta^x_{yz}(g)\bigl(a_{[0]}g(a_{[1]})\bigr)
=a_{[0][0]}g(a_{[1]})_{[0]}g'\bigl(a_{[0][1]}g(a_{[1]})_{[1]}\bigr)\\
&=& a_{[0]}g(a_{[2]})_{[0]} g'\bigl( a_{[1]}g(a_{[2]})_{[1]}\bigr)
= a_{[0]}\bigl((g\# g')(a_{[1]})\bigr)=(\delta^x_{yu}(g\# g'))(a).
\end{eqnarray*}
$\delta$ preserves the units: $\delta^x_{yy}(i^x_y)$ is the identity map on $A_{xy}$
since
$\delta^x_{yy}(i^x_y)(a)=a_{[0]}i^x_y(a_{[1]})=a_{[0]}\varepsilon_{xy}(a_{[1]})1_y=a$,
for all $a\in A_{xy}$.
\end{proof}

\begin{proposition}\prlabel{7.2}
Assume that $A$ is an $H$-Galois category extension of $B=A^{{\rm co}H}$ (see \thref{3.5}).
 Then $\delta:\ \Cc=\#(H,A)\to \Aa^{\rm op}= {}_B\End(A)^{\rm op}$ is an isomorphism of clusters.
\end{proposition}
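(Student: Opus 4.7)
The plan is to produce an explicit two-sided inverse of $\delta$ built from the Galois maps $\gamma_{xz}$ guaranteed by \thref{3.5}(3). For each $f\in \Aa^x_{zy}={}_{B_x}\Hom(A_{xz},A_{xy})$, I would define $\phi^x_{yz}(f)\in \#(H_{xz},A_{zy})$ by
$$\phi^x_{yz}(f)(h)=\sum_i l_i(h)\,f(r_i(h)),\qquad h\in H_{xz},$$
where $\gamma_{xz}(h)=\sum_i l_i(h)\ot_{B_x} r_i(h)\in A_{zx}\ot_{B_x} A_{xz}$. The first small check is that the right-hand side only depends on the class of $\gamma_{xz}(h)$ in the tensor product over $B_x$: this is equivalent to the bilinear map $(a,a')\mapsto a\,f(a')$ descending to $A_{zx}\ot_{B_x}A_{xz}\to A_{zy}$, which is immediate from the left $B_x$-linearity of $f$.

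Next I would verify $\delta^x_{yz}\circ \phi^x_{yz}=\Id$. For $a\in A_{xz}$,
$$\bigl(\delta^x_{yz}(\phi^x_{yz}(f))\bigr)(a)=a_{[0]}\sum_i l_i(a_{[1]})\,f(r_i(a_{[1]})),$$
and this is the image of $\sum_i a_{[0]}l_i(a_{[1]})\ot_{B_x} r_i(a_{[1]})\in A_{xx}\ot_{B_x}A_{xz}$ under the map $b\ot_{B_x} a'\mapsto b\,f(a')$. By \equref{3.5.2} (with $y$ replaced by $z$) the tensor collapses to $1_x\ot_{B_x}a$, so the result is $f(a)$ as required.

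Conversely, for $\phi^x_{yz}\circ \delta^x_{yz}=\Id$, take $g\in \#(H_{xz},A_{zy})$ and $h\in H_{xz}$. Then
$$\bigl(\phi^x_{yz}(\delta^x_{yz}(g))\bigr)(h)=\sum_i l_i(h)\,r_i(h)_{[0]}\,g(r_i(h)_{[1]}),$$
which is the image of $\sum_i l_i(h)r_i(h)_{[0]}\ot r_i(h)_{[1]}\in A_{zz}\ot H_{xz}$ under $c\ot h'\mapsto c\,g(h')$. By \equref{3.5.1} (again with $y$ replaced by $z$) this tensor equals $1_z\ot h$, so the expression reduces to $g(h)$. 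Combined with \prref{7.1}, which already shows $\delta$ is a morphism of clusters, the two inversions yield the claim.

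I don't expect a serious obstacle here: once the formula for $\phi$ is written down, both directions are one-line applications of the defining identities \equref{3.5.1} and \equref{3.5.2} of the translation map $\gamma$. The only thing demanding care is the bookkeeping, namely tracking which of $A_{zx}\ot_{B_x}A_{xz}$, $A_{xx}\ot_{B_x}A_{xz}$ or $A_{zz}\ot H_{xz}$ each intermediate expression lives in, and confirming well-definedness modulo the $B_x$-relations in each case.
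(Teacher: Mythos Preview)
Your proof is correct and follows the same strategy as the paper: both construct the inverse of $\delta^x_{yz}$ by the formula $f\mapsto\bigl(h\mapsto\sum_i l_i(h)f(r_i(h))\bigr)$ and verify the two compositions using the defining identities of $\gamma$. For the direction $\phi\circ\delta=\Id$ you invoke \equref{3.5.1} directly, whereas the paper takes a slightly longer route via \equref{3.7.3} and \equref{3.7.4}; your shortcut is valid since the map $c\ot h'\mapsto c\,g(h')$ is well-defined on $A_{zz}\ot H_{xz}$, so there is no substantive difference.
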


\begin{proof}
Consider the morphisms
$$\gamma_{xy}:\ H_{xy}\to A_{yx}\ot_{B_x} A_{xy},~~
\gamma_{xy}(h)=\sum_i l_i(h)\ot_{B_x} r_i(h),$$
from condition (3) in \thref{3.5}. We will show that
$$\tilde{\delta}^x_{yz}:\  {}_{B_x}\Hom(A_{xz},A_{xy})\to \#(H_{xz},A_{zy}),~~~
\tilde{\delta}^x_{yz}(\varphi)(h)=\sum_i l_i(h)\varphi(r_i(h))$$
is the inverse of ${\delta}^x_{yz}$. For all $\varphi\in {}_{B_x}\Hom(A_{xz},A_{xy})$,
$a\in A_{xz}$, $g\in \#(H_{xz},A_{zy})$ and $h\in H_{xz}$, we have that
\begin{eqnarray*}
&&\hspace*{-2cm}
({\delta}^x_{yz} \circ \tilde{\delta}^x_{yz})(\varphi)(a)=
a_{[0]}\bigl( \tilde{\delta}^x_{yz}(a_{[1]})\bigr)
= \sum_i a_{[0]} l_i(a_{[1]})\varphi(r_i(a_{[1]}))
\equal{\equref{3.5.2}} \varphi(a);\\
&&\hspace*{-2cm}
(\tilde{\delta}^x_{yz}\circ {\delta}^x_{yz})(g)(h)
= \sum_i l_i(h) {\delta}^x_{yz}(g)(r_i(h))
= \sum_i l_i(h)r_i(h)_{[0]}g(r_i(h)_{[1]})\\
&\equal{\equref{3.7.3}}&
\sum_i l_i(h_{(1)})r_i(h_{(1)})g(h_{(2)})
\equal{\equref{3.7.4}} \varepsilon_{xz}(h_{(1)})1_zg(h_{(2)})=g(h).
\end{eqnarray*}
\vspace*{-1cm}
\end{proof}

Our next goal is to prove the converse of \prref{7.2}. Some additional finiteness
conditions will be needed. 

\begin{proposition}\prlabel{7.3}
Let $A$ be a right $H$-comodule category and let $B=A^{{\rm co}H}$. Assume
that the following conditions are satisfied
\begin{enumerate}
\item $A$ locally finite as a left $B$-module;
\item $H$ is locally finite;
\item $\delta^x_{xy}$ and  $\delta^x_{yy}$ are bijective, for all $x,y\in X$.
\end{enumerate}
Then $A$ is an $H$-Galois category extension of $B=A^{{\rm co}H}$.
\end{proposition}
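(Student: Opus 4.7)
The strategy is to manufacture, for each pair $x,y\in X$, a map
$\gamma_{xy}\colon H_{xy}\to A_{yx}\ot_{B_x}A_{xy}$ satisfying conditions
\equref{3.5.1} and \equref{3.5.2} of \thref{3.5}(3); once this is done \thref{3.5} itself concludes that $A$ is an $H$-Galois category extension of $B$. The three hypotheses play complementary roles: local finiteness of $A$ over $B$ provides a dual basis of $A_{xy}$ over $B_x$; bijectivity of $\delta^x_{xy}$ produces the $A_{yx}$-factor of $\gamma_{xy}$ and delivers \equref{3.5.2} almost for free; bijectivity of $\delta^x_{yy}$, combined with local finiteness of $H$, is what is needed for the harder identity \equref{3.5.1}.

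Fix $x,y$ and a finite dual basis $\{e_i,e_i^*\}_i$ of the left $B_x$-module $A_{xy}$. Since each $i_x\circ e_i^*$ belongs to ${}_{B_x}\Hom(A_{xy},A_{xx})$, the codomain of the bijection $\delta^x_{xy}$, set $g_i:=(\delta^x_{xy})^{-1}(i_x\circ e_i^*)\in\Hom(H_{xy},A_{yx})$, so that $a_{[0]}g_i(a_{[1]})=i_x(e_i^*(a))$ for every $a\in A_{xy}$. Define
$$\gamma_{xy}(h):=\sum_i g_i(h)\ot_{B_x}e_i.$$
Condition \equref{3.5.2} then drops out from the defining property of $g_i$ together with the dual basis identity, shifting the scalar $e_i^*(a)\in B_x$ across the tensor product.

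The substantive step is \equref{3.5.1}, i.e.\ $\sum_i g_i(h)e_{i[0]}\ot e_{i[1]}=1_y\ot h$ in $A_{yy}\ot H_{xy}$. Because $H$ is locally finite, $H_{xy}$ is finitely generated projective as a $k$-module, so the evaluation pairing embeds $A_{yy}\ot H_{xy}$ into $\Hom(K_{yx},A_{yy})$ with $K_{yx}=H_{xy}^*$; it therefore suffices to verify, for each $k\in K_{yx}$, that
$$\sum_i g_i(h)\lan k,e_{i[1]}\ran e_{i[0]}=\lan k,h\ran 1_y.$$
The key auxiliary object is $\rho_k\colon A_{xy}\to A_{xy}$, $\rho_k(a):=\lan k,a_{[1]}\ran a_{[0]}$, which is left $B_x$-linear (immediate from $b\in B_x$ being coinvariant), hence lies in $\Aa^x_{yy}={}_{B_x}\Hom(A_{xy},A_{xy})$. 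A direct computation shows that $\delta^x_{yy}$ sends $h\mapsto\lan k,h\ran 1_y$ to $\rho_k$; a second computation, using the defining property of $g_i$, the $B_x$-linearity of $\rho_k$, and the dual basis identity in sequence, shows that the element $\Xi_k(h):=\sum_i g_i(h)\rho_k(e_i)$ also satisfies $\delta^x_{yy}(\Xi_k)=\rho_k$. Bijectivity of $\delta^x_{yy}$ then forces $\Xi_k(h)=\lan k,h\ran 1_y$, which is exactly the required scalar identity.

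The main obstacle is the asymmetry just described: the construction of $\gamma_{xy}$ is rigged from the $\delta^x_{xy}$ side and yields \equref{3.5.2} automatically, whereas \equref{3.5.1} cannot be obtained by symmetry. Its proof genuinely requires the second bijection $\delta^x_{yy}$ (to identify the unique lift of $\rho_k$ back to $\Hom(H_{xy},A_{yy})$) together with the local finiteness of $H$ (to reduce the identity in $A_{yy}\ot H_{xy}$ to a family of scalar identities indexed by $K_{yx}$). Without both ingredients one obtains only a one-sided inverse to the canonical map $\can^y_{xy}$, which would not be enough to invoke \thref{3.5}.
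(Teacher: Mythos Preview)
Your proposal is correct and follows essentially the same route as the paper: the same definition of $\gamma_{xy}$ via $(\delta^x_{xy})^{-1}$ applied to the dual-basis functionals, the same immediate verification of \equref{3.5.2}, and the same use of the bijection $\delta^x_{yy}$ together with local finiteness of $H$ to obtain \equref{3.5.1}. The only difference is organizational: the paper fixes a finite dual basis $\{h_l,h_l^*\}$ of $H_{xy}$ and rewrites the coaction as $\rho(a)=\sum_l\delta^x_{yy}(\eta_y\circ h_l^*)(a)\ot h_l$, then invokes a right $A$-linearity identity $\tilde{\delta}^x_{yy}(\varphi\cdot a')=\tilde{\delta}^x_{xy}(\varphi)\cdot a'$; you instead test against an arbitrary $k\in K_{yx}$ via the embedding $A_{yy}\ot H_{xy}\hookrightarrow\Hom(K_{yx},A_{yy})$ and use left $B_x$-linearity of $\rho_k$ directly. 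Since your $\rho_k$ is exactly the paper's $\delta^x_{yy}(\eta_y\circ h_l^*)$ with $k=h_l^*$, the two computations are the same, just packaged differently.
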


\begin{proof}
Let
$\sum_j a^*_j\ot_{B_x} a_j\in {}_{B_x}\Hom(A_{xy},B_x)\ot_{B_x} A_{xy}$
be a finite dual basis of $A_{xy}$ as a left $B_x$-module. We define
$\gamma_{xy}:\ H_{xy}\to A_{yx}\ot_{B_x} A_{xy}$ by the formula
\begin{equation}\eqlabel{7.3.1}
\gamma_{xy}(h)=\sum_i l_i(h)\ot_{B_x} r_i(h)=
\sum_j \tilde{\delta}^x_{xy}(i_x\circ a_j^*)(h)\ot_{B_x} a_j.
\end{equation}
Recall that $i_x:\ B_x\to A_{xx}$, so that $i_x\circ a_j^*\in {}_{B_x}\Hom(A_{xy},A_{xx})$.
We have to show that (\ref{eq:3.5.1}-\ref{eq:3.5.2}) are satisfied. \equref{3.5.2}
follows from the fact that $\tilde{\delta}^x_{xy}$ is a right inverse of ${\delta}^x_{xy}$:
for all $f\in {}_{B_x}\Hom(A_{xy},A_{xx})$ and $a\in A_{xy}$, we have
\begin{equation}\eqlabel{7.3.2}
f(a)=(({\delta}^x_{xy}\circ \tilde{\delta}^x_{xy})(f))(a)=a_{[0]}(\tilde{\delta}^x_{xy}(f))(a_{[1]}),
\end{equation}
hence
\begin{eqnarray*}
&&\hspace*{-2cm}
\sum_i a_{[0]}l_i(a_{[1]})\ot_{B_x}r_i(a_{[1]})\equal{\equref{7.3.1}}
\sum_{i,j}   a_{[0]} \tilde{\delta}^x_{xy}(i_x\circ a_j^*)(a_{[1]})\ot_{B_x} a_j\\
&\equal{\equref{7.3.2}}&
\sum_j a^*_j(a)1_x\ot_{B_x} a_j=1_x\ot_{B_x}a.
\end{eqnarray*}
Before we are able to prove \equref{3.5.1}, we need two observations. The first observation
is that we can reformulate the right coaction on $A$ in terms of the dual basis
$\sum_l h_l^*\ot h_l\in H^*_{xy}\ot H_{xy}$ of $H_{xy}$. For all $a\in A_{xy}$, we have
that
\begin{equation}\eqlabel{7.3.3}
\rho(a)=\sum_l a_{[0]}\lan h^*_l,a_{[1]}\ran \ot h_l
= \sum_l \delta^x_{yy}(\eta_y\circ h^*_l)(a)\ot h_l.
\end{equation}
The second observation is that $\delta$ is right $A$-linear in the following sense.
For $g\in \Hom(H_{xz},A_{zy})$, $\varphi\in {}_{B_x}\Hom(A_{xz},A_{xy})$ and $a'\in A_{yu}$,
we define $g\cdot a\in \Hom(H_{xz},A_{zu})$ and
$\varphi\cdot a'\in {}_{B_x}\Hom(A_{xz},A_{xu})$ by right multiplication:
$$(g\cdot a')(h)=g(h)a'~~~{\rm and}~~~(\varphi\cdot a')(a)=\varphi(a)a'.$$
It is then easily computed that
$$\delta^x_{uz}(g\cdot a')(a)=a_{[0]}(g\cdot a')(a_{[1]})=a_{[0]}g(a_{[1]})a'
=\delta^x_{yz}(g)(a)a'=(\delta^x_{yz}(g)\cdot a')(a).$$
Now $\delta^x_{xy}$ and $\delta^x_{yy}$ are invertible, so we have, for
$\varphi\in {}_{B_x}\Hom(A_{xy},A_{xx})$ and $a'\in A_{xy}$, that
\begin{equation}\eqlabel{7.3.4}
\tilde{\delta}^x_{yy}(\varphi\cdot a')=\tilde{\delta}^x_{xy}(\varphi)\cdot a'.
\end{equation}
For $h\in H_{xy}$, we compute that
\begin{eqnarray*}
L&:=&\sum_i l_i(h)r_i(h)_{[0]}\ot r_i(h)_{[1]}\\
&\equal{\equref{7.3.1}}&
\tilde{\delta}^x_{xy}(i_x\circ a_j^*)(h) a_{j[0]}\ot a_{j[1]}\\
&\equal{\equref{7.3.3}}&
\sum_{j,l} \Bigl(\tilde{\delta}^x_{xy}(i_x\circ a_j^*)(h)\Bigr)
\Bigl(\delta^x_{yy}(\eta_y\circ h^*_l)(a_j)\Bigr)\ot h_l
\end{eqnarray*}
Now let $a'= \delta^x_{yy}(\eta_y\circ h^*_l)(a_j)\in A_{xy}$. Applying
\equref{7.3.4}, we obtain that
$$L=\sum_{j,l} \tilde{\delta}^x_{yy}\bigl((i_x\circ a_j^*)\cdot a'\bigr)(h)\ot h_l.$$
For all $a''\in A_{xy}$, we have that
$$\sum_j\bigl((i_x\circ a_j^*)\cdot a'\bigr)(a'')
=\sum_j a^*_j(a'')1_x \delta^x_{yy}(\eta_y\circ h^*_l)(a_j)=\delta^x_{yy}(\eta_y\circ h^*_l)(a''),$$
and
\begin{eqnarray*}
L&=& 
\sum_l \Bigl(\tilde{\delta}^x_{yy}\bigl(\delta^x_{yy}(\eta_y\circ h^*_l)\bigr)\Bigr)(h)\ot h_l\\
&=&\sum_l (\eta_y\circ h_l^*)(h)\ot h_l=\sum_l \lan h^*_l,h\ran 1_y\ot h_l=1_y\ot h.
\end{eqnarray*}
completing the proof of \equref{3.5.1}.
\end{proof}

\subsection{The smash product versus the Koppinen smash product}\selabel{7.2}
We briefly return to the classical situation, where $X$ is a singleton. Consider a
(finitely generated projective) bialgebra $H$ coacting from the right on a $k$-module $M$. 
The usual way to define an action of the dual bialgebra $H^*$ is via the formula
\begin{equation}\eqlabel{action}
h^*\cdot m=\lan h^*,m_{[1]}\ran m_{[0]}.
\end{equation}
If $M=A$ is a right $H$-comodule algebra, then $A$ is a left $H^*$-module algebra.
Here we need the convolution product and the convolution coproduct on $H^*$.\\
We can also consider the formula 
\begin{equation}\eqlabel{action2}
m\cdot h^*=\lan h^*,m_{[1]}\ran m_{[0]}.
\end{equation}
It makes
$A$ into a right $H^*$-module algebra, now $H^*$ equipped with anti-convolution
product, but convolution coproduct.\\
Passing to the categorical situation where $X$ is no longer a singleton, \equref{action}
no longer makes sense. There are two ways to fix this problem; one may use the
antipode (if it exists), we come back to this in \reref{7.8}. An alternative solution
is to introduce some op-arguments. From now on, $H$ is a locally finite semi-Hopf
category, with corresponding dual semi-Hopf category $K$, as in \seref{1.3}.
The proof of \prref{7.4} is a direct verification.

\begin{proposition}\prlabel{7.4}
Let $A$ be a right $H$-module category. Then 
$A^{\rm op}$ is a left $K^{\rm op}$-module category, with
\begin{equation}\eqlabel{7.5.1}
k\cdot a=\lan k, a_{[1]}\ran a_{[0]},
\end{equation}
for all $k\in K_{xy}$ and $a\in A_{xy}^{\rm op}=A_{yx}$.
\end{proposition}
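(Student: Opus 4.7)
The statement is a direct verification of the axioms of a left $K^{\rm op}$-module category on $A^{\rm op}$, so the plan is simply to check these axioms one by one from the proposed formula $k\cdot a = \langle k, a_{[1]}\rangle a_{[0]}$, splitting the work into (a) the fibrewise module axioms and (b) the module-category compatibility with the multiplication on $A^{\rm op}$ and the comultiplication on $K^{\rm op}$.

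First, I would fix $x,y\in X$ and show that $A^{\rm op}_{xy}=A_{yx}$ is a left $K^{\rm op}_{xy}$-module. Expanding $k\cdot(l\cdot a)=\langle l,a_{[1]}\rangle\langle k,a_{[0][1]}\rangle a_{[0][0]}$ and applying coassociativity of $\rho_{yx}$ rewrites this as $\langle k,a_{[1]}\rangle\langle l,a_{[2]}\rangle a_{[0]}$. The multiplication on $K^{\rm op}_{xy}$ is the ordinary convolution product on $H^*_{yx}$ (the opposite of the opposite-convolution product used on $K_{xy}$ in \seref{1.3}), so this scalar is precisely $\langle k\cdot_{K^{\rm op}}l,a_{[1]}\rangle$, yielding $(k\cdot_{K^{\rm op}}l)\cdot a$. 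The unit axiom is immediate from $1^{K^{\rm op}}_{xy}=\varepsilon_{yx}$ together with the counit property $\langle \varepsilon_{yx},a_{[1]}\rangle a_{[0]}=a$.

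Next, for the multiplicative compatibility, fix $x,y,z\in X$, $a\in A^{\rm op}_{xy}=A_{yx}$, $a'\in A^{\rm op}_{yz}=A_{zy}$, and $k\in K^{\rm op}_{xz}=K_{xz}$. The comodule-category axiom \equref{3.1.1} gives $\rho_{zx}(a'a)=a'_{[0]}a_{[0]}\otimes a'_{[1]}a_{[1]}$. Combining this with the characterization $\langle k_{(1,x,y)},h'\rangle\langle k_{(2,y,z)},h\rangle=\langle k,hh'\rangle$ (with $h=a'_{[1]}\in H_{zy}$ and $h'=a_{[1]}\in H_{yx}$) from \seref{1.3} transforms $k\cdot(a\cdot_{\rm op}a')=k\cdot(a'a)$ into $\langle k_{(1,x,y)},a_{[1]}\rangle\langle k_{(2,y,z)},a'_{[1]}\rangle a'_{[0]}a_{[0]}$, which is exactly $(k_{(1,x,y)}\cdot a)\cdot_{\rm op}(k_{(2,y,z)}\cdot a')$, the op-counterpart of \equref{1.1.2}. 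The unit condition $k\cdot 1^A_x=\varepsilon_x(k)1^A_x$ follows at once from $\rho_{xx}(1_x)=1_x\otimes 1_x$ and $\varepsilon_x(k)=\langle k,1_x\rangle$.

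The one subtlety worth watching is index bookkeeping: reversing to $A^{\rm op}$ flips the composition order, so in the final display the factors $k_{(1,x,y)}\cdot a$ and $k_{(2,y,z)}\cdot a'$ end up on the opposite sides compared to \equref{1.1.2}, and this flip is precisely absorbed by the fact that $K^{\rm op}_{xy}$ carries the opposite multiplication of $K_{xy}$. Once these conventions are matched, everything reduces to evaluating pairings and using coassociativity, with no further difficulty.
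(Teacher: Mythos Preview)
Your proposal is correct and matches the paper's approach: the paper's entire proof is the single sentence ``The proof of \prref{7.4} is a direct verification,'' and you have supplied exactly that verification. One small quibble: your final paragraph attributes the index flip in the compatibility relation to the opposite multiplication on $K^{\rm op}_{xy}$, but that opposite multiplication is only needed for the associativity check in part (a); the compatibility \equref{1.1.2} involves only the comultiplication of $K$, which is unchanged in $K^{\rm op}$, and your computation already shows directly that the left-module-category identity $k\cdot(a\cdot_{\rm op}a')=(k_{(1,x,y)}\cdot a)\cdot_{\rm op}(k_{(2,y,z)}\cdot a')$ holds with the original $\Delta_{xyz}$.
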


The associated smash product
$\Bb'=A^{\rm op}\#K^{\rm op}$ is described as follows, see \equref{6.3.1}:
$\Bb'^{x}_{yz}=A_{zy}\#K_{zx}$, with multiplication ($a'\in A_{uz}$, $k'\in K_{ux}$)
\begin{equation}\eqlabel{7.5.2}
(a\# k)(a'\# k')=  \lan k_{(1,z,u)},a'_{[1]}\ran a'_{[0]}a\# k' k_{(2,u,x)}.
\end{equation}
Observe that $\End_{B^{\rm op}}(A^{\rm op})={}_B\End(A)^{\rm op}=\Aa^{\rm op}$. Indeed,
$$\Hom_{B_x}^{\rm op}(A^{\rm op}_{zx}, A^{\rm op}_{yx})=
{}_{B_x}\Hom(A_{xz},A_{xy})=\Aa^x_{zy}=\Aa^{{\rm op}x}_{yz}.$$
Applying \equref{6.3.2}, we obtain a morphism of clusters $\kappa':\ \Bb'\to \Aa^{\rm op}$,
\begin{equation}\eqlabel{7.5.2a}
\kappa'^{x}_{yz}:\ A_{zy}\# K_{zx}\to {}_{B_x}\Hom(A_{xz},A_{xy}),~~
\kappa'^{x}_{yz}(a\# k)(a')=(k\cdot a')a.
\end{equation}

\begin{proposition}\prlabel{7.6}
Let $H$ be a locally finite $k$-linear semi-Hopf category, and let $A$ be a right $H$-comodule category.
We have an isomorphism of clusters $\beta:\ \Bb'=A^{\rm op}\# K^{\rm op}\to \Cc=\Hom(H,A)$,which is
such that the diagram
$$\xymatrix{\Bb'=A^{\rm op}\# K^{\rm op}\ar[d]_{\beta}\ar[rr]^{\kappa'}&&
\Aa^{\rm op}={}_B\End(A)^{\rm op}\\
\Cc=\#(H,A)\ar[rru]_{\delta}&&}$$
commutes.
\end{proposition}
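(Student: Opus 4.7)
The plan is to exhibit $\beta$ explicitly and then verify, in turn, that it is a componentwise $k$-linear isomorphism, that it respects the multiplication and the unit of the two clusters, and finally that the triangle with $\delta$ and $\kappa'$ commutes. The natural candidate is
$$\beta^x_{yz}:\ A_{zy}\# K_{zx}\to \#(H_{xz},A_{zy}),\qquad \beta^x_{yz}(a\# k)(h)=\lan k,h\ran a.$$
Since $H$ is locally finite, each $H_{xz}$ is a finitely generated projective $k$-module, and $K_{zx}=H^*_{xz}$, so the canonical map $A_{zy}\ot H^*_{xz}\to \Hom(H_{xz},A_{zy})$ is bijective; this settles the first point.

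Commutativity of the triangle is immediate: for $a'\in A_{xz}$,
$$(\delta^x_{yz}\circ \beta^x_{yz})(a\# k)(a')=a'_{[0]}\,\beta^x_{yz}(a\# k)(a'_{[1]})=\lan k,a'_{[1]}\ran a'_{[0]}a,$$
and $\kappa'^{x}_{yz}(a\# k)(a')=(k\cdot a')a=\lan k,a'_{[1]}\ran a'_{[0]}a$ by \equref{7.5.1} and \equref{7.5.2a}. For the unit, one uses $1_{yx}=\varepsilon_{xy}$, so that $\beta^x_{yy}(1_y\# 1_{yx})(h)=\varepsilon_{xy}(h)1_y=i^x_y(h)$.

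The main calculation is multiplicativity. Fix $(a\# k)\in A_{zy}\# K_{zx}$ and $(a'\# k')\in A_{uz}\# K_{ux}$, and evaluate both sides at $h\in H_{xu}$. Starting from the product rule \equref{7.5.2} in $\Bb'$, the left-hand side unfolds to
$$\beta^x_{yu}\bigl((a\# k)(a'\# k')\bigr)(h)=\lan k' k_{(2,u,x)},h\ran\lan k_{(1,z,u)},a'_{[1]}\ran a'_{[0]}a,$$
which after applying the opposite-convolution formula $\lan k' k_{(2,u,x)},h\ran=\lan k',h_{(2)}\ran\lan k_{(2,u,x)},h_{(1)}\ran$ from \seref{1.3} becomes
$$\lan k',h_{(2)}\ran\lan k_{(2,u,x)},h_{(1)}\ran\lan k_{(1,z,u)},a'_{[1]}\ran a'_{[0]}a.$$
On the other hand, writing $g=\beta^x_{yz}(a\# k)$ and $g'=\beta^x_{zu}(a'\# k')$, the Koppinen product \equref{7.1.0} gives
$$(g\# g')(h)=\lan k',h_{(2)}\ran a'_{[0]}\lan k,h_{(1)}a'_{[1]}\ran a,$$
and the characterization of $\Delta_{zux}$ from \seref{1.3}, applied to $h_{(1)}\in H_{xu}$ and $a'_{[1]}\in H_{uz}$, rewrites $\lan k,h_{(1)}a'_{[1]}\ran$ as $\lan k_{(1,z,u)},a'_{[1]}\ran\lan k_{(2,u,x)},h_{(1)}\ran$; the two expressions coincide.

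The only real obstacle is organizing the bookkeeping in this multiplicativity check: one has to track four indices, distinguish the coproducts on $H$ and on $K$, and line up the opposite-convolution product with the correct Sweedler leg. Once the two defining formulas from \seref{1.3} are applied in the right order, both sides collapse to the same expression and no further ingredient is needed.
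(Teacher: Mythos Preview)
Your proof is correct and follows the paper's own argument essentially verbatim: the same explicit formula for $\beta$, bijectivity from local finiteness of $H$, and the multiplicativity check via the opposite-convolution and comultiplication identities from \seref{1.3}. The only cosmetic difference is the order of the verifications; the paper checks multiplicativity before the triangle, while you do the reverse.
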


\begin{proof}
It is well-known that
$\beta^x_{yz}:\ A_{zy}\# K_{zx}\to \Hom(H_{xz},A_{zy})$ and $\tilde{\beta}^x_{yz}:\ \Hom(H_{xz},A_{zy})
\to A_{zy}\# K_{zx}$ given by the formulas
\begin{equation}\eqlabel{7.6.1}
\beta^x_{yz}(a\# k)(h)=\lan k,h\ran a ~~{\rm and}~~\tilde{\beta}^x_{yz}(f)=\sum_i f(h_i)\ot k_i,
\end{equation}
where $\sum_i h_i\ot k_i$ is the dual basis of $H_{xz}$, are inverses. It is clear that $\beta^x_{yy}(1^A_y\# \varepsilon_{xz})=
\eta_y\circ \varepsilon_{xz}$. Let us show that $\beta$ preserves the mulitplication.
Take $a\# k\in A_{zy}\# K_{zx}$, $a'\# k'\in A_{uz}\# K_{ux}$, and write
$\beta^x_{yz}(a\# k)=g$, $\beta^x_{zu}(a'\# k')=g'$. For all $h\in H_{xu}$, we have that
\begin{eqnarray*}
&&\hspace*{-2cm}
\beta^x_{yu}\bigl((a\# k)(a'\# k')\bigr)(h)\equal{(\ref{eq:7.5.2},\ref{eq:7.6.1})}
\lan k_{(1,z,u)},a'_{[1]}\ran \lan k'k_{(2,u,x)},h\ran a'a_{[0]}\\
&=& \lan k_{(1,z,u)},a'_{[1]}\ran\lan k',h_{(2)}\ran \lan k_{(2,u,x)},h_{(1)}\ran a'a_{[0]}\\
&=& \lan k, h_{(1)}a'_{[1]}\ran\lan k',h_{(2)}\ran a'a_{[0]}\\
&\equal{\equref{7.6.1}}& g'(h_{(2)})_{[0]}g\bigl(h_{(1)}g'(h_{(2)})_{[1]}\bigr)
\equal{\equref{7.1.0}}(g\# g')(h).
\end{eqnarray*}
We are left to show that $\delta^x_{yz}\circ \beta^x_{yz}=\kappa'^{x}_{yz}$. For all $a'\in A_{xz}$, we have that
\begin{eqnarray*}
&&\hspace*{-2cm}
\delta^x_{yz}\bigl(\beta^x_{yz}(a\# k)\bigr)(a')\equal{\equref{7.1.1}} a'_{[0]}\bigl(\beta^x_{yz}(a\# k)(a'_{[1]})\bigr)\\
&\equal{\equref{7.6.1}}& \lan k,a'_{[1]}\ran a'_{[0]}a\equal{\equref{7.5.2a}}\kappa'^{x}_{yz} (a\# k)(a').
\end{eqnarray*}
\end{proof}

We now summarize our results.

\begin{theorem}\thlabel{7.7}
Let $H$ be a locally finite semi-Hopf category, with dual $K=H^*$
and let $A$ be a right $H$-comodule category. Then $A^{\rm op}$ is a right $K^{{\rm op}}$-module category,
see \equref{7.5.1}, and  $A^{K^{\rm op}}=A^{{\rm co}H}=B$.
The following assertions are equivalent.
\begin{enumerate}
\item $A$ is an $H$-Galois category extension of $B$, that is, 
$\can^z_{xy}:\ A_{zx}\ot_{B_x} A_{xy}\to A_{zy}\ot H_{xy}$ is bijective, for all $x,y,z\in X$,
see \thref{3.5};
\item $A^{\rm op}$ is a dual left $K^{\rm op}$-Galois category extension of $B=A^{K^{\rm op}}$, that is,
$\kappa':\ \Bb'=A^{\rm op}\# K^{\rm op}\to \Aa^{\rm op}={}_B\End(A)^{\rm op}$ is an isomorphism of $k$-linear clusters;
\item $\delta:\ \Cc\to \#(H,A)\to \Aa^{\rm op}={}_B\End(A)^{\rm op}$ is an isomorphism of $k$-linear clusters;
\item $\kappa'^{x}_{yx}$ and $\kappa'^{x}_{yy}$ are bijective, for all $x,y\in X$;
\item $\delta^x_{yx}$ and $\delta^{x}_{yy}$ are bijective, for all $x,y\in X$.
\end{enumerate}
\end{theorem}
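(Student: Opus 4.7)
The plan is to run the equivalences as a cycle leaning on Propositions 7.2, 7.3, and 7.6. The linking device is \prref{7.6}: since $\beta:\Bb'\to \Cc$ is an isomorphism of clusters and $\kappa'=\delta\circ\beta$, each component $\kappa'^x_{yz}$ is bijective precisely when $\delta^x_{yz}$ is, and $\kappa'$ is a cluster isomorphism precisely when $\delta$ is. This delivers the equivalences (2)$\Leftrightarrow$(3) and (4)$\Leftrightarrow$(5) with no further work, and (2)$\Rightarrow$(4) is tautological because (4) is merely the restriction of (2) to certain index patterns.

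What remains is the pair (1)$\Rightarrow$(3) and (5)$\Rightarrow$(1). The implication (1)$\Rightarrow$(3) is \prref{7.2}: from the translation maps $\gamma_{xy}$ supplied by the $H$-Galois property, the formula
$$\tilde\delta^x_{yz}(\varphi)(h)=\sum_i l_i(h)\varphi(r_i(h))$$
produces a two-sided inverse of $\delta^x_{yz}$, using \equref{3.5.2} on one side and \equref{3.7.3}, \equref{3.7.4} on the other, so that all components of $\delta$ are bijective and $\delta$ is a cluster isomorphism.

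For the converse (5)$\Rightarrow$(1), I would appeal to \prref{7.3}. Its hypotheses require $H$ to be locally finite (given) and $A$ to be locally finite as a left $B$-module; the latter supplies, for each pair $(x,y)$, a finite dual basis $\sum_j a_j^*\ot_{B_x} a_j$ of $A_{xy}$ as a left $B_x$-module. One then sets
$$\gamma_{xy}(h)=\sum_j \tilde\delta^x_{xy}(i_x\circ a_j^*)(h)\ot_{B_x}a_j$$
and checks \equref{3.5.1} and \equref{3.5.2} exactly as in \prref{7.3}, using the bijectivity of the diagonal and off-diagonal $\delta$-components provided by condition (5) after relabelling the universally quantified indices. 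This closes the cycle (1)$\Rightarrow$(3)$\Leftrightarrow$(2)$\Rightarrow$(4)$\Leftrightarrow$(5)$\Rightarrow$(1).

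The main obstacle is bookkeeping rather than new content: one must ensure that local finiteness of $A$ as a left $B$-module is in force (so that \prref{7.3} can be invoked at all), and match the precise index pattern of (5) to the set of $\delta$-components required in the proof of \prref{7.3}. The algebraic heart of the argument is already contained in Propositions \ref{pr:7.2} and \ref{pr:7.3}, and the remaining work is to assemble the cycle and translate between $\kappa'$- and $\delta$-statements via \prref{7.6}.
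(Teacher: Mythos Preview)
Your proposal is correct and follows exactly the paper's own route: the paper's proof is the one-line chain $(1)\Rightarrow(3)$ via \prref{7.2}, $(3)\Rightarrow(5)$ trivially, $(5)\Rightarrow(1)$ via \prref{7.3}, and $(2)\Leftrightarrow(3)$, $(4)\Leftrightarrow(5)$ via \prref{7.6}. Your observation about the missing hypothesis is on point: \prref{7.3} genuinely requires $A$ to be locally finite as a left $B$-module, and the paper's statement of \thref{7.7} omits this assumption while still citing \prref{7.3} for $(5)\Rightarrow(1)$; so the gap you flag is present in the paper itself, not in your argument.
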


\begin{proof}
$\ul{(1)\Rightarrow (3)}$: \prref{7.2}; $\ul{(3)\Rightarrow (5)}$ is trivial; $\ul{(5)\Rightarrow (1)}$: \prref{7.3};
$\ul{(2)\Leftrightarrow (3)}$ and $\ul{(4)\Leftrightarrow (5)}$: \prref{7.6}.
\end{proof}

\begin{remark}\relabel{7.8}
In the preceding Sections, we have provided a detailed account of the righthanded theory, ending with a brief
description of the lefthanded theory at the end of each Section. It may come as a surprise that we have to switch
from right to left in \thref{7.7}: in order to give alternative characterizations of $A$ being an $H$-Galois category
extension of $B$, we need a smash product obtained from a left action. If $H$ has an antipode, then we can
also work with a right action:  $A$ is a right $K^{\rm op}$-module category, with action is given by the formula
\begin{equation}\eqlabel{7.5.5}
a\rightact k=\lan k, S_{xy}(a_{[1]})\ran a_{[0]},
\end{equation}
for all $k\in K_{xy}$ and $a\in A_{xy}$.
The associated smash product
$\ol{\Bb}= K^{\rm op}\# A$ is described as follows:
$\ol{\Bb}^x_{yz}= K_{xy}\# A_{yz}$, with multiplication ($k'\in K_{xz}$, $a'\in A_{zu}$)
\begin{equation}\eqlabel{7.5.6}
(k\# a)(k'\# a')=k'_{(1,x,y)}k\# (a\rightact k'_{(2,y,z)})a'.
\end{equation}
Applying \equref{6.1.2}, we have
a morphism of clusters $\ol{\kappa}:\ \ol{\Bb}\to {}_B\End(A)=\Aa$,
$$\ol{\kappa}^x_{yz}:\ K_{xy}\# A_{yz}\to {A}^x_{yz}={}_{B_x}\Hom(A_{xy},A_{xz}),~~
\ol{\kappa}^x_{yz}(k\# a)(a')=(a'\rightact k)a.$$
In \prref{7.9}, we will see that this brings nothing new: the smash products $\ol{\Bb}$ and
$\Bb'$ are anti-isomorphic.
\end{remark}

\begin{proposition}\prlabel{7.9}
Let $H$ be a locally finite $k$-linear Hopf category, and let $A$ be a right $H$-comodule category.
We have an isomorphism of clusters
$\alpha:\ \ol{\Bb}=K^{\rm op}\# A\to \Bb'^{\rm op}=(A^{\rm op}\# K^{\rm op})^{\rm op}$
which is such that the diagram
$$\xymatrix{
\ol{\Bb}\ar[d]_{\alpha}\ar[rr]^{\ol{\kappa}}&&\Aa\\
\Bb'^{\rm op}\ar[urr]_{\kappa'^{{\rm op}}}&&}$$
commutes. Consequently the equivalent statements of \thref{7.7} are also equivalent to
\begin{enumerate}
\item[(6)] $A$ is a dual right $K^{\rm op}$-Galois category extension of $B$, that is,
$\ol{\kappa}:\ \ol{\Bb}\to {}_B\End(A)=\Aa$ is an isomorphism of $k$-linear clusters;
\item[(7)] $\ol{\kappa}^{x}_{xy}$ and $\ol{\kappa}^{x}_{yy}$ are bijective, for all $x,y\in X$.
\end{enumerate}
\end{proposition}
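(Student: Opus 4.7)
The natural candidate is
\[
\alpha^x_{yz}\colon K_{xy}\#A_{yz}\to A_{yz}\#K_{yx},\qquad
\alpha^x_{yz}(k\#a)=a\#T_{yx}(k),
\]
where $T_{yx}\colon K_{xy}\to K_{yx}$ is the antipode of the dual Hopf category $K=H^*$. Bijectivity of $\alpha^x_{yz}$ reduces to bijectivity of $T_{yx}$, which I will take for granted (equivalently, $S_{xy}$ bijective); the two-sided inverse is then $\alpha^{-1}(a\#l)=T_{yx}^{-1}(l)\#a$.

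Commutativity of the triangle is immediate: recalling that $\langle T_{yx}(k),h\rangle=\langle k,S_{xy}(h)\rangle$ for $h\in H_{xy}$, for any $a'\in A_{xy}$ we have
\[
(\kappa'^{\rm op})^x_{yz}(\alpha(k\#a))(a')=\langle T_{yx}(k),a'_{[1]}\rangle a'_{[0]}a=\langle k,S_{xy}(a'_{[1]})\rangle a'_{[0]}a=(a'\rightact k)a=\ol{\kappa}^x_{yz}(k\#a)(a').
\]
Unit preservation amounts to $T_{yx}(\varepsilon_{yx})=\varepsilon_{xy}$, i.e.\ to the identity $\varepsilon_{yx}\circ S_{xy}=\varepsilon_{xy}$, which follows from the antipode axioms of $H$ exactly as in the classical Hopf-algebra case.

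The central step is multiplicativity of $\alpha$. For $k\#a\in\ol{\Bb}^x_{yz}$ and $k'\#a'\in\ol{\Bb}^x_{zu}$, unpacking \equref{7.5.6} and applying $\alpha$ gives
\[
\alpha((k\#a)(k'\#a'))=\langle k'_{(2,y,z)},S(a_{[1]})\rangle\,a_{[0]}a'\#T_{yx}(k'_{(1,x,y)}k).
\]
Meanwhile, computing $\alpha(k'\#a')\cdot_{\Bb'}\alpha(k\#a)$ via the $\Bb'$-multiplication \equref{7.5.2}, with the indices $y'=u$, $z'=z$, $u'=y$, yields
\[
\langle T_{zx}(k')_{(1,z,y)},a_{[1]}\rangle\,a_{[0]}a'\#T_{yx}(k)\,T_{zx}(k')_{(2,y,x)}.
\]
The two expressions are reconciled by two antipode identities on $K$. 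First, anti-comultiplicativity
\[
T_{zx}(k')_{(1,z,y)}\otimes T_{zx}(k')_{(2,y,x)}=T_{zy}(k'_{(2,y,z)})\otimes T_{yx}(k'_{(1,x,y)}),
\]
which is obtained by dualizing $S_{xz}(hh')=S_{yz}(h')S_{xy}(h)$ and pairing with arbitrary $h\in H_{xy}$, $h'\in H_{yz}$. Second, anti-multiplicativity $T_{yx}(k'_{(1,x,y)}k)=T_{yx}(k)\,T_{yx}(k'_{(1,x,y)})$, obtained analogously by dualizing the anti-coalgebra property of $S$. Substituting the first identity, using $\langle T_{zy}(k'_{(2,y,z)}),a_{[1]}\rangle=\langle k'_{(2,y,z)},S(a_{[1]})\rangle$, and then the second, produces the required equality.

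For the equivalences, the commutative triangle $\ol{\kappa}^x_{yz}=(\kappa'^{\rm op})^x_{yz}\circ\alpha^x_{yz}$ and the bijectivity of each $\alpha^x_{yz}$ show that $\ol{\kappa}^x_{yz}$ is invertible iff $\kappa'^x_{zy}$ is. Hence $\ol{\kappa}$ is an isomorphism of clusters iff $\kappa'$ is, which by \thref{7.7}\,(2) is equivalent to $A$ being an $H$-Galois category extension, giving (6). Likewise, bijectivity of $\ol{\kappa}^x_{xy}$ and $\ol{\kappa}^x_{yy}$ for all $x,y\in X$ corresponds via $\alpha^x_{xy}$ and $\alpha^x_{yy}$ to bijectivity of $\kappa'^x_{yx}$ and $\kappa'^x_{yy}$, which is \thref{7.7}\,(4), yielding (7). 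The principal obstacle is careful index bookkeeping between the two smash products $\ol{\Bb}$ and $\Bb'^{\rm op}$, together with verifying the categorical anti-multiplicative/anti-comultiplicative antipode identities on $K$ in the exact form required by the computation.
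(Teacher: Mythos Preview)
Your proposal is correct and follows essentially the same route as the paper: the same formula for $\alpha$ (the paper writes $S_{xy}(k)$ by a slight abuse where you write $T_{yx}(k)=S_{xy}^*(k)$), the same verification of the triangle, and the same deduction of (6) and (7) from \thref{7.7}. Two minor remarks: the paper does not leave bijectivity of the antipode as an assumption but invokes \cite[Prop.~10.6]{BCV} to conclude it from local finiteness of $H$; and the paper simply leaves multiplicativity of $\alpha$ to the reader, so your explicit sketch via the anti-(co)multiplicativity of $T$ is more detailed than what the paper records.
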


\begin{proof}
$\alpha$ is defined as follows:
$$
\alpha^x_{yz}:\ \ol{\Bb}^x_{yz}=K_{xy}\#A_{yz}\to \Bb'^{x}_{zy}= A_{yz}\# K_{yx},~~
\alpha^x_{yz}(k\# a)=a\# S_{xy}(k).$$
From the fact that
$H$ is locally finite (see \cite[Prop. 10.6]{BCV}), it follows that the antipode of $H$ is bijective, and this implies that
$\alpha$ is bijective, with inverse given by the formula
$$\bigl(\tilde{\alpha}^x_{yz}\bigr)(a\# k)=S^{-1}_{xy}(k)\# a.$$
It is left to the reader to show that $\alpha$ is multiplicative. 
 and $\tilde{\alpha}$ are bijective. Let us verify that the first triangle commutes. For all
$x,y,z\in X$ the triangle
\begin{equation}\eqlabel{7.5.9}
\xymatrix{\ol{\Bb}^{x}_{yz}\ar[d]_{\alpha^{x}_{yz}}\ar[rr]^{\ol{\gamma}^{x}_{yz}}&&
\Aa^x_{yz}\\
\Bb'^{x}_{zy}\ar[urr]_{\kappa'^{x}_{zy}}&&}
\end{equation}
commutes: for all $k\in K_{xy}$, $a\in A_{yz}$ and $a'\in A_{xy}$, we have that
\begin{eqnarray*}
&&\hspace*{-2cm}
(\kappa'^{x}_{zy}\circ \alpha^{x}_{yz})(k\# a)(a')=
\kappa'^{x}_{zy}\bigl(a\# S_{xy}(k)\bigr)(a')\\
&=&(S_{xy}(k)\cdot a')a=(a'\rightact k)a= \ol{\kappa}^{x}_{yz}(k\# a)(a').
\end{eqnarray*}
\end{proof}

\subsection{The Koppinen smash product revisited}\selabel{7.3}
In \seref{7.1}, we introduced the Koppinen smash product, and gave its relationship to right
faithfully projective descent data. Now we present an alternative version, related to left
faithfully projective descent data. This Koppinen smash product is isomorphic to a smash
product associated to a right $H$-module category in the sense of \seref{6}. We restrict to
giving the main results, the proofs are similar to the proofs presented in Sections \ref{se:7.1} and
\ref{se:7.2}. We assume that $H$ is locally finite semi-Hopf category with associated dual
semi-Hopf category $K$, and that $A$ is a right $H$-comodule category.\\
We have a $k$-linear cluster $\Cc'=\#'(H,A)$, defined componentwise as
$$\Cc'^x_{yz}=\#'(H_{yx},A_{zy}),$$
with the following multiplication: for $g\in \#'(H_{yx},A_{zy})$ and $g'\in \#'(H_{zx},A_uz)$,
$g\#'g'\in \#'(H_{yx},A_uy)$ is given by the formula
\begin{equation}\eqlabel{7.10.1}
(g\#'g')(h)=g'\bigl(g(h_{(2)})_{[1]}h_{(1)}\bigr)g(h_{(2)})_{[0]},
\end{equation}
for $h\in H_{yx}$. Our next observation is that \equref{action2} can be applied to construct a right
$K$-module category: $A^{\rm op}$ is a right $K$-module category, with action given by the formula
\begin{equation}\eqlabel{7.5.3}
a\cdot k=\lan k, a_{[1]}\ran a_{[0]},
\end{equation}
for all $k\in K_{xy}$ and $a\in A_{xy}^{\rm op}=A_{yx}$. The associated smash product
$\Bb= K\# A^{\rm op}$ is described as follows:
$\Bb^{x}_{yz}= K_{xy}\# A_{zy}$, with multiplication 
($a'\in A_{uz}$, $k'\in K_{xz}$)
\begin{equation}\eqlabel{7.5.4}
(k\# a)(k'\# a')= kk'_{(1,x,y)}\# a'(a\cdot k_{(2,y,z)})=\lan k_{(2,y,z)}, a_{[1]}\ran
kk'_{(1,x,y)}\# a'a_{[0]}.
\end{equation}
Now ${}_{B^{\rm op}}\End(A^{\rm op})=\End_B(A)^{\rm op}={\Aa'}^{\rm op}$. Applying \equref{6.1.2}, we have
a morphism of clusters $\kappa:\ \Bb\to \End_B(A)^{\rm op}=\Aa'^{\rm op}$,
$$\kappa^{x}_{yz}:\ \Bb^x_{yz}=K_{xy}\# A_{zy}\to \Aa'^x_{zy}=\Hom_{B_x}(A_{yx},A_{zx}),~~
\kappa^{x}_{yz}(k\# a)(a')=a(a'\cdot k).$$

\begin{theorem}\thlabel{7.11}
Let $H$ be a locally finite $k$-linear semi-Hopf category, and let $A$ be a right $H$-comodule category.
We have an isomorphism of clusters $\beta':\ \Bb\to \Cc'$ and a morphism of clusters $\delta':\ \Cc'\to 
\Aa'^{\rm op}$ 
such that the diagram
$$\xymatrix{\Bb=K\# A^{\rm op}\ar[d]_{\beta'}\ar[rr]^{\kappa}&&
\Aa^{\rm op}=\End_B(A)^{\rm op}\\
\Cc'=\#'(H,A)\ar[rru]_{\delta'}&&}$$
commutes. Also assume that $A$ is locally finite as a right $B$-module. The following assertions are equivalent:
\begin{enumerate}
\item $A$ is an $H$-Galois' category extension of $B=A^{{\rm co}H}$;
\item $A$ is a dual right $K$-Galois category extension of $B$, that is, $\kappa$
is an isomorphism of clusters;
\item $\delta'$ is an isomorphism of clusters;
\item $\kappa^{x}_{yx}$ and $\kappa^{x}_{yy}$ are invertible, for all $x,y\in X$;
\item $\delta'^{x}_{yx}$ and $\delta'^{x}_{yy}$ are invertible, for all $x,y\in X$.
\end{enumerate}
\end{theorem}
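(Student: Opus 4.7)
The plan is to mirror the right-handed arguments of Sections \ref{se:7.1} and \ref{se:7.2} in the left-handed setting. First, I would define the cluster isomorphism componentwise by $\beta'^x_{yz}(k\# a)(h) = \lan k,h\ran a$ for $h\in H_{yx}$, with inverse $\tilde{\beta}'^x_{yz}(g) = \sum_l h_l^* \# g(h_l)$ built from a finite dual basis $\sum_l h_l\ot h_l^*$ of $H_{yx}$ (available by local finiteness of $H$). The units match tautologically, and multiplicativity is a direct computation comparing the multiplication rule \equref{7.5.4} of $\Bb = K\#A^{\rm op}$ with the rule \equref{7.10.1} of $\Cc' = \#'(H,A)$ through the duality pairing. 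Next, I would define $\delta'^x_{yz}(g)(a) = g(a_{[1]})a_{[0]}$ for $a\in A_{yx}$; it is left $B_x$-linear in the sense required by $\Aa'^x_{zy}$, and it preserves composition by an unwinding identical in spirit to \prref{7.1}. Commutativity of the triangle is immediate:
$$\delta'^x_{yz}(\beta'^x_{yz}(k\# a))(a') = \lan k,a'_{[1]}\ran a\, a'_{[0]} = a(a'\cdot k) = \kappa^{x}_{yz}(k\# a)(a').$$

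From the commuting triangle and the fact that $\beta'$ is a componentwise isomorphism, the equivalences $(2)\Leftrightarrow(3)$ and $(4)\Leftrightarrow(5)$ are automatic. The implications $(2)\Rightarrow(4)$ and $(3)\Rightarrow(5)$ are trivial. Thus the substantive work reduces to closing the cycle via $(1)\Rightarrow(3)$ and $(5)\Rightarrow(1)$.

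For $(1)\Rightarrow(3)$, I would mimic \prref{7.2}. Given the maps $\gamma'_{yx}(h) = \sum_i l'_i(h)\ot_{B_x} r'_i(h)$ from \thref{3.12}, set
$$\tilde{\delta}'^x_{yz}(\varphi)(h) = \sum_i \varphi(l'_i(h))\,r'_i(h)\,,$$
which is well-defined over $B_x$ by right $B_x$-linearity of $\varphi$. The identity $\delta'\circ\tilde{\delta}' = \id$ then follows from \equref{3.11.2} by applying $\varphi$ to the first tensor factor and multiplying out in $A_{zx}$, while $\tilde{\delta}'\circ\delta' = \id$ follows from \equref{3.11.1} after applying $g\ot A$ and multiplying.

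The harder direction is $(5)\Rightarrow(1)$, which is the parallel of \prref{7.3} and is where the finiteness hypothesis enters. Pick a finite dual basis $\sum_j a_j^* \ot_{B_x} a_j$ of the right $B_x$-module $A_{yx}$, and define
$$\gamma'_{yx}(h) = \sum_j a_j \ot_{B_x} \tilde{\delta}'^{x}_{yx}(i_x\circ a_j^*)(h)\,.$$
Equation \equref{3.11.2} drops out immediately from $\delta'^x_{yx}\circ\tilde{\delta}'^x_{yx}=\id$ together with the dual basis property. The main obstacle is \equref{3.11.1}: here I would use that $\delta'^x_{yy}$ is an iso to express the right $H$-coaction via $\rho(a_j) = \sum_l \delta'^{x}_{yy}(\eta_y\circ h_l^*)(a_j)\ot h_l$ using the dual basis of $H_{yx}$, and combine this with a left $A$-linearity property of $\tilde{\delta}'$ (analogous to \equref{7.3.4}) to reduce $\sum_i l'_i(h)_{[0]}r'_i(h)\ot l'_i(h)_{[1]}$ to $\sum_l \tilde{\delta}'^x_{yy}(\delta'^x_{yy}(\eta_y\circ h_l^*))(h)\ot h_l = 1_y\ot h$. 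Once this parallel of the computation at the end of \prref{7.3} goes through, we obtain $A$ is $H$-Galois' over $B$, completing the cycle.
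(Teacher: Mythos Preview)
Your proposal is correct and follows essentially the same route as the paper's own proof: the same formulas for $\beta'$ and $\delta'$, the same inverse $\tilde{\delta}'^x_{yz}(\varphi)(h)=\sum_i \varphi(l'_i(h))r'_i(h)$ for $(1)\Rightarrow(3)$, and the same construction of $\gamma'_{yx}$ from a right $B_x$-dual basis of $A_{yx}$ together with the left $A$-linearity of $(\delta')^{-1}$ and the dual-basis description of $\rho$ for $(5)\Rightarrow(1)$. One small slip: the maps in $\Aa'^x_{zy}=\Hom_{B_x}(A_{yx},A_{zx})$ are \emph{right} $B_x$-linear, not left.
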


\begin{proof} (sketch)
$\delta'^x_{yz}:\ \Cc'^x_{yz}=\#'(H_{yx},A_{zy})\to \Aa'^x_{zy}=\Hom_{B_x}(A_{yx},A_{zx})$ is given by
the formula
$$\delta'^x_{yz}(g)(a')=g(a'_{[1]})a'_{[0]},$$
for all $a'\in A_{yx}$. $\beta'^x_{yz}:\ K_{xy}\# A_{zy}\to \#'(H_{yx},A_{zy})$ is given by the formula
$\beta'^x_{yz}(k\# a)(h)=\lan k,h\ran a$.\\
$\ul{(1)\Rightarrow (3)}$. It follows from \thref{3.12} that there exists $\gamma'_{zx}:\ H_{zx}\to A_{zx}\ot_{B_x} A_{xz}$
satisfying (\ref{eq:3.11.1}-\ref{eq:3.11.2}). The inverse of $\delta'^x_{yz}$ is given by the formula
(with notation as in \thref{3.12}):
$\bigl(\delta'^x_{yz}\bigr)^{-1}(\varphi)(h)=\sum_i \varphi(l'_i(h)) r'_i(h)$.
$\ul{(5)\Rightarrow (1)}$. Let
$\sum_j a_j\ot_{B_x} a^*_j\in A_{zx}\ot_{B_x} \Hom_{B_x}(A_{zx},B_x)$
be the finite dual basis of $A_{zx}$ as a right $B_x$-module. Observe that $i_x\circ a^*_j\in \Hom_{B_x}(A_{zx},A_{xx})$,
and define $\gamma'_{zx}$ as follows:
$\gamma'_{zx}(h)= \sum_j a_j\ot_{B_x} \bigl(\delta'^x_{zx}\bigr)^{-1}(i_x\circ a_j^*)(h)$,
for $h\in H_{zx}$. The proof of \equref{3.11.2} is straightforward. \equref{3.11.1} is more tricky, and depends
on the assumption that $\delta'^x_{zz}$ is invertible. We sketch the details.
For $a'\in A_{uz}$, $g\in \#'(H_{yx}, A_{zy})$ and $\varphi\in \Hom_{B_x}(A_{yx}, A_{zy})$, we define
$a'\cdot g\in \#'(H_{yx}, A_{uy})$ and $a'\cdot \varphi\in \Hom_{B_x}(A_{yx}, A_{uy})$ by left multiplication:
$(a'\cdot g)(h)=a' g(h)$ and $(a'\varphi)(a)=a'\varphi(a)$. Obviously $\delta'$ is left $A$-linear, in the sense that
$\delta'^x_{yu}(a'\cdot g)=a'\cdot \delta'^x_{yz}(g)$. This implies that
\begin{equation}\eqlabel{7.11.1}
\bigl(\delta'^x_{zz}\bigr)^{-1}(a'\cdot \varphi)=a'\cdot \bigl(\delta'^x_{zx}\bigr)^{-1}(\varphi),
\end{equation}
for $\varphi\in \Hom_{B_x}(A_{zx},A_{xx})$ and $a'\in A_{zx}$. Let $\sum_j h^*_l\ot h_l$ be the finite dual basis of $H_{zx}$.
Then we have for all $a\in A_{zx}$ that 
\begin{equation}\eqlabel{7.11.2}
\rho_{zx}(a)=\sum_l \delta'^x_{zz}(\eta_z\circ h^*_l)(a)\ot h_l=\sum_l a^l\ot h_l.
\end{equation}
With this notation, we can show that
\begin{equation}\eqlabel{7.11.3}
\sum_j a_j^l\cdot (i_x\circ a^*_j)=\delta'^x_{zz}(\eta_z\circ h^*_l).
\end{equation}
Finally
\begin{eqnarray*}
&&\hspace*{-2cm}
\sum_i l'_i(h)_{[0]}r'_i(h)\ot l'_i(h)_{[1]}
= \sum_j a_{j[0]} \bigl(\delta'^x_{zx}\bigr)^{-1}(i_x\circ a_j^*)(h)\ot a_{j[1]}\\
&\equal{\equref{7.11.2}}&
\sum_{j,l}  \Bigl(a_j^l \cdot \bigl(\bigl(\delta'^x_{zx}\bigr)^{-1}(i_x\circ a_j^*)\bigr)\Bigr)(h)\ot h_l\\
&\equal{\equref{7.11.1}}&
\sum_{j,l} \Bigr(\bigl(\delta'^x_{zx}\bigr)^{-1}(a_j^l\cdot (i_x\circ a^*_j))\Bigr)(h)\ot h_l\\
&\equal{\equref{7.11.3}}&
\sum_{l} \Bigr(\bigl(\delta'^x_{zz}\bigr)^{-1}(\delta'^x_{zz}(\eta_z\circ h^*_l))\Bigr)(h)\ot h_l\\
&=& \sum_l \lan h^*_l,h\ran 1_z\ot h_l=1_z\ot h,
\end{eqnarray*}
proving \equref{3.11.1}.
\end{proof}

\begin{remark}\relabel{7.12}
If $H$ is a Hopf category, then $A$ is a left $K$-module category, with action
$$k\leftact a=\lan k, S_{xy}(a_{[1]})\ran a_{[0]},$$
for $k\in K_{xy}$ and $a\in A_{xy}$. Proceeding as in \reref{7.8}, we can add two more
equivalent conditions ito \thref{7.11}. Moreover, the conditions in Theorems \ref{th:7.7}
and \ref{th:7.11} are equivalent, by \thref{3.14}.
\end{remark}

\section{Hopf-Galois extensions and groupoid graded algebras}\selabel{8}
Let $G$ be a groupoid, with underlying class of objects $X$. The unit element of $G_{xx}$ is
denoted as $e_x$. Then $kG$ is a $k$-linear Hopf category, see \cite[Ex. 3.4]{BCV}.
In the situation where $X$ is a set, we can consider the groupoid algebra $kG$, which is the
Hopf category $kG$ in packed form: $kG=\oplus_{x,y\in X} kG_{xy}$, with multiplication
extended linearly from the composition in $G$, where we put $\tau\sigma=0$ if $\tau$ and
$\sigma$ cannot be composed. If $X$ is finite, then $kG$ has the unit $\sum_x e_x$.\\
A $G$-grading on $M\in \Mm_k(X)$ consists of a direct sum decomposition
$$M_{xy}=\oplus_{\sigma\in G_{xy}} M_\sigma,$$
for all $x,y\in X$. If $m\in M_\sigma$, then $m$ is said to be homogeneous of degree $\sigma$,
written as ${\rm deg}(m)=\sigma$. $\Mm_k(X)^G$ is the category of $G$-graded objects of
$\Mm_k(X)$. Its morphisms are degree preserving morphisms in $\Mm_k(X)$.\\
A $G$-graded $k$-linear category is a $k$-linear category $A$ with a $G$-grading such that
$1_x\in A_{e_x}$ and $A_{\sigma}A_{\tau}\subset A_{\sigma\tau}$, for all $x,y,z\in X$
and $\sigma\in G_{xy}$ and $\tau\in G_{yz}$. If $A_{\sigma}A_{\tau}= A_{\sigma\tau}$
for all $\sigma$ and $\tau$, then $A$ is called a strongly $G$-graded $k$-linear category.
If $X$ is a (finite) set, then these definitions can be restated in packed form, and we recover
definitions from \cite{L}, where a structure theorem for strongly graded algebras over a
groupoid is presented.\\
A $G$-graded right $A$-module is an object $M\in \Mm_k(X)^G$ with a right $A$-action such that
$M_{\sigma}A_{\tau}\subset M_{\sigma\tau}$, for all $x,y,z\in X$
and $\sigma\in G_{xy}$ and $\tau\in G_{yz}$. $\Mm_k(X)^G_A$ is the category of 
$G$-graded right $A$-modules.

\begin{proposition}\prlabel{8.1}
For a groupoid $G$, the categories $\Mm_k(X)^{G}$ and $\Mm_k(X)^{kG}$ are isomorphic.
$kG$-comodule category structures on a $k$-linear category $A$ correspond bijectively to
$G$-gradings on $A$, and, in this situation, the categories 
$\Mm_k(X)^{G}_A$ and $\Mm_k(X)^{kG}_A$ are isomorphic.
\end{proposition}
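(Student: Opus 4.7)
The strategy is to apply componentwise the standard dictionary between gradings by a set and comodule structures over the corresponding grouplike coalgebra. Concretely, for each pair $x,y\in X$ the $k$-module $kG_{xy}$ is a coalgebra in which every $\sigma\in G_{xy}$ is grouplike: $\Delta_{xy}(\sigma)=\sigma\ot\sigma$ and $\varepsilon_{xy}(\sigma)=1$. This is the coalgebra structure underlying the Hopf category $kG$ recalled from \cite{BCV}.

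First I would build the correspondence between graded and comodule structures on a single $M\in \Mm_k(X)$. Given a $G$-grading $M_{xy}=\oplus_{\sigma\in G_{xy}}M_\sigma$, define $\rho_{xy}:M_{xy}\to M_{xy}\ot kG_{xy}$ by $\rho_{xy}(m)=m\ot \sigma$ for $m\in M_\sigma$, extended linearly. Coassociativity and counitality follow from $\sigma$ being grouplike. Conversely, given a $kG$-comodule structure $\rho_{xy}$, set $M_\sigma:=\{m\in M_{xy}\mid\rho_{xy}(m)=m\ot \sigma\}$; one recovers the decomposition via the projectors $p_\sigma=(M_{xy}\ot \sigma^*)\circ \rho_{xy}$, where $\sigma^*\in (kG_{xy})^*$ is the dual basis element. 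Coassociativity yields $p_\sigma\circ p_\tau=\delta_{\sigma\tau}\,p_\sigma$ and counitality yields $\sum_\sigma p_\sigma=\id_{M_{xy}}$, so $M_{xy}=\oplus_\sigma M_\sigma$. These procedures are visibly mutually inverse, and a morphism $\varphi:M\to N$ in $\Mm_k(X)$ preserves degrees componentwise if and only if each $\varphi_{xy}$ is $kG_{xy}$-colinear, giving the isomorphism $\Mm_k(X)^G\cong \Mm_k(X)^{kG}$.

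Next I would superimpose the category structure. For a $G$-graded $k$-linear category $A$, the requirements $A_\sigma A_\tau\subseteq A_{\sigma\tau}$ and $1_x\in A_{e_x}$ translate directly into the compatibility \equref{3.1.1}: for $a\in A_\sigma$, $b\in A_\tau$ we have $\rho_{xz}(ab)=ab\ot \sigma\tau$, which equals $a_{[0]}b_{[0]}\ot a_{[1]}b_{[1]}=ab\ot \sigma\tau$, and $\rho_{xx}(1_x)=1_x\ot e_x=1_x^A\ot 1_x^{kG}$. Conversely, starting from a $kG$-comodule category structure, the induced grading satisfies the required containments by reading the same identities backwards on homogeneous elements. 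The relative Hopf module condition \equref{3.1.2} translates in identical fashion into $M_\sigma A_\tau\subseteq M_{\sigma\tau}$, giving the final isomorphism $\Mm_k(X)^G_A\cong \Mm_k(X)^{kG}_A$; morphisms on both sides are the right $A$-linear degree-preserving families in $\Mm_k(X)$.

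The argument is mostly routine verification; no step presents a genuine obstacle. The only point one must be careful about is that the two constructions relating $G_{xy}$-gradings and $kG_{xy}$-coactions are inverse at the level of each component $M_{xy}$, which rests on the fact that $G_{xy}$ is a $k$-linearly independent set of grouplike elements of $kG_{xy}$ and hence the projectors $p_\sigma$ defined above are well defined and give a direct sum decomposition.
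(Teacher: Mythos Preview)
Your proposal is correct and follows essentially the same approach as the paper. The only cosmetic difference is that the paper extracts the homogeneous components $m_\sigma$ by writing $\rho_{xy}(m)=\sum_\sigma m_\sigma\ot\sigma$ via the direct sum decomposition $M_{xy}\ot kG_{xy}=\oplus_\sigma M_{xy}\ot k\sigma$, whereas you phrase the same projection using dual basis elements $\sigma^*$; be mindful that the identity $\sum_\sigma p_\sigma=\id$ should be read pointwise (for each $m$ only finitely many $p_\sigma(m)$ are nonzero), since $G_{xy}$ need not be finite.
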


\begin{proof}
For $M\in \Mm_k(X)^{G}$, the maps $\rho_{xy}:\ M_{xy}\to M_{xy}\ot kG_{xy}$
given by the formula $\rho_{xy}(m)=m\ot\sigma$ if ${\rm deg}(m)=\sigma$, extended linearly,
define a right $kG$-coaction on $M$.\\
For  $M\in \Mm_k(X)^{kG}$ and $\sigma\in G_{xy}$, define
$$M_\sigma=\{m\in M_{xy}~|~\rho_{xy}(m)=m\ot\sigma\}.$$
It is clear that $M_\sigma\cap M_\tau=\{0\}$ if $\sigma\neq\tau\in G_{xy}$.
Since $\rho_{xy}:\ M_{xy}\to M_{xy}\ot k G_{xy}=\oplus_{\sigma\in G_{xy}}M_{xy}\ot\sigma$, we can write
$$\rho_{xy}(m)=\sum_{\sigma\in G_{xy}} m_\sigma\ot\sigma,$$
with $m_\sigma\in M_{xy}$. Applying $M_{xy}\ot \varepsilon_{xy}$ to both sides, we see that
$$m= \sum_{\sigma\in G_{xy}} m_\sigma.$$
The coassociativity of $\rho$ entails that
$$\sum_{\sigma\in G_{xy}}\rho_{xy}(m_\sigma)\ot \sigma=\sum_{\sigma\in G_{xy}} m_\sigma \ot \sigma
\ot \sigma \in \oplus_{\sigma\in G_{xy}} M_{xy}\ot kG_{xy}\ot k\sigma.$$
Fixing $\tau\in G_{xy}$, and taking the projection of both sides onto the component $M_{xy}\ot kG_{xy}\ot k\tau$,
we find that $\rho_{xy}(m_\tau)=m_\tau\ot \tau$, and $m_\tau\in M_\tau$, for all $\tau\in G_{xy}$. This proves
that $m= \sum_{\sigma\in G_{xy}} m_\sigma\in \oplus_{\sigma\in G_{xy}} M_\sigma$.\\
The proof of other assertions is similar and is left to the reader.
\end{proof}

\begin{theorem}\thlabel{8.2}
Let $G$ be a groupoid, and
let $A$ be a $G$-graded $k$-linear category. Let $B=A^{{\rm co}kG}$ be the diagonal algebra with $B_x=A_{e_x}$.
The following statements are equivalent.
\begin{enumerate}
\item[(1)] $A$ is strongly graded;
\item[(2)] $A_{\sigma^{-1}}A_{\sigma}=B_y$, for all $x,y\in X$ and $\sigma\in G_{xy}$;
\item[(3)] The adjunction $(F,G)$ from \prref{3.2} is a pair of inverse equivalences;
\item[(4)] $A$ is a $kG$-Galois category extension of $B=A^{{\rm co}kG}$, in the sense of \thref{3.5}.
\end{enumerate}
\end{theorem}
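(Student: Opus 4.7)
The plan is to prove the cycle $(1)\Leftrightarrow(2)$, $(1)\Leftrightarrow(4)$, $(1)\Rightarrow(3)$, and $(3)\Rightarrow(4)$, so that all four conditions are mutually equivalent. The key observation driving everything is that, in the groupoid-graded setting, the canonical map $\can^z_{xy}$ can be analysed degree by degree because $kG_{xy}$ has a $k$-basis indexed by $G_{xy}$ and each $A_{xy}$ splits as $\bigoplus_{\sigma\in G_{xy}} A_\sigma$.

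For $(1)\Leftrightarrow(2)$: the forward direction is immediate since $\sigma^{-1}\sigma=e_y$. For the converse, $(2)$ applied to $\sigma^{-1}\in G_{yx}$ gives $A_\sigma A_{\sigma^{-1}}=B_x$, so write $1_x=\sum_i a_ib_i$ with $a_i\in A_\sigma$, $b_i\in A_{\sigma^{-1}}$. Then any $c\in A_{\sigma\tau}$ satisfies $c=1_xc=\sum_i a_i(b_ic)$ with $b_ic\in A_{\sigma^{-1}\sigma\tau}=A_\tau$, proving the reverse inclusion $A_{\sigma\tau}\subset A_\sigma A_\tau$.

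For $(1)\Rightarrow(4)$: for each $\sigma\in G_{xy}$, use $(2)$ to fix a finite expression $1_y=\sum_i c_i^\sigma d_i^\sigma$ with $c_i^\sigma\in A_{\sigma^{-1}}$, $d_i^\sigma\in A_\sigma$, and define $\gamma_{xy}:kG_{xy}\to A_{yx}\otimes_{B_x}A_{xy}$ by $k$-linear extension of $\gamma_{xy}(\sigma)=\sum_i c_i^\sigma\otimes_{B_x} d_i^\sigma$. Then \equref{3.5.1} is trivial on the basis. For \equref{3.5.2}, it suffices to check on homogeneous $a\in A_\sigma$: the element $ac_i^\sigma$ lies in $A_\sigma A_{\sigma^{-1}}\subset A_{e_x}=B_x$, so it can be pulled across the tensor, and what remains is $1_x\otimes_{B_x} a\sum_i c_i^\sigma d_i^\sigma=1_x\otimes_{B_x}a$. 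For $(4)\Rightarrow(2)$: apply \equref{3.5.1} to $\sigma\in G_{xy}$, decompose both $l_i(\sigma)=\sum_\mu l_i^\mu\in\bigoplus_{\mu\in G_{yx}} A_\mu$ and $r_i(\sigma)=\sum_\tau r_i^\tau$, and project the resulting identity onto the $A_{yy}\otimes k\sigma$-component of $A_{yy}\otimes kG_{xy}$. Only the pairs with $\mu\tau=e_y$ and $\tau=\sigma$ survive, yielding $\sum_i l_i^{\sigma^{-1}}r_i^\sigma=1_y$, so $1_y\in A_{\sigma^{-1}}A_\sigma$; since $A_{\sigma^{-1}}A_\sigma$ is stable under left multiplication by $B_y$, we conclude $B_y=A_{\sigma^{-1}}A_\sigma$.

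The implication $(3)\Rightarrow(4)$ is immediate from \prref{3.3}(2): if $(F,G)$ is a pair of inverse equivalences then $G$ is fully faithful, hence each $\can^z_{xy}$ is bijective. The main work is $(1)\Rightarrow(3)$, which reduces, via \thref{3.9} and \thref{2.5}, to verifying the hypothesis that $A$ is locally flat as a left $B$-module and locally faithfully flat at the diagonal. The trick is to use the expression $1_y=\sum_j c_j d_j$ with $c_j\in A_{\sigma^{-1}}$, $d_j\in A_\sigma$ (finitely many $j$'s) to produce inverse maps
\[
A_\sigma\rightleftarrows B_x^n,\qquad a\mapsto (ac_j)_j,\quad (e_j)_j\mapsto \sum_j e_j d_j,
\]
noting that $ac_j\in A_\sigma A_{\sigma^{-1}}=B_x$; the composite is $a\mapsto a\cdot 1_y=a$. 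Hence each $A_\sigma$ is a finitely generated projective left $B_x$-module, so $A_{xy}=\bigoplus_{\sigma\in G_{xy}} A_\sigma$ is locally flat. Faithful flatness at $x=y$ follows because $A_{xx}$ contains $B_x=A_{e_x}$ as a graded direct summand. Together with $(4)$, already obtained from $(1)$, \thref{3.9} identifies $\Mm_k(X)^{kG}_A$ with $\Desc_B(A)$, and \thref{2.5} then promotes the adjunction of \prref{3.2} to a pair of inverse equivalences. The main obstacle is this last verification of local faithful flatness under the strongly graded hypothesis; the rest of the cycle is formal once the degree-decomposition trick is in place.
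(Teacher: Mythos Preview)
Your argument is correct, but you take a different path from the paper for the implication leading to $(3)$. The paper proves $(2)\Rightarrow(3)$ directly and elementarily: $\eta^N_x$ is trivially invertible since $F(N)_{e_x}=N_x\otimes_{B_x}A_{e_x}=N_x\otimes_{B_x}B_x$, and for $\varepsilon^M_{xy}$ an explicit inverse is built on homogeneous elements using a decomposition $1_y=\sum_i a_ia'_i$ with $a_i\in A_{\sigma^{-1}}$, $a'_i\in A_\sigma$, sending $m\in M_\sigma$ to $\sum_i ma_i\otimes_{B_x}a'_i$; injectivity is checked by a short graded computation. No appeal is made to the general descent machinery of \seref{2} or to \thref{3.9}.

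Your route---showing that each $A_\sigma$ is a direct summand of $B_x^n$ via the maps $a\mapsto(ac_j)_j$ and $(e_j)_j\mapsto\sum_j e_jd_j$, hence $A$ is locally flat and faithfully flat on the diagonal, and then invoking \thref{3.9} together with \thref{2.5}---is a legitimate alternative that has the pleasant side effect of exhibiting the structural fact that a strongly graded category is automatically locally flat over its degree-$e$ part. One point you should make explicit, however: \thref{2.5} concerns the adjunction of \prref{2.2}, not that of \prref{3.2}, so you must observe that the isomorphism $P$ of \thref{3.9} intertwines the two pairs of adjoint functors. This is immediate once noted (both left adjoints send $N$ to $N_x\otimes_{B_x}A_{xy}$, and $P$ is the identity on underlying $A$-modules), but as written your last sentence elides it. With that remark added, the proof stands.
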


\begin{proof}
$\ul{(1)\Rightarrow (2)}$ is obvious.\\
$\ul{(2)\Rightarrow (3)}$. Take $N\in \Dd_k(X)_B$ and $M\in \Mm_k(X)_A^G$. Recall from \prref{3.2} that
$F(N)_{xy}=N_x\ot_{B_x} A_{xy}$ and $G(M)_x=M^{{\rm co}kG_{xx}}_xx=M_{e_x}$. It is obvious that
$$\eta^N_x:\ N_x\to F(N)_{e_x}= N_x\ot_{B_x}A_{e_x},~~\eta^N_x(n)=n\ot_{B_x}1_x$$
is bijective, for all $x\in X$. We are done if we can show that
$$\varepsilon^M_{xy}:\ M_{e_x}\ot_{B_x}A_{xy}\to M_{xy},~~\varepsilon^M_{xy}(m\ot_{B_x} a)=ma$$
is bijective. Take $\sigma\in G_{xy}$ and $m\in M_{\sigma}\subset M_{xy}$. (2) implies that there exist
$a_i\in A_{\sigma^{-1}}\subset A_{yx}$ and $a_i'\in A_{\sigma}\subset A_{xy}$ such that $\sum_i a_ia'_i=1_y$.
Then $ma_i\in M_{\sigma}A_{\sigma^{-1}}\subset M_{e_x}$, $\sum_i ma_i\ot_{B_x} a'_i\in M_{e_x}\ot_{B_x} A_{xy}$,
and $\varepsilon^M_{xy}(\sum_i ma_i\ot_{B_x} a'_i)=m$. This proves that $\varepsilon^M_{xy}$ is surjective.\\
Finally take $\omega=\sum_j m_j\ot_{B_x} c_j\in \Ker(\varepsilon^M_{xy})\subset M_{e_x}\ot_{B_x} A_{xy}$. For
each $j$, we have that
$$c_j=\sum_{\sigma\in G_{xy}}c_{j\sigma}\in \oplus_{\sigma\in G_{xy}}A_\sigma.$$
For all $\sigma\in G_{xy}$, we find that $0=\varepsilon^M_{xy}(\omega)_\sigma=\sum_j m_jc_{j\sigma}$. Using the fact
that $c_{j\sigma}a_i\in A_\sigma A_{\sigma^{-1}}=B_x$, we find that
$$\omega_\sigma=\sum_j m_j\ot_{B_x} c_{j\sigma}=\sum_{i,j} m_j\ot_{B_x} c_{j\sigma}a_ia'_i
= \sum_{i,j} m_j\ c_{j\sigma}a_i\ot_{B_x}a'_i=0.$$
It follows that $\omega=0$, and this shows that $\varepsilon^M_{xy}$ is injective.\\
$\ul{(3)\Rightarrow (4)}$ follows from \prref{3.3}(2).\\
$\ul{(4)\Rightarrow (1)}$. It follows from \thref{3.5}(3) that there exist maps $\gamma_{xy}:\ G_{xy}\to A_{yx}\ot_{B_x} A_{xy}$
satisfying (\ref{eq:3.5.1}-\ref{eq:3.5.2}). Take $\sigma\in G_{xy}$. \equref{3.5.1} can be restated as
$$\sum_i\sum_{\tau\in G_{xy}} l_i(\sigma)r_i(\sigma)_\tau \ot \tau=1_y\ot \tau\in \oplus_{\tau\in G_{xy}} A_{yy}\ot k\tau.$$
Taking the projection of both sides to the component $A_{yy}\ot k\sigma$, it follows that
$$\sum_i l_i(\sigma)r_i(\sigma)_\sigma\ot\sigma = 1_y\ot \sigma,$$
$$1_y= \sum_i l_i(\sigma)r_i(\sigma)_\sigma= \sum_i \sum_{\tau\in G_{yx}}l_i(\sigma)_{\tau}r_i(\sigma)_\sigma
\in A_{yy}=\oplus_{\rho\in G_{yy}}A_\rho.$$
Taking the homogeneous components of degree $e_y$ of both sides, we find that
$$\sum_i l_i(\sigma)_{\sigma^{-1}}r_i(\sigma)_\sigma=1_y\in A_{\sigma^{-1}}A_{\sigma}.$$
This proves that $A_{\sigma^{-1}}A_{\sigma}=B_y$. Finally take $\tau\in G_{zx}$ and
$a\in A_{\tau\sigma}\subset A_{zy}$. Then
$$a=a1_y=\sum_i al_i(\sigma)_{\sigma^{-1}}r_i(\sigma)_\sigma\in A_{\tau\sigma\sigma^{-1}}A_\sigma=A_{\tau}A_\sigma.$$
Thus $A_{\tau\sigma}=A_\tau A_\sigma$ and $A$ is strongly graded.
\end{proof}

\begin{remark}\relabel{8.3}
Galois theory for finite groups acting on commutative extensions was introduced in \cite{AG}, see also
\cite{DI,KO} for an elegant presentation.
It was already observed by Chase and Sweedler \cite{CS} that these Galois extensions appear as Hopf-Galois
extensions over the Hopf algebra $H=(kG)^*$, the dual of the group algebra $kG$. One may also consider
Hopf-Galois extensions over the group algebra $kG$ itself, and these are precisely strongly graded algebras,
an observation that was first made by Ulbrich in \cite{U}. \thref{8.2} is the proper generalization of Ulbrich's
result. What is currently missing is a clear link to the classical theory, involving actions by groupoids, which would
make the picture complete. It is true that we have a theory involving actions, see Sections \ref{se:6} and \ref{se:7},
but this does not bring us what we would expect, since it involves actions by {\bf dual} $k$-linear categories,
while groupoids are ordinary $k$-linear categories. However, a Galois theory for groupoids acting (even partially)
on algebras was developed recently in \cite{BP,PT}. The connection to our theory seems unclear, our plan is to
investigate this in the future.
\end{remark}

\end{document}